\begin{document}
\begin{CJK}{UTF8}{gbsn}

\theoremstyle{plain}
\newtheorem{theorem}{Theorem}[section] \newtheorem*{theorem*}{Theorem}
\newtheorem{proposition}[theorem]{Proposition} \newtheorem*{proposition*}{Proposition}
\newtheorem{lemma}[theorem]{Lemma} \newtheorem*{lemma*}{Lemma}
\newtheorem{corollary}[theorem]{Corollary} \newtheorem*{corollary*}{Corollary}

\theoremstyle{definition}
\newtheorem{definition}[theorem]{Definition} \newtheorem*{definition*}{Definition}
\newtheorem{example}[theorem]{Example} \newtheorem*{example*}{Example}
\newtheorem{remark}[theorem]{Remark} \newtheorem*{remark*}{Remark}
\newtheorem{problem}[theorem]{Problem} \newtheorem*{problem*}{Problem}
\newtheorem{hypotheses}[theorem]{Hypotheses} \newtheorem{assumption}[theorem]{Assumption}
\newtheorem{notation}[theorem]{Notation} \newtheorem*{question}{Question}

\newcommand{\ds}{\displaystyle} \newcommand{\nl}{\newline}
\newcommand{\eps}{\varepsilon}
\newcommand{\bE}{\mathbb{E}}
\newcommand{\cB}{\mathcal{B}}
\newcommand{\cF}{\mathcal{F}}
\newcommand{\cA}{\mathcal{A}}
\newcommand{\cM}{\mathcal{M}}
\newcommand{\cD}{\mathcal{D}}
\newcommand{\cH}{\mathcal{H}}
\newcommand{\cN}{\mathcal{N}}
\newcommand{\cL}{\mathcal{L}}
\newcommand{\cLN}{\mathcal{LN}}
\newcommand{\bP}{\mathbb{P}}
\newcommand{\bQ}{\mathbb{Q}}
\newcommand{\bN}{\mathbb{N}}
\newcommand{\bR}{\mathbb{R}}
\newcommand{\barsigma}{\overline{\sigma}}
\newcommand{\VIX}{\mbox{VIX}}
\newcommand{\erf}{\mbox{erf}}
\newcommand{\LMMR}{\mbox{LMMR}}
\newcommand{\cLcir}{\mathcal{L}_{\!_{C\!I\!R}}}
\newcommand{\rhor}{\raisebox{1.5pt}{$\rho$}}
\newcommand{\varphir}{\raisebox{1.5pt}{$\varphi$}}
\newcommand{\taur}{\raisebox{1pt}{$\tau$}}
\newcommand{\spx}{S\&P 500 }

\title{Malliavin calculus and its application to robust optimal portfolio for an insider}

\author{Chao Yu \and Yuhan Cheng}

\maketitle

\begin{abstract}
Insider information and model uncertainty are two unavoidable problems for the portfolio selection theory in reality. This paper studies the robust optimal portfolio strategy for an investor who owns general insider information under model uncertainty. On the aspect of the mathematical theory, we improve some properties of the forward integral and use Malliavin calculus to derive the anticipating It\^{o} formula  . Then we use forward integrals to formulate the insider-trading problem with model uncertainty. We give the half characterization of the robust optimal portfolio and obtain the semimartingale decomposition of the driving noise $W$ with respect to the insider information filtration, which turns the problem turns to the nonanticipative stochastic differential game problem. We give the total characterization by the stochastic maximum principle. When considering two typical situations where the insider is `small' and `large', we give the corresponding BSDEs to characterize the robust optimal portfolio strategy, and derive the closed form of the portfolio and the value function in the case of the small insider by the Donsker $\delta$ functional. We present the simulation result and give the economic analysis of optimal strategies under different situations.
\end{abstract}

\section{Introduction}

\label{intro}
The optimal portfolio strategy problem is an important subject in mathematical finance, which was first introduced by \cite{Merton69,Merton71}. The main purpose is to select a portfolio strategy $\pi$, i.e., the proportion of the investor's total wealth $X^\pi$ invested in the risky asset $S$ (given that there is a risk-free asset and a risky asset in the financial market), to maximize her terminal expected utility as follows
\begin{equation}\label{intro_equ1}
\begin{aligned}
 \max_{\pi}\mathbb{E}\left[ U(X^\pi_T)  \right].
\end{aligned}\end{equation}
where $U(x)$ is the utility function, and $T>0$ is some fixed terminal time. In continuous time market, the martingale method, the dynamic programming method (with Hamilton-Jacobi-Bellman (HJB) equations) and the stochastic maximum principle (with backwards stochastic differential equations (BSDEs)) are three traditional methods, based on the classical It\^{o} theory (see \cite{Karatzas91}), to solve this problem (see \cite{Merton69,Merton71,Yong99}).  Many extended models and problems have been studied in recent decades (see \cite{Karatzas98,Oksendal19,Gu20}).

Recently, there is a growing emphasis on the optimization problem of insider trading. That is, the investor who owns additional insider information. In this setting, we naturally assume that the portfolio process $\pi$ is adapted to the insider information filtration $\{{\cal H}_t\}_{0\le t\le T}$ , which is larger than the original filtration $\{{\cal F}_t\}_{0\le t\le T}$ generated by the Brownian motion $W$ (the driving noise of the risky asset $S$). Thus the relevant It\^{o} stochastic differential equations (SDEs) in the model should be replaced by the anticipating stochastic differential equations, which implies that the above traditional methods could not be applied directly. 

\cite{Karatzas96} was the first to study the optimization problem of insider trading. They assumed the insider information is hidden in a random variable $Y$ from the beginning of filtrations. Thus the insider information filtration is of the initial enlargement type, i.e., 
\begin{equation}\label{initialtype}
{\cal H}_t=\bigcap_{s>t}\left({\cal F}_s\vee Y\right).
\end{equation}
They used the technique of enlargement of filtration (see \cite{DiNunno09}), which implies that $W$ is a semimartingale with respect to the enlarged filtration $\{{\cal H}_t\}_{0\le t\le T}$ under some specific conditions. However, when these conditions are removed or the filtration is of other forms, $W$ is not necessarily a semimartingale with respect to $\{{\cal H}_t\}_{0\le t\le T}$. \cite{Biagini05} first developed a method based on the theory of forward integrals, which is the natural generalization of the It\^{o} integral, and considered the most general insider information filtration without the technique of enlargement of filtration. \cite{Kohatsu06} further considered the optimization problem of large insider trading and derived a characterization theorem for the optimal logarithmic portfolio strategy. They supposed the insiders have some influence on the price of the risky-asset. Thus the dynamic of the risky-asset is also an anticipating SDE. Many other extensions can be found in, for example, \cite{DiNunno06,DiNunno09,Draouil15,Peng21,Escudero21}, which are all based on the forward integral.

Although many insider-trading problems have been studied, the basic theory of forward integrals is not completed since it is difficult to obtain the properties of forward integrals without other theories. Moreover, some conditions of results, like the forward integrability of coefficients of the risky asset $S$, are abstract and difficult to verify. From this point of view, Malliavin calculus provides a natural way to study the properties of forward integrals due to the completeness of its theory and the relationship between the forward integral and the Skorohod integral. \cite{Russo93,Alos98,Leon00, Kohatsu06,Nualart06} studied the basic properties of forward integrals using Malliavin calculus. However, some conditions of these properties are restrictive (see Remarks \ref{introres} and \ref{introres2}) and could be weakened by considering the space ${\cal L}^{1,2,q-}$ (see Definition \ref{ddd}) and the localization technique (see Definition \ref{localtech}), which were introduced by \cite{Nualart06}. Moreover, the most important It\^{o} formula for forward integrals has not been derived by the method of Malliavin calculus. Although \cite{Nualart06} gave a theorem for the It\^{o} formula, the forward integral is defined by Riemann sums and it requires an extra continuous condition which might not be applied to insider trading directly since we suppose the portfolio process $\pi$ to be c\`{a}gl\`{a}d.

Besides insider trading, the robust optimal portfolio under model uncertainty is another important topic of the optimization problem, which has been taken into account by many scholars in recent years. As pointed out by \cite{Chen02}, the risk-based models that constitute the paradigm have well documented empirical failures. A robust optimal strategy is the optimal strategy under the worst-case probability assuming that the ambiguity aversion is considered. In other words, when the investor is ambiguity averse, she might not believe the model is accurate by empirical statistics, which forces her to choose the robust optimal strategy under the model uncertainty. Thus the optimization problem (\ref{intro_equ1}) turns to the following robust optimization problem (see \cite{Chen02})
\begin{equation}\label{intro_equ2}
\begin{aligned}
 \min_{\theta}\max_{\pi}\mathbb{E}_{{\cal Q}^\theta}\left[ U(X^\pi_T)+\int_0^Tg(\theta_s)\mathrm{d}s  \right]=  \max_{\pi}\min_{\theta}\mathbb{E}_{{\cal Q}^\theta}\left[ U(X^\pi_T)+\int_0^Tg(\theta_s)\mathrm{d}s  \right],
\end{aligned}\end{equation}
where ${\cal Q}^{\theta}$ is the prior probability measure to describe the model uncertainty parametrized by $\theta$, and $g$ is viewed as a step adopted to penalize the difference between ${\cal Q}^\theta$ and original reference probability $\mathbb P$. In fact, ${\cal Q}^\theta$ is induced by the density process $\varepsilon^\theta$ (the Dol\'{e}ans-Dade expoential of $\theta$) by the Girsanov theorem. The model uncertainty was designed originally to face the fact that mean rates of return and other drift parameters in stochastic models for risky assets are difficult to estimate. We refer to \cite{Chen02,Maenhout04,Maenhout06,Flor14,Gu20} for further studies. 

When combining model uncertainty with insider trading, (\ref{intro_equ2}) turns to an anticipating stochastic differential game problem. One can not apply the forward integral method (\cite{Biagini05}) directly since there is no useful result with variation to the other controlling process $\theta$. \cite{An13} developed a general stochastic maximum principle for the anticipating stochastic differential game problem by using Malliavin calculus. However, the result could only be applied to controlled It\^{o}-L\'{e}vy processes due to the complexity of the Malliavin derivative. When considering the insider-trading problem under model uncertainty, the controlling process $\theta$ is also supposed to be ${\cal H}_t$-adapted, the reason of which could be found in \cite{Chen02}. Thus the construction (and corresponding variation) of the set of prior probability measures is another difficult point since the Girsanov theorem could not be applied directly to the anticipating integral. 

To the best of our knowledge, only \cite{Danilova10} and \cite{Peng21} considered the insider-trading problem under model uncertainty. However, the insider information filtration are both in the special initial enlargement type (\ref{initialtype}). Moreover, \cite{Danilova10} only considered the special parameter uncertainty, and the robust optimal strategy was not considered since the uncertain parameter could be estimated by the filter algorithm. The method was based on the classical enlargement for filtration technique. \cite{Peng21} considered the robust optimal strategy for a insurer with insider information. However, the ${\cal Q}^\theta$ constructed in their paper is not exact a probability measure when $\theta(y)=\theta(Y)$, and the robust optimal strategy was characterized by a nested linear BSDE, which is very hard to solve.

Enlightened by these works, this paper is devoted to solve the optimization problem of insider trading under model uncertainty, i.e., the robust optimal portfolio for an insider. We use the theory of forward integrals to get the half equations about the optimal strategy, and then transform the problem into an nonaticipative stochastic control problem to obtain the other half equations by the stochastic maximum principle. Then we discuss the two typical cases where the insider is `small' (i.e., her strategy has no influence on the financial market) and `large' (i.e., her strategy has certain influence on the financial market). The main contributions are as follows.

\begin{itemize}
 \item  On the aspect of basic mathematical theory, we improve some properties of the forward integral using the Malliavin calculus, and derive the It\^{o} formula for forward integrals by Skorohod integrals.
\end{itemize}

\begin{itemize}
 \item  We introduce the abstract definition of the set of prior probability measures by the semimartingale decomposition theory, and construct the optimization model with the general insider information and model uncertainty. We first give a half characterization of the robust optimal portfolio strategy with respect to the controlling process $\pi$, and derive the semimartingale property of $W$ with respect to the insider information filtration $\{{\cal H}_t\}_{0\le t\le T}$. Thus the problem turns to classic nonanticipative stochastic differential game problem. Then we give the total characterization of the robust optimal portfolio strategy using the stochastic maximum principle.
 \end{itemize}

\begin{itemize}
 \item  We further give the specific characterization of the robust optimal portfolio when dealing with typical cases. When considering the small insider case and the large insider case, we turn our problem to a (non-nested) linear BSDE by the stochastic maximum principle and to a quadratic BSDE by a combined method, respectively. In particular, when the insider information filtration is of the initial enlargement type, we use the Donsker $\delta$ functional technique in Malliavin calculus to solve these BSDEs with general filtrations, and derive the closed form of the robust optimal portfolio strategy in the small insider case.
\end{itemize}

\begin{itemize}
 \item  We do simulation and Economic analysis based on the analytic expression for the value function and strategy in the continuous case.We find that in all types both insider information and ‘large’ insurer have a positive impact on the utility and ambiguity-aversion has a negative impact on the utility.In order to compare utility gain in different situation, We obtain the critical future time for a trader to balance its loss on robust case ,   that is ,the quality of inside information needed to  balance its loss due to model uncertainty .This critical  point decreases with drift part and increase with volatility.
\end{itemize}

This paper is organized as follows. In Section~\ref{MVcalculus}, we introduce the basic theory of the Malliavin calculus and improve the theory of forward integrals by Malliavin calculus. Moreover, we derive the It\^{o} formula for forward integrals (Theorem \ref{itofor}) by Skorohod integrals. In Section~\ref{Sec:model}, we formulate the optimization problem of insider trading with model uncertainty. We give the first characterization and the total characterization of the robust optimal portfolio in Section~\ref{half_section} and Section~\ref{tt_section}, respectively. We give further characterizations (BSDEs) of the robust optimal portfolio when considering the situation of small insider and large insider in Section~\ref{sec_exampleg_sub1} and Section~\ref{sec_example1}, respectively. We derive the closed form of the robust optimal portfolio in some special cases in Section~\ref{sec_exampleg_sub1}. Moreover, we give the optimal strategy for a large insider in the special case without model uncertainty in Section~\ref{Sub_particular2} for comparison with that under model uncertainty in the next section. The simulation and economic analysis are performed in Section~\ref{numericalsec}. Finally, we summarize our conlusions in Section~\ref{sec:conclusion}.

\section{The forward integral by Malliavin calculus}
\label{MVcalculus}
In this section, we briefly introduce the basic theory of the Skorohod integral in Malliavin calculus (see \cite{Nualart06, Huang00}), and then using it to improve the theory of the forward integral. The main result of this section is that we extend some propositions and obtain the It\^o formula for the forward integral by Malliavin calculus to fit our setting. Moreover, some important details and supplements which may be ignored in Malliavin calculus (\cite{Nualart06}) are provided. 

\subsection{The basic theory of Malliavin calculus}
Consider a filtered probability space $(\Omega, {\cal F}_T, \{{\cal F}_t\}_{0\le t\le T},{\mathbb P})$ and the standard Brownian motion ${ W}=\{{ W}_t\}_{0\le t\le T}$, where $\{{\cal F}_t\}_{0\leq t\leq T}$ is the $\mathbb P$-augmentation of the filtration generated by $ W$, which satisfies the usual condition (see \cite{Karatzas91}). We also denote by $H$ the real Hilbert space $L^2([0,T])$. Then $(\Omega,{\cal F}_T,\mathbb{P};H)$ is an irreducible Gaussian space (see \cite{Huang00}).

We denote by $C^\infty_p({\mathbb R }^n)$ the set of all infinitely continuously differentiable functions $\varphi$ such that $\varphi$ and all of its partial derivatives have
polynomial growth. For a given separable Hilbert space $E$, denote by ${\cal S}(\Omega; E)$ the class of $E$-valued smooth random variables such that $X\in{\cal S}$ has the form
\begin{equation*}
X=\sum_{j=1}^m\varphi_j\left(\int_0^T h_{j,1}(s)\mathrm{d}{ W}_{s},\cdots, \int_0^T h_{j,n_j}(s)\mathrm{d}{ W}_{s}\right)e_j,
\end{equation*}
where $m,n_j\in {\mathbb N_+}$, $\varphi_j\in C^\infty_p({\mathbb R }^{  n_j})$, $h_{j,l}\in H$, and $e_j\in E$ for $l=1, \cdots, n_j$ and $j=1,\cdots,m$. Note that ${\cal S}(\Omega; E)$ is dense in $L^p(\Omega; E)$\footnote{$L^p(\Omega; E):=\{X: \Omega\rightarrow E$ is measurable and $\mathbb {E}\Vert X\Vert_{E}^p<\infty\}$ for $p\ge 1$.} for $p\ge 1$. The Malliavin gradient ${ D}_t$ of the $E$-valued smooth random variable $X$ is defined as the $H \otimes E$-valued random variable ${ D}_tX $ give by
\begin{equation*}
D_t X:=\sum_{j=1}^m\sum_{l=1}^{n_j} \frac{\partial \varphi_j}{\partial x_{l}}\left(\int_0^T h_{j,1}(s)\mathrm{d}{ W}_{s},\cdots, \int_0^T h_{j,n_j}(s)\mathrm{d}{ W}_{s}\right)h_{j,l}(t)\otimes e_j.
\end{equation*}
 For $k=2,3,\cdots$, the $k$-iteration of the operator $D_t$ can be defined in such a way that for $X\in{\cal S}(\Omega;E)$, $D^k_tX$ is a random variable with values in $H^{\otimes k}\otimes E$.

We can check that ${ D}^k_t$ is a closable operator from ${\cal S}(\Omega;E)\subset  L^p(\Omega;E)$ to $ L^p(\Omega; H^{\otimes k}\otimes E)$\footnote{When $p=2$, we have $L^2(\Omega; H^{\otimes k}\otimes E)\cong L^2(\Omega)\otimes H^{\otimes k}\otimes E\cong L^2(\Omega\times [0,T]^k)\otimes E\cong L^2(\Omega\times [0,T]^k;E)$ (see \cite{Huang00}).} for $k\in{\mathbb N}_+$ and $p\ge 1$. Denote by $D^{k,p}(\Omega;E)$ the closure of the class of smooth random variables ${\cal S}(\Omega, E)$ with respect to the graph norm (see \cite{Brezis10})
\begin{equation*}
\begin{aligned}
\| X\|_{D^{k,p}(\Omega;E)}:&=\left[\|X\|_{L^p(\Omega;E)}^p+\sum_{j=1}^k\|{ D}^j_tX\|^p_{L^p(\Omega; H^{\otimes j}\otimes E)} \right]^{\frac{1}{p}}.
\end{aligned}
\end{equation*}
Then $({ D}^1_t,\cdots,D^k_t)'$ is a closed dense operator with dense domain $D^{k,p}(\Omega;E)$, which is a Banach space under the norm $\Vert\cdot\Vert_{D^{k,p}(\Omega;E)}$ and even a Hilbert space when $p=2$. In addition, we define $D^{k,\infty}(\Omega;E):=\bigcap_{p\ge 1}D^{k,p} (\Omega;E)$ and $D^{\infty,\infty}(\Omega;E):=\bigcap_{k\in {\mathbb N}_+}\bigcap_{p\ge 1}D^{k,p}(\Omega;E)$, which are both locally convex space (see \cite{Brezis10, Huang00}).

When $E=\mathbb {R}$, $k=1$ and $p=2$, define by ${\delta }: L^2(\Omega\times [0,T])  \rightarrow L^2(\Omega)$ with domain $\text{Dom}\ { \delta}$ the adjoint of the closed dense operator ${ D}_t:L^2(\Omega)\rightarrow L^2(\Omega\times [0,T])$ that is called the Malliavin divergence operator, or the Skorohod integral. If we denote by $L_a^2(\Omega\times [0,T])$ the set of all ${\cal F}_t$-adapted processes $u\in L^2(\Omega\times [0,T])$, then $L_a^2(\Omega\times [0,T])\subset \text{Dom } \delta$, and $\delta u$ equals the It\^{o} integral $\int_0^Tu_t\mathrm{d}W_t$ for $u\in L_a^2(\Omega\times [0,T])$. Thus we will keep use of the notation $\int_0^T{ u}_t \mathrm{d}{ W}_t:=\delta { u}$ when ${ u}\in \text{Dom } \delta $. 

There are rich properties of $D_t$ and $ \delta$. Some of them are given in Appendix $\ref{appM}$ which might be needed later.

  \subsection{The Skorohod integral}\label{skorohodin}
 When $u$ is Skorohod integrable (i.e., $u\in \text{Dom }\delta$), a natural question is that whether $\int_0^tu_s\mathrm{d}W_s:=\delta (u_s1_{[0,t]}(s))$ makes sense for a fixed $t\in[0,T]$. Unfortunately, $u_s1_{[0,t]}(s)$ is not Skorohod integrable in general. However, since $D^{1,2}(\Omega;H)\subset \text{Dom }\delta$ (see Lemma \ref{boundeddelta}), $\int_0^tu_s\mathrm{d}W_s$ is well-defined for $u\in D^{1,2}(\Omega;H)$ by the chain rule (Lemma \ref{chainrule}), and we can obtain more useful results in some subspaces of $D^{1,2}(\Omega;H)$.
  
  \begin{definition} 
Define by ${\cal L}^{1,2}$ the space $D^{1,2}(\Omega;H)$, which is isomorphic to $  L^2([0,T];D^{1,2}(\Omega)) $ (see \cite{Nualart06}). For every $k\in{\mathbb N}_+$ and any $p\ge 2$,  define by ${\cal L}^{k,p}$ the space $L^p([0,T];D^{k,p}(\Omega))$, which is a subspace of $  D^{k,p}(\Omega;H)$.
  \end{definition}

  \begin{definition} \label{ddd}
 Let $u\in {\cal L}^{1,2}$, and let $q\in [1,2]$. We say that $u\in {\cal L}^{1,2,q-}$ (resp. $u\in {\cal L}^{1,2,q+}$) if there exists a (unique) process in $L^{q}(\Omega\times [0,T])$ denoted by $D^-u$ (resp. $D^+u$) such that
 \begin{equation*}\begin{aligned}
& \lim_{\varepsilon\rightarrow 0^+}\int_0^T\sup_{(s-\varepsilon)\vee 0\le t<s}\mathbb{E}(|D_su_t-(D^-u)_s|^q)=0.\\
 &(\text{resp. } 
  \lim_{\varepsilon\rightarrow 0^+}\int_0^T\sup_{s< t\le (s+\varepsilon)\wedge T}\mathbb{E}(|D_su_t-(D^+u)_s|^q)=0)
\end{aligned} \end{equation*}
In particular, if $u\in  {\cal L}^{1,2,q-}\cap  {\cal L}^{1,2,q+}$, we say that $u\in  {\cal L}^{1,2,q}$, and define $\nabla u:=D^-u+D^+u$ which is also in $L^q(\Omega\times[0,T])$.
 \end{definition}

\begin{remark}\label{earlyth}
In the earlier theory of the Skorohod integral,  the space ${\cal L}^{1,2,C}$ (see \cite{Nualart95, Huang00}), which allows the existence of $(D^-u)_s:=\lim_{\varepsilon\rightarrow 0^+}D_su_{s-\varepsilon}$ and $(D^+u)_s:=\lim_{\varepsilon\rightarrow 0^+}D_su_{s+\varepsilon}$ in $L^2(\Omega)$ uniformly in $s$, is too restrictive. It can not characterize  the convergence in $L^q(\Omega)$ and makes some proofs for the sufficiency difficult. Moreover, we will not suppose the existence of  $D^+u$ in the discussion of the forward integral in Section \ref{forwardin}, which might not be realized for a c\`{a}gl\`{a}d process in financial problems. Thus we will analyze in the more general spaces ${\cal L}^{1,2,q-}$ and ${\cal L}^{1,2,q+}$ instead of ${\cal L}^{1,2,C}$ introduced in \cite{Nualart06}, the second edition of \cite{Nualart95}. 
\end{remark}

  Like the It\^o formula in the classical It\^o theory (see \cite{Karatzas91}), there is also a version of the It\^o formula for the Skorohod integral. However, before the statement, a localization technique is required, which is similar to the method of the local martingale in It\^o theory. 
  
  \begin{definition}\label{localtech}
 If $L $ is a class of random variables (or random fields), we denote by $L_{\text{loc}}$ the set of random variables (or random fields) $X$ such that there exists a sequence $\{(\Omega_n,X_n)\}\subset {\cal F}\times L$ with the following properties:
\begin{itemize}
 \item[(i)] $\Omega_n \uparrow \Omega$, a.s.
\item[(ii)] $X_n=X$ a.s. on $\Omega_n$.
\end{itemize}
Moreover, we can easily check that $L_{\text{loc}}$ is a linear space if $L$ is a linear space.
  \end{definition}

Due to the local properties of $D_t$ and $\delta$ (Lemma \ref{locD} and \ref{locdelta}),  the extensions of $D^k_t: D^{k,p}_{\text{loc}}(\Omega;E)\rightarrow L^p_{\text{loc}}(\Omega;H^{\otimes k}\otimes E)$ ($p\ge 1$) and $\delta:{\cal L}^{1,2}_{\text{loc}} \rightarrow L^2_{\text{loc}}(\Omega) $ are well-defined given that $E$ is a separable Hilbert space.  The localizations for $D^-$, $D^+$ and $\nabla$ are in a similar way.  The next proposition shows that the Skorohod integral is also an extension of the generalized It\^o integral in the sense of localization.

   \begin{proposition}\label{locik}
   \textnormal{(Proposition 1.3.18,  \cite{Nualart06})} 
   Let $u$ be a measurable, ${\cal F}_t$-adapted process such that $\int_0^Tu_t^2\mathrm{d}t<\infty$, a.s. Then $u$ belongs to $ (\text{Dom }\delta)_{\text{loc}}$ and $ \delta u$ is well-defined. Moreover, $\delta u$ coincides with the It\^o integral of $u$ (with respect to the local martingale $W$). Thus we can keep use of the notation $\int_0^Tu_t\mathrm{d}W_t:=\delta u$ without ambiguity when $u\in (\text{Dom } \delta)_{\text{loc}}$ and $\delta u$ is well-defined.   
   \end{proposition}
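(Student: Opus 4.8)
The plan is a localization argument resting on the local property of the divergence operator. First I would build the exhausting sequence explicitly. For $n\in\bN_+$ put $\tau_n:=\inf\{t\in[0,T]:\int_0^t u_s^2\,\mathrm{d}s\ge n\}\wedge T$, which is an $\cF_t$-stopping time since $t\mapsto\int_0^t u_s^2\,\mathrm{d}s$ is continuous and $\cF_t$-adapted, and set $u^{(n)}_t:=u_t\mathbf 1_{[0,\tau_n]}(t)$ together with $\Omega_n:=\{\int_0^T u_s^2\,\mathrm{d}s\le n\}\in\cF_T$. Then $u^{(n)}$ is $\cF_t$-adapted with $\int_0^T (u^{(n)}_t)^2\,\mathrm{d}t\le n$, hence $u^{(n)}\in L^2_a(\Omega\times[0,T])\subset\text{Dom }\delta$; on $\Omega_n$ we have $\tau_n=T$, so $u^{(n)}=u$ a.s.\ on $\Omega_n$; and since $\int_0^T u_s^2\,\mathrm{d}s<\infty$ a.s.\ we get $\Omega_n\uparrow\Omega$ a.s. By Definition \ref{localtech} the pairs $\{(\Omega_n,u^{(n)})\}$ witness $u\in(\text{Dom }\delta)_{\text{loc}}$.

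Next I would define $\delta u$ and verify it is unambiguous. On $\Omega_n$ set $\delta u:=\delta u^{(n)}$. For $m\le n$ the process $u^{(n)}-u^{(m)}$ vanishes a.s.\ on $\Omega_m$, so by the local property of $\delta$ (Lemma \ref{locdelta}) one has $\delta u^{(n)}=\delta u^{(m)}$ a.s.\ on $\Omega_m$; thus the definitions on the $\Omega_n$ are consistent and patch together to a random variable $\delta u$ defined a.s.\ on $\bigcup_n\Omega_n=\Omega$. The same lemma shows independence of the chosen localizing sequence: if $\{(\Omega_n',u_n')\}$ is another such sequence, then on $\Omega_n\cap\Omega_m'$ both $u^{(n)}$ and $u_m'$ coincide with $u$, hence $\delta u^{(n)}=\delta u_m'$ a.s.\ there, and the two a.s.-defined random variables agree.

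Finally I would identify $\delta u$ with the It\^o integral. Each $u^{(n)}$ is $\cF_t$-adapted and square-integrable on $\Omega\times[0,T]$, so the basic identification recalled before this proposition gives $\delta u^{(n)}=\int_0^T u^{(n)}_t\,\mathrm{d}W_t$ in the ordinary $L^2$ sense. On the other hand, the classical It\^o integral $\int_0^\cdot u_t\,\mathrm{d}W_t$ of a progressively measurable process with $\int_0^T u_t^2\,\mathrm{d}t<\infty$ a.s.\ (with respect to the local martingale $W$) is itself built by localization and is characterized by coinciding, on $\{\tau_n=T\}=\Omega_n$, with $\int_0^T u^{(n)}_t\,\mathrm{d}W_t$. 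Comparing the two descriptions on each $\Omega_n$ and letting $n\to\infty$ yields $\delta u=\int_0^T u_t\,\mathrm{d}W_t$ a.s.

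I expect the only genuinely delicate point to be the well-definedness of $\delta u$ — both its independence of the localizing sequence and the compatibility of the Malliavin-side localization with the one used to construct the stochastic integral against a local martingale. Both reduce to the local property in Lemma \ref{locdelta} (together with the analogous local property of the It\^o integral), so once that lemma is available the argument is routine; the measurability of $\tau_n$ and $\Omega_n$, the adaptedness of $u^{(n)}$, and the bound $\int_0^T(u^{(n)}_t)^2\,\mathrm{d}t\le n$ are all immediate.
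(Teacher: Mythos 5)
The paper gives no proof of this proposition---it is quoted directly from Nualart (Proposition 1.3.18)---and your localization argument is exactly the standard one: stop at $\tau_n$, use $L^2_a(\Omega\times[0,T])\subset\text{Dom }\delta$ on each truncated piece, and patch via the local property, so the proof is correct. The one point to tighten is that Lemma \ref{locdelta} as stated requires membership in ${\cal L}^{1,2}$, which $u^{(n)}-u^{(m)}$ need not satisfy; but since you have already identified $\delta u^{(n)}$ with the It\^o integral of $u^{(n)}$, the consistency on $\Omega_m$ follows from the local property of the It\^o integral itself (which you also invoke), so no gap remains.
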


 Now we can give the It\^o formula for the Skorohod integral.
 
  \begin{theorem}\label{itos}
  \textnormal{(Theorem 3.2.2,  \cite{Nualart06})} 
  Consider a process of the form $X_t=X_0+\int_0^tu_s\mathrm{d}W_s+\int_0^tv_s\mathrm{d}s$, where $X_0\in D^{1,2}_{\text{loc}}(\Omega)$, $u\in ({\cal L}^{2,2} \cap {\cal L}^{1,4})_{\text{loc}}$, and $v\in {\cal L}^{1,2}_{\text{loc}}$. Then $X_t$ is continuous and $X\in {\cal L}^{1,2,2}_{\text{loc}}$ by Lemma \ref{continuous} and \ref{sko2}, respectively. Moreover, if $f\in C^2 (\mathbb{R})$, then $f'(X_t)u_t\in {\cal L}_{\text{loc}}^{1,2}$ and we have 
   \begin{equation}\label{equitos}
\begin{aligned}
f(X_t)=f(X_0)+\int_0^tf'(X_s)\mathrm{d}X_s+\frac{1}{2}\int_0^tf''(X_s)u_s^2\mathrm{d}s+\int_0^tf''(X_s)(D^-X)_su_s\mathrm{d}s.
\end{aligned}
\end{equation}
\end{theorem}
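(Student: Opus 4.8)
The plan is to derive (\ref{equitos}) from a second--order Taylor expansion along a shrinking partition, after a localization that puts the data into their global spaces. By the local properties of $D$ and $\delta$ (Lemma \ref{locD}, Lemma \ref{locdelta}) and the analogous property of $D^{-}$, and using the continuity of $X$ (Lemma \ref{continuous}), one finds $\Omega_n\uparrow\Omega$ on which $X_0$, $u$, $v$ agree with processes in $D^{1,2}(\Omega)$, ${\cal L}^{2,2}\cap{\cal L}^{1,4}$, ${\cal L}^{1,2}$ respectively and $\sup_{t\le T}|X_t|\le n$; since every term of (\ref{equitos}) is local in $(X_0,u,v,f)$ and $\Omega_n\uparrow\Omega$, it suffices to prove the identity in the reduced setting where $X_0\in D^{1,2}(\Omega)$, $u\in{\cal L}^{2,2}\cap{\cal L}^{1,4}$, $v\in{\cal L}^{1,2}$, $X$ takes values in a fixed compact set, and (replacing $f$ by $f\chi$ with $\chi\in C^\infty_p(\mathbb{R})$ equal to $1$ on that set) $f,f',f''$ are bounded. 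In this setting $X\in{\cal L}^{1,2,2}$ (Lemma \ref{sko2}), so $(D^{-}X)_s$ exists in $L^2(\Omega\times[0,T])$, and $f'(X_\cdot)u_\cdot\in{\cal L}^{1,2}\subset\text{Dom }\delta$ by the chain rule (Lemma \ref{chainrule}) and Lemma \ref{boundeddelta}; hence both sides of (\ref{equitos}) are well defined.

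Fix $t\in(0,T]$ and partitions $0=t_0<\cdots<t_{m_n}=t$ with mesh tending to $0$; set $I_j=(t_j,t_{j+1}]$ and $\Delta_jX=X_{t_{j+1}}-X_{t_j}$. Taylor's formula gives
\[
f(X_t)-f(X_0)=\sum_j f'(X_{t_j})\Delta_jX+\tfrac12\sum_j f''(X_{t_j})(\Delta_jX)^2+R_n,\qquad R_n=\tfrac12\sum_j\bigl(f''(\xi_j)-f''(X_{t_j})\bigr)(\Delta_jX)^2,
\]
with $\xi_j$ between $X_{t_j}$ and $X_{t_{j+1}}$. Writing $\Delta_jX=\int_{I_j}u\,\mathrm{d}W+\int_{I_j}v\,\mathrm{d}s$ and applying the integration--by--parts identity $\delta(Fw)=F\delta(w)-\int_0^T D_sF\,w_s\,\mathrm{d}s$ (Appendix \ref{appM}) with $F=f'(X_{t_j})$, $w=u\mathbf{1}_{I_j}$, the first--order sum becomes
\[
\sum_j f'(X_{t_j})\!\int_{I_j}\!v_s\,\mathrm{d}s\;+\;\delta\!\Bigl(\sum_j f'(X_{t_j})\,u_s\mathbf{1}_{I_j}(s)\Bigr)\;+\;\sum_j\int_{I_j}f''(X_{t_j})\,(D_sX_{t_j})\,u_s\,\mathrm{d}s.
\]
The first summand converges a.s.\ to $\int_0^t f'(X_s)v_s\,\mathrm{d}s$; the second converges in $L^2(\Omega)$ to $\int_0^t f'(X_s)u_s\,\mathrm{d}W_s$, once one checks that the step integrands tend to $f'(X_\cdot)u_\cdot$ in ${\cal L}^{1,2}$ --- the values pointwise by continuity of $X$, the Malliavin derivatives by the $({\cal L}^{2,2}\cap{\cal L}^{1,4})$--regularity of $u$ together with $X\in{\cal L}^{1,2}$ --- so that continuity of $\delta$ on ${\cal L}^{1,2}$ applies; and the third converges to $\int_0^t f''(X_s)(D^{-}X)_su_s\,\mathrm{d}s$, because for $s\in I_j$ one has $t_j\uparrow s$, hence $D_sX_{t_j}\to(D^{-}X)_s$ in the mean sense of Definition \ref{ddd} (exactly what $X\in{\cal L}^{1,2,2}$ provides), while $f''(X_{t_j})\to f''(X_s)$ boundedly. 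Together these yield $\int_0^t f'(X_s)\,\mathrm{d}X_s+\int_0^t f''(X_s)(D^{-}X)_su_s\,\mathrm{d}s$.

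For the quadratic sum, expand $(\Delta_jX)^2$; the terms containing $\int_{I_j}v$ are $o(1)$ in probability. To $\bigl(\delta(u\mathbf{1}_{I_j})\bigr)^2$ apply the product formula, obtaining $\delta\bigl(\delta(u\mathbf{1}_{I_j})\,u\mathbf{1}_{I_j}\bigr)+\int_{I_j}u_s^2\,\mathrm{d}s+\int_{I_j}\delta\bigl((D_su)\mathbf{1}_{I_j}\bigr)u_s\,\mathrm{d}s$; weighting by $\tfrac12 f''(X_{t_j})$ and summing, the middle piece converges a.s.\ to $\tfrac12\int_0^t f''(X_s)u_s^2\,\mathrm{d}s$, the last piece is bounded in $L^1(\Omega)$ by a constant multiple of $\sum_j|I_j|^{3/2}\to0$ (here $u\in{\cal L}^{2,2}$ makes $\delta((D_su)\mathbf{1}_{I_j})$ an $L^2$ object with the right small norm), and the Skorohod piece $\tfrac12\sum_j f''(X_{t_j})\delta\bigl(\delta(u\mathbf{1}_{I_j})\,u\mathbf{1}_{I_j}\bigr)$ tends to $0$ in $L^2(\Omega)$ because its integrand $\tfrac12\sum_j f''(X_{t_j})\bigl(\int_{I_j}u\,\mathrm{d}W\bigr)u_s\mathbf{1}_{I_j}(s)$ tends to $0$ in ${\cal L}^{1,2}$. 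Finally $R_n\to0$ in probability, from $\sup_j|\Delta_jX|\to0$ (uniform continuity of $X$ on $[0,t]$) and $\sum_j(\Delta_jX)^2=O_{\mathbb{P}}(1)$ (the case $f''\equiv1$ of the preceding computation). Letting the mesh tend to $0$ yields (\ref{equitos}) on each $\Omega_n$, hence on $\Omega$.

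I expect the main obstacle to be the vanishing in $L^2(\Omega)$ of the anticipating quadratic--variation correction $\tfrac12\sum_j f''(X_{t_j})\delta\bigl(\delta(u\mathbf{1}_{I_j})\,u\mathbf{1}_{I_j}\bigr)$: controlling the Malliavin derivative of the step integrand uniformly in the partition is exactly where the joint hypothesis $u\in{\cal L}^{2,2}\cap{\cal L}^{1,4}$ is consumed, and it is what fails for a merely ${\cal L}^{1,2}$ integrand. A related delicate point is the interface with general $f\in C^2$: since the computations above only manifestly require $f,f',f''$, one should make the Skorohod--integral limits rest on direct $L^2(\Omega)$ estimates (product formula and the $L^2$--bound for $\delta$ on $D^{1,2}(\Omega;H)$, Lemma \ref{boundeddelta}) rather than on chain--rule identities that would call for a third derivative of $f$ --- or, alternatively, prove the formula first for smooth $f$ and extend by uniform $C^2$--approximation on the compact range of $X$. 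Everything else is bookkeeping: tracking, at each passage to the limit, which topology --- $L^2(\Omega\times[0,T])$, ${\cal L}^{1,2}$, or the modulus of Definition \ref{ddd} --- is the operative one.
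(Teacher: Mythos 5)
The paper does not prove this statement: Theorem \ref{itos} is quoted verbatim from \cite{Nualart06} (Theorem 3.2.2 there), and only the forward-integral version, Theorem \ref{itofor}, is proved in Appendix \ref{Appp} by reducing it to this one. Your outline is essentially the textbook proof that the citation points to --- localization, Taylor expansion along a shrinking partition, the multiplication formula $\delta(Fw)=F\delta w-\int_0^T D_sF\,w_s\,\mathrm{d}s$ to extract the $\int f''(X_s)(D^-X)_su_s\,\mathrm{d}s$ correction from the first-order sum, and the commutation relation for $D\delta$ to extract $\tfrac12\int f''(X_s)u_s^2\,\mathrm{d}s$ from the quadratic sum --- and it is correct in structure. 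The one place where you are momentarily loose (replacing $\sum_j f''(X_{t_j})\,\delta\bigl(\delta(u\mathbf{1}_{I_j})u\mathbf{1}_{I_j}\bigr)$ by a single Skorohod integral of $\sum_j f''(X_{t_j})\bigl(\int_{I_j}u\,\mathrm{d}W\bigr)u\mathbf{1}_{I_j}$ silently invokes the multiplication formula again and produces a further correction involving $D_sf''(X_{t_j})$, i.e.\ a third derivative of $f$) is exactly the subtlety you flag at the end, and your proposed fix --- prove it for smooth $f$ and pass to general $f\in C^2$ by uniform approximation on the compact range of the localized $X$ --- is the standard resolution.
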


   \subsection{The forward integral}\label{forwardin}
The Skorohod integral process $\int_0^tu_s\mathrm{d}W_s$ is anticipating, i.e., it is not adapted to the filtration ${\cal F}_t$. There is another anticipating integral called the forward integral, which was introduced by \cite{Berger82} and defined by \cite{Russo93}. This type of integral has been studied before and applied to insider trading in financial mathematics (see \cite{Russo00, Biagini05}). However, the sufficiency of the forward integrability and some related properties may be hard to obtain without the help of Malliavin calculus (see \cite{Alos98, Leon00, Kohatsu06}).  Since some results in the above literature are restrictive, we will study the forward integral by Malliavin calculus here completely. All proofs of our results in this subsection are given in Appendix \ref{Appp}.
  
    \begin{definition}
Let $u\in L^2(\Omega\times [0,T])$. The forward integral of $u$ is defined by
\begin{equation} 
\begin{aligned}
\int_0^T u_t\mathrm{d}^-W_t:=\lim_{\varepsilon\rightarrow 0^+}\varepsilon^{-1}\int_0^Tu_t\left(W_{(t+\varepsilon)\wedge T}-W_t\right)\mathrm{d}t,
\end{aligned}
\end{equation}
if the limit exists in probability, in which case $u$ is called forward integrable and we write $u\in\text{Dom } \delta^-$. If the limit exists also in $L^p(\Omega)$, we write $u\in\text{Dom}_p\  \delta^-$.
  \end{definition}

 \begin{remark}\label{extensionoffor}
 The forward integral is also an extension of the It\^{o} integral. In other words, if there is a filtration $\{{\cal E}_t\}_{0\le t\le T}$ satisfying the usual condition such that ${\cal E}_t\supset {\cal F}_t$ and $W$ is a semimartingale with respet to $\{{\cal E}_t\}$, $t\in[0,T]$, then we have $\int_0^Tu_t\mathrm{d}^-W_t=\int_0^Tu_t\mathrm{d}W_t$ for every ${\cal E}_t$-adapted process $u$ such that $u$ is It\^{o} integrable with respect to $W$. We refer to \cite{DiNunno09} for the proof.
 \end{remark}

  Notice that, like the Skorohod integral, the forward integrability of $u_s1_{[0,t]}(s)$ for $t\in[0,T]$ can not be deduced from that of $u$, which might be ignored in some literature.   However, by Malliavin calculus, we can give the sufficient condition for the above problem and the relationship between the Skorohod integral and the forward integral as the following two propositions, which has not been proved in our settings.

     \begin{proposition}\label{multipf1}
 Let $u\in {\cal L}^{1,2,1-}$. Then for all $t\in[0,T]$, we have $u_s1_{[0,t]}(s)\in \text{Dom}_1  \ \delta^-$ and  
 \begin{equation} \label{equ_for1}
\begin{aligned}
\int_0^t u_s\mathrm{d}^-W_s=\int_0^tu_s\mathrm{d}W_s+\int_0^t(D^-u)_s\mathrm{d}s.
\end{aligned}
\end{equation}
   \end{proposition}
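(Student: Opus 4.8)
The plan is to establish the forward-integral identity \eqref{equ_for1} by passing to the limit $\varepsilon\to 0^+$ in the defining difference quotient, controlling everything with the Skorohod-integral machinery already available. First I would fix $t\in[0,T]$ and consider the random variable
\[
G_\varepsilon:=\varepsilon^{-1}\int_0^t u_s\bigl(W_{(s+\varepsilon)\wedge t}-W_s\bigr)\,\mathrm{d}s.
\]
The standard device is to write the increment $W_{(s+\varepsilon)\wedge t}-W_s=\int_0^t \mathbf 1_{(s,(s+\varepsilon)\wedge t]}(r)\,\mathrm{d}W_r$ as a Skorohod integral, and then use the integration-by-parts (duality) formula from the appendix — of the form $\int_0^t F\,\delta(h)=\delta(Fh)+\langle DF,h\rangle$ — applied to $F=u_s$ and $h=\mathbf 1_{(s,(s+\varepsilon)\wedge t]}$. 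Integrating this over $s\in[0,t]$ against $\mathrm{d}s$ and dividing by $\varepsilon$, one obtains, at least formally,
\[
G_\varepsilon=\delta\!\left(\varepsilon^{-1}\int_0^t u_s\,\mathbf 1_{(s,(s+\varepsilon)\wedge t]}(\cdot)\,\mathrm{d}s\right)+\varepsilon^{-1}\int_0^t\!\int_s^{(s+\varepsilon)\wedge t}D_r u_s\,\mathrm{d}r\,\mathrm{d}s.
\]

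The second step is to identify the limits of the two terms. For the trace term, by the definition of ${\cal L}^{1,2,1-}$ (Definition \ref{ddd}) the inner average $\varepsilon^{-1}\int_s^{(s+\varepsilon)\wedge t}D_r u_s\,\mathrm{d}r$ converges, as $\varepsilon\to 0^+$, to $(D^-u)_s$ in $L^1(\Omega)$, uniformly enough (in the integrated sense of the definition) that the double integral converges in $L^1(\Omega)$ to $\int_0^t (D^-u)_s\,\mathrm{d}s$; note $D_r u_s$ for $r$ slightly larger than $s$ is exactly the regime $(s-\varepsilon)\vee 0\le t<s$ read with the roles of the two time variables — more precisely the relevant one-sided trace here is the one the paper writes $D^-$, since $r\downarrow s$ from the left of the time variable carrying $u$. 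For the Skorohod-integral term, I would show that the integrand
\[
u^\varepsilon_r:=\varepsilon^{-1}\int_{(r-\varepsilon)\vee 0}^{r} u_s\,\mathrm{d}s\quad(\text{restricted to }[0,t])
\]
converges to $u_r\mathbf 1_{[0,t]}(r)$ in $L^2(\Omega\times[0,T])$ (Lebesgue differentiation in the $\mathrm{d}s$ variable, plus dominated convergence using $u\in L^2$), and that moreover $u^\varepsilon\to u\mathbf 1_{[0,t]}$ in ${\cal L}^{1,2}$, i.e. the Malliavin derivatives converge in $L^2(\Omega\times[0,T]^2)$ as well — this uses $u\in{\cal L}^{1,2}$ together with the $L^{1-}$ control on the near-diagonal behaviour of $Du$. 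Since $\delta$ is continuous on ${\cal L}^{1,2}=D^{1,2}(\Omega;H)$ (Lemma \ref{boundeddelta}), this gives $\delta(u^\varepsilon)\to \delta(u\mathbf 1_{[0,t]})=\int_0^t u_s\,\mathrm{d}W_s$ in $L^2(\Omega)$, which in particular shows $u_s\mathbf 1_{[0,t]}(s)\in\text{Dom }\delta$. Combining, $G_\varepsilon$ converges in $L^1(\Omega)$, hence in probability, which yields $u_s\mathbf 1_{[0,t]}(s)\in\text{Dom}_1\,\delta^-$ and formula \eqref{equ_for1}.

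The main obstacle I anticipate is the near-diagonal analysis of the Malliavin derivative — justifying that the trace term genuinely converges to $\int_0^t(D^-u)_s\,\mathrm{d}s$ and simultaneously that $D(u^\varepsilon)$ converges in $L^2$ — because the two time variables in $D_r u_s$ collide as $\varepsilon\to 0$, and the only quantitative hypothesis we have is the integrated $L^1$-modulus in Definition \ref{ddd} rather than a uniform or $L^2$ bound (this is precisely the weakening over ${\cal L}^{1,2,C}$ flagged in Remark \ref{earlyth}). One has to be careful that the interchange of $\delta$ with the limit is legitimate at the level of $L^q$ rather than $L^2$ regularity, and that the decomposition of $G_\varepsilon$ into a Skorohod integral plus a trace is valid for each fixed $\varepsilon>0$ before any limit is taken. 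A secondary technical point is a Fubini-type justification for moving $\delta$ inside the $\mathrm{d}s$-integral over $[0,t]$; this is handled by the standard commutation of the closed operator $\delta$ with Bochner integration, using that $s\mapsto u_s\,\mathbf 1_{(s,(s+\varepsilon)\wedge t]}$ is a well-behaved $D^{1,2}(\Omega;H)$-valued integrand for fixed $\varepsilon$. Once these points are in place the identity \eqref{equ_for1} follows, and the uniqueness of $D^-u$ (already built into Definition \ref{ddd}) makes the statement unambiguous.
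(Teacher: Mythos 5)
Your proposal follows essentially the same route as the paper: decompose the difference quotient via the multiplication formula for $\delta$ (Lemma \ref{multidelta}) plus a stochastic Fubini into a Skorohod integral of the mollified integrand and a trace term, then pass to the limit using Lemma \ref{conv_1} (Hardy--Littlewood/Lebesgue differentiation) together with the boundedness of $\delta$ on ${\cal L}^{1,2}$ for the first term, and the defining $L^1$-modulus of ${\cal L}^{1,2,1-}$ for the second. The only quibble is that your $G_\varepsilon$ truncates the Brownian increment at $(s+\varepsilon)\wedge t$ rather than $(s+\varepsilon)\wedge T$ as the definition of $\delta^-$ applied to $u\,1_{[0,t]}$ requires (the discrepancy is a boundary term over $s\in(t-\varepsilon,t]$ that vanishes in the limit, and the paper's own estimate of the trace term handles exactly this edge contribution).
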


\begin{remark}\label{introres}
In \cite{Kohatsu06}, the condition $\lim_{\varepsilon\rightarrow0^+}\frac{1}{\varepsilon}\int_{t-\varepsilon}^tu_s\mathrm{d}s=u_t$ in ${\cal L}^{1,2}$ which makes (\ref{equ_for1}) hold is surplus (see Lemma \ref{conv_1}), and the use of $D_{t^+}$ is restrictive by Remark \ref{earlyth}. In \cite{Alos98}, the condition $u\in {\cal L}^{F}$ requires the second derivative of $u$, whereas we do not. In \cite{Nualart06}, the forward integral is defined by Riemann sums, and (\ref{equ_for1}) requires an extra continuous condition which might not be applied to insider trading theory directly when we consider the c\`{a}gl\`{a}d process.
\end{remark}
   
\begin{remark}
 Proposition \ref{multipf1} also shows that the forward integral can be extended to a linear operator $\delta^-$ from ${\cal L}^{1,2,1-}_{\text{loc}}$ into $L^1_{\text{loc}}(\Omega)$.
 \end{remark}

     \begin{proposition}\label{multipf2}
Let $u$ be a process in $  {\cal L}^{1,2,2-}$ and be $L^2$-bounded. Consider an ${\cal F}_t$-adapted process $\sigma\in {\cal L}^{1,2}$, which is $L^2$-bounded and left-continuous in the norm $L^2(\Omega)$. Also assume that $\sigma$ and $D_s\sigma_t$ are bounded. Then $u\sigma\in {\cal L}^{1,2,1-}$,  and for all $t\in [0,T]$,
 \begin{equation} \label{equ_for2}
\begin{aligned}
\int_0^t u_s\sigma_s\mathrm{d}^-W_s=\int_0^tu_s\sigma_s\mathrm{d}W_s+\int_0^t(D^-u)_s\sigma_s\mathrm{d}s.
\end{aligned}
\end{equation}
   \end{proposition}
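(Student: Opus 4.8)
The plan is to reduce Proposition~\ref{multipf2} to Proposition~\ref{multipf1} applied to the product $u\sigma$. The crux is therefore to verify that $u\sigma\in{\cal L}^{1,2,1-}$ and to identify $(D^-(u\sigma))_s$ with $(D^-u)_s\sigma_s$; once this is done, \eqref{equ_for2} is immediate from \eqref{equ_for1} with $u$ replaced by $u\sigma$, since the It\^o-integral term is unchanged notationally and the drift term becomes $\int_0^t(D^-u)_s\sigma_s\,\mathrm{d}s$.

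First I would establish that $u\sigma\in{\cal L}^{1,2}=D^{1,2}(\Omega;H)$. Because $\sigma$ is ${\cal F}_t$-adapted and, together with its Malliavin derivative $D_s\sigma_t$, bounded, and $u$ is $L^2$-bounded in ${\cal L}^{1,2}$, the product rule for the Malliavin derivative (Lemma~\ref{chainrule} or the product-rule form in Appendix~\ref{appM}) gives $D_s(u_t\sigma_t)=(D_su_t)\sigma_t+u_t(D_s\sigma_t)$, and the boundedness hypotheses on $\sigma$ and $D\sigma$ together with the $L^2$-boundedness of $u$ and $Du$ make the right-hand side square-integrable on $\Omega\times[0,T]^2$; hence $u\sigma\in{\cal L}^{1,2}$, with the stated expression for its derivative. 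Next, to show $u\sigma\in{\cal L}^{1,2,1-}$ I would plug this derivative into the defining limit of Definition~\ref{ddd} with $q=1$: I need
\[
\lim_{\varepsilon\rightarrow 0^+}\int_0^T\sup_{(s-\varepsilon)\vee 0\le t<s}\mathbb{E}\bigl(|D_s(u_t\sigma_t)-(D^-u)_s\sigma_s|\bigr)\,\mathrm{d}s=0.
\]
Writing $D_s(u_t\sigma_t)-(D^-u)_s\sigma_s=\bigl(D_su_t-(D^-u)_s\bigr)\sigma_t+(D^-u)_s(\sigma_t-\sigma_s)+u_t(D_s\sigma_t)$, I would bound the three pieces separately: the first by $\|\sigma\|_\infty$ times the corresponding supremum for $u$, which tends to $0$ in the $\int_0^T\sup\mathbb{E}(|\cdot|)$ sense because $u\in{\cal L}^{1,2,2-}\subset{\cal L}^{1,2,1-}$ (by Jensen/Cauchy–Schwarz, $q=2$ control implies $q=1$ control); the second by dominated convergence using that $(D^-u)_s\in L^2$ (hence $L^1$) and that $\sigma$ is left-continuous in $L^2(\Omega)$, so $\mathbb{E}|(D^-u)_s(\sigma_t-\sigma_s)|\le \|(D^-u)_s\|_{L^2}\sup_{(s-\varepsilon)\vee0\le t<s}\|\sigma_t-\sigma_s\|_{L^2}\to0$ for a.e.\ $s$ and stays dominated by an integrable function; and the third by noting that $|u_t(D_s\sigma_t)|$ is uniformly bounded times $|u_t|$, and — here I would use left-continuity more carefully — that in fact $D_s\sigma_t$ for $t<s$ must vanish in the relevant limit because $\sigma$ is ${\cal F}_t$-adapted, so $\sigma_t$ is ${\cal F}_t$-measurable and $D_s\sigma_t=0$ for $s>t$; thus for $t<s$ the term $u_t(D_s\sigma_t)$ is identically zero, killing the third piece outright.

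The main obstacle I anticipate is precisely the bookkeeping in that last observation and its interaction with the supremum over $t\in[(s-\varepsilon)\vee0,s)$: since we always have $t<s$ in the defining limit of $D^-$, adaptedness of $\sigma$ forces $D_s\sigma_t=0$ there, which is what makes the product rule collapse to $D_s(u_t\sigma_t)=(D_su_t)\sigma_t$ on the relevant region and lets the identification $(D^-(u\sigma))_s=(D^-u)_s\sigma_s$ go through cleanly. I would want to state this carefully (perhaps citing the local-time-zero property of the Malliavin derivative of an adapted process, or the fact that $D^{1,2}$-adapted processes have $D_s\sigma_t=0$ a.e.\ on $\{s>t\}$, see Appendix~\ref{appM}). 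The remaining subtlety is checking $L^2$-boundedness of $u\sigma$ (immediate from $\|\sigma\|_\infty<\infty$) so that, if desired, the conclusion can be iterated, and confirming that all hypotheses of Proposition~\ref{multipf1} for $u\sigma$ — namely $u\sigma\in{\cal L}^{1,2,1-}$ — are met, which is exactly what the above paragraph delivers. Then \eqref{equ_for2} follows by applying \eqref{equ_for1} to $u\sigma$ and substituting $(D^-(u\sigma))_s=(D^-u)_s\sigma_s$.
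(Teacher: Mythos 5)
Your proposal is correct and follows essentially the same route as the paper: the paper's proof simply invokes Proposition \ref{multipf1} together with Lemma \ref{utimessigma2} and Remark \ref{remsigma}, and your inline argument (product rule for $D_s$, the three-term decomposition of $D_s(u_t\sigma_t)-(D^-u)_s\sigma_s$, and the observation that adaptedness of $\sigma$ forces $D_s\sigma_t=0$ for $t<s$ so that $(D^-\sigma)_s=0$) is exactly the content of those two auxiliary results.
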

   
   \begin{remark}\label{introres2}
In \cite{Leon00}, the condition which makes (\ref{equ_for2}) hold requires the introduction of some other spaces and norms, and the proof is too complicated. Moreover, the space ${\cal L}^{1,2,C}$ is restrictive. 
   \end{remark}
    
       The It\^{o} formula for the forward integral was first proved in \cite{Russo00} without Malliavin calculus. Here we use the It\^{o} formula for the Skorohod integral (Theorem \ref{itos}) to obtain it, which can be viewed as an extension of \cite{Russo00}. Denote by ${\cal L}^f$ the linear space of processes $u\in {\cal L}^{2,2}\cap {\cal L}^{1,4}\cap {\cal L}^{1,2,1-}$ left-continuous in $L^2(\Omega)$, $L^2$-bounded, and such that $(D^-u)\in {\cal L}^{1,2}$. Then we have the following theorem.

     \begin{theorem}\label{itofor}
  Consider a process of the form $X_t=X_0+\int_0^tu_s\mathrm{d}^-W_s+\int_0^tv_s\mathrm{d}s$, where $X_0\in D^{1,2}_{\text{loc}}(\Omega)$, $u\in {\cal L}^{f}_{\text{loc}}$, and $v\in {\cal L}^{1,2}_{\text{loc}}$. Then $X_t$ is continuous and $X\in {\cal L}^{1,2,2}_{\text{loc}}$. Moreover, if $f\in C^2 (\mathbb{R})$, then $f'(X_t)u_t\in {\cal L}_{\text{loc}}^{1,2,1^-}$ and we have 
   \begin{equation}\label{equitos}
\begin{aligned}
f(X_t)=f(X_0)+\int_0^tf'(X_s)u_s\mathrm{d}^-W_s+\int_0^tf'(X_s)v_s\mathrm{d}s+\frac{1}{2}\int_0^tf''(X_s)u_s^2\mathrm{d}s.
\end{aligned}
\end{equation}
 \end{theorem}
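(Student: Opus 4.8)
The plan is to reduce the forward-integral Itô formula (Theorem~\ref{itofor}) to the already-established Skorohod Itô formula (Theorem~\ref{itos}) by passing through the conversion identity of Proposition~\ref{multipf1}. First I would use the hypotheses on $u$: since $u\in{\cal L}^f_{\text{loc}}\subset{\cal L}^{1,2,1-}_{\text{loc}}$, Proposition~\ref{multipf1} (localized) lets me rewrite the forward integral $\int_0^t u_s\,\mathrm{d}^-W_s$ as the Skorohod integral $\int_0^t u_s\,\mathrm{d}W_s$ plus the Lebesgue correction term $\int_0^t(D^-u)_s\,\mathrm{d}s$. Hence $X_t=X_0+\int_0^t u_s\,\mathrm{d}W_s+\int_0^t\big(v_s+(D^-u)_s\big)\,\mathrm{d}s$, i.e. $X$ has a Skorohod-semimartingale representation with drift $\tilde v:=v+(D^-u)$. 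I must then check this representation meets the hypotheses of Theorem~\ref{itos}: $X_0\in D^{1,2}_{\text{loc}}(\Omega)$ is given; $u\in({\cal L}^{2,2}\cap{\cal L}^{1,4})_{\text{loc}}$ holds since ${\cal L}^f\subset{\cal L}^{2,2}\cap{\cal L}^{1,4}$ by definition; and $\tilde v\in{\cal L}^{1,2}_{\text{loc}}$ because $v\in{\cal L}^{1,2}_{\text{loc}}$ by assumption and $(D^-u)\in{\cal L}^{1,2}$ by the definition of ${\cal L}^f$. So Theorem~\ref{itos} applies, giving continuity of $X$, the membership $X\in{\cal L}^{1,2,2}_{\text{loc}}$, and $f'(X_t)u_t\in{\cal L}^{1,2}_{\text{loc}}$, together with
\begin{equation*}
f(X_t)=f(X_0)+\int_0^t f'(X_s)u_s\,\mathrm{d}W_s+\int_0^t f'(X_s)\tilde v_s\,\mathrm{d}s+\frac12\int_0^t f''(X_s)u_s^2\,\mathrm{d}s+\int_0^t f''(X_s)(D^-X)_s u_s\,\mathrm{d}s.
\end{equation*}

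Next I would identify the Malliavin-trace term $(D^-X)_s$ in terms of the data. Since $X_t=X_0+\int_0^t u\,\mathrm{d}W+\int_0^t\tilde v\,\mathrm{d}s$ with $u\in({\cal L}^{2,2}\cap{\cal L}^{1,4})_{\text{loc}}$ and $\tilde v\in{\cal L}^{1,2}_{\text{loc}}$, Lemma~\ref{sko2} (invoked already inside Theorem~\ref{itos}) not only shows $X\in{\cal L}^{1,2,2}_{\text{loc}}$ but also yields the expression for $D_r X_s$ and hence for its one-sided trace. The standard computation gives $D_r X_s=D_rX_0+\int_r^s D_r u_\tau\,\mathrm{d}W_\tau+u_r+\int_r^s D_r\tilde v_\tau\,\mathrm{d}\tau$ for $r<s$ (the jump $u_r$ coming from differentiating the stochastic integral through its upper-limit dependence), so letting $r\uparrow s$ in $L^2(\Omega)$ produces $(D^-X)_s=u_s$ — the "past" limit of the integrand-at-the-diagonal, with the iterated-integral terms vanishing in the limit and $(D^-X_0)$, $(D^-\tilde v)$-type contributions being killed by the diagonal. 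Substituting $(D^-X)_s=u_s$ turns the last two terms of the displayed identity into $\frac12\int_0^t f''(X_s)u_s^2\,\mathrm{d}s+\int_0^t f''(X_s)u_s^2\,\mathrm{d}s=\frac32\int_0^t f''(X_s)u_s^2\,\mathrm{d}s$ — which is the wrong constant, so in fact the correct reading must be that the forward correction on the $f'$ term exactly cancels one copy; I would instead re-expand $\int_0^t f'(X_s)\tilde v_s\,\mathrm{d}s=\int_0^t f'(X_s)v_s\,\mathrm{d}s+\int_0^t f'(X_s)(D^-u)_s\,\mathrm{d}s$ and then convert $\int_0^t f'(X_s)u_s\,\mathrm{d}W_s+\int_0^t f'(X_s)(D^-u)_s\,\mathrm{d}s$ back to a forward integral.

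That backward conversion is the crux. To apply Proposition~\ref{multipf1} in reverse to the process $Y_s:=f'(X_s)u_s$ I need $Y\in{\cal L}^{1,2,1-}_{\text{loc}}$ with $(D^-Y)_s=f''(X_s)(D^-X)_s u_s+f'(X_s)(D^-u)_s$. Granting $(D^-X)_s=u_s$, this reads $(D^-Y)_s=f''(X_s)u_s^2+f'(X_s)(D^-u)_s$. Proposition~\ref{multipf1} then gives $\int_0^t Y_s\,\mathrm{d}^-W_s=\int_0^t Y_s\,\mathrm{d}W_s+\int_0^t(D^-Y)_s\,\mathrm{d}s$, i.e.
\begin{equation*}
\int_0^t f'(X_s)u_s\,\mathrm{d}^-W_s=\int_0^t f'(X_s)u_s\,\mathrm{d}W_s+\int_0^t f''(X_s)u_s^2\,\mathrm{d}s+\int_0^t f'(X_s)(D^-u)_s\,\mathrm{d}s.
\end{equation*}
Plugging this into the Skorohod Itô expansion, the two $\int_0^t f''(X_s)u_s^2\,\mathrm{d}s$ contributions combine with the $\frac12$ term and one cancels against the term coming from the conversion, and the $\int_0^t f'(X_s)(D^-u)_s\,\mathrm{d}s$ pieces cancel, leaving exactly $f(X_t)=f(X_0)+\int_0^t f'(X_s)u_s\,\mathrm{d}^-W_s+\int_0^t f'(X_s)v_s\,\mathrm{d}s+\frac12\int_0^t f''(X_s)u_s^2\,\mathrm{d}s$, which is the claim; the claimed membership $f'(X_t)u_t\in{\cal L}^{1,2,1-}_{\text{loc}}$ is exactly what is needed to run this reverse conversion, so it must be proved first.

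The main obstacles I anticipate are two. First, establishing $(D^-X)_s=u_s$ rigorously: one must justify differentiating the Skorohod integral $\int_0^s u_\tau\,\mathrm{d}W_\tau$ under $D_r$ and taking the $L^2$ limit as $r\uparrow s$, controlling the iterated-integral remainder; this is where the ${\cal L}^{2,2}\cap{\cal L}^{1,4}$ regularity of $u$ (rather than mere ${\cal L}^{1,2}$) and the localization sequence are used, and it is essentially the content of Lemma~\ref{sko2} applied on each $\Omega_n$. Second, and more delicate, showing $Y=f'(X)u\in{\cal L}^{1,2,1-}_{\text{loc}}$ with the product/chain rule for $D^-$: I would compute $D_rY_s=f''(X_s)(D_rX_s)u_s+f'(X_s)D_ru_s$ via the chain rule (Lemma~\ref{chainrule}), then verify the defining $L^1$-limit in Definition~\ref{ddd} by splitting into the two summands, using left-continuity in $L^2(\Omega)$ and $L^2$-boundedness from the definition of ${\cal L}^f$ to pass $\sup_{(s-\varepsilon)\vee 0\le t<s}\mathbb E|\cdot|$ to zero — the $f''(X_s)(D^-X)_su_s$ term needs $(D^-X)_s=u_s$ and the continuity of $X$, while the $f'(X_s)(D^-u)_s$ term needs $u\in{\cal L}^{1,2,1-}$ and boundedness of $f'(X_s)$ along the localization (which follows since $f\in C^2$ and $X$ is continuous, hence locally bounded). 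Everything else — matching constants, cancellation of Lebesgue terms — is bookkeeping once these two regularity facts are in hand.
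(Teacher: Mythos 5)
Your overall strategy is exactly the paper's: localize, use Proposition \ref{multipf1} to rewrite the forward integral as a Skorohod integral plus the correction $\int_0^t(D^-u)_s\,\mathrm{d}s$, apply Theorem \ref{itos}, establish $f'(X)u\in{\cal L}^{1,2,1-}$ together with the product rule $(D^-(f'(X)u))_s=f'(X_s)(D^-u)_s+f''(X_s)(D^-X)_su_s$, and then convert the Skorohod integral of $f'(X)u$ back into a forward integral, whereupon the extra Lebesgue terms cancel. That skeleton is correct and is precisely what the paper does.

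However, one intermediate claim you lean on is false: the identification $(D^-X)_s=u_s$. In Definition \ref{ddd} the minus-trace is the limit of $D_sX_t$ as $t\uparrow s$ (derivative index \emph{above} the process index), so the diagonal term $u_s1_{[0,t]}(s)$ coming from differentiating the stochastic integral vanishes because $s>t$; it is the plus-trace that picks it up. Lemma \ref{sko2} states this explicitly: $(D^-X)_s=D_sX_0+\int_0^sD_su_r\,\mathrm{d}W_r+\int_0^sD_s\tilde v_r\,\mathrm{d}r$, while $(D^+X)_s$ is the same expression plus $u_s$. The computation in your proposal (taking $D_rX_s$ with $r<s$ and letting $r\uparrow s$) is the plus-configuration, mislabelled as the minus one, and the ``wrong constant $\tfrac32$'' you ran into is the symptom. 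Fortunately the identification is never needed: in the reverse conversion the term $\int_0^tf''(X_s)(D^-X)_su_s\,\mathrm{d}s$ produced by Theorem \ref{itos} cancels \emph{symbolically} against the identical term inside $\int_0^t(D^-(f'(X)u))_s\,\mathrm{d}s$, whatever $(D^-X)_s$ actually is; this is how the paper's proof closes. So delete the claim $(D^-X)_s=u_s$ and the corresponding ``obstacle,'' keep the general product formula for $D^-(f'(X)u)$, and the bookkeeping goes through. The remaining genuine work --- verifying $f'(X)u\in{\cal L}^{1,2,1-}$ using the boundedness supplied by localization, the left-continuity of $u$ in $L^2(\Omega)$, and $X\in{\cal L}^{1,2,2}$ --- is correctly identified in your last paragraph and is carried out in the paper along the lines of Lemmas \ref{utimessigma1} and \ref{utimessigma2}.
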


\section{Model formulation}
 \label{Sec:model}
We assume that all uncertainties come from the fixed filtered probability space $(\Omega,{\cal F},\{{\cal F}_t\}_{t\ge 0},{\mathbb P})$ and a standard Brownian motion $W$ in our model, where $\{{\cal F}_t\}_{t\ge0}$ is the $\mathbb{P}$-augmentation of  the filtration generated by $W$ and ${\cal F}={\cal F}_\infty$. Fix a terminal time $T>0$. Suppose all filtrations given in this section satisfy the usual condition. 

Consider an investor who can invest in the  financial market containing a risk-free asset (bond) $B$ and a risky asset (stock) $S$. The price processes of the two assets are governed by the following anticipating SDEs
\begin{eqnarray} 
  \left\{ \begin{aligned}&\mathrm{d}B_t=r_tB_t\mathrm{d}t,\quad 0\le t\le T,
  \\&   \mathrm{d}S_t=   \mu(t,\pi_t)S_{t}\mathrm{d}t+\sigma_tS_{t}\mathrm{d}^- W_t  , \quad 0\le t\le T,\end{aligned} \right.   
  \label{fin-m}
\end{eqnarray}
with constant initial values $1$ and $S_0>0$, respectively. Here, the coefficients $r_t, {\mu}(t,x)$ for fixed $x\in\mathbb{R}$, and $\sigma_t$, $t\in [0,T]$, are all ${\cal F}_t$-adapted measurable stochastic processes, and $\mu(t,\cdot)$ is $C^1$ for every $t\in[0,T]$. Assume the investor is a large investor and has access to insider information  characterized by anothet filtration $\{{\cal H}_t\}_{0\le t\le T}$ such that
\begin{equation}
{\cal F}_t \subset {\cal H}_t, \quad0\le t\le T.
\end{equation}
 The portfolio strategy she takes could influence the mean rate of return $\mu$ of the risky asset. Thus, $\mu$ partly depends on her portfolio strategy $\pi$ (see \cite{Kohatsu06, DiNunno09}). Here, $\pi_t$ is defined as an ${\cal H}_t$-adapted c\`{a}gl\`{a}d process satisfying $\pi\in {\cal L}^{1,2,2-}$ and is $L^2$-bounded. It represents the proportion of the investor's total wealth $X_t$ invested in the risky asset $S_t$ at time $t$. Since $\mu(t,\pi_t)$ is not adapted to $\{{\cal F}_t\}_{0\le t\le T}$, the stochastic integral in (\ref{fin-m}) should be interpreted as the forward integral.

 We make some assumptions on the coefficients as follows:
 
\begin{itemize}
 \item $r\in {\cal L}^{1,2}$. For each portfolio strategy $\pi$, $\mu(\cdot,\pi)-\frac{1}{2}\sigma^2\in {\cal L}^{1,2}$. $\sigma\ge \epsilon>0$ for some positive constant $\epsilon $, and $\sigma\in {\cal L}^{f}$;
\end{itemize}
 \begin{itemize}
 \item  $\sigma$ and $D_s\sigma_t$ are bounded.
  \end{itemize}

Under the above conditions, we can solve the anticipating SDEs (\ref{fin-m}) by using the It\^{o} formula for forward integrals (Theorem \ref{itofor}). Moreover, the process of the risky asset $S$ is given by
 \begin{equation}
 \begin{aligned}
S_t=S_0\exp \Bigg\{   &   \int_0^t  \left(  \mu(s,\pi_s)-\frac{1}{2} \sigma^2_s \right) \mathrm{d}s+\int_0^t \sigma_s \mathrm{d}W_s \Bigg\}, \quad 0\le t\le T,
\end{aligned}
 \end{equation}
 which is no more an ${\cal F}_t$-semimartingale, but an ${\cal H}_t$-adapted process.

Note that the portfolio strategy $\pi$ of the investor can take negative values, which is to be interpreted as short-selling the risky asset. Then her wealth process $X^\pi$ corresponding to $\pi$ is governed by the following anticipating SDE (see \cite{Nualart06}):
 \begin{equation}\label{wealthsde}
 \begin{aligned}
\mathrm{d}X^\pi_t=&\left[r_t+(\mu(t,\pi_t)-r_t)\pi_t \right]X^\pi_{t}\mathrm{d}t+ \sigma_t\pi_t X^\pi_{t}\mathrm{d}^-W_t, \quad 0\le t\le T,
\end{aligned}
 \end{equation}
with constant initial value $X_0>0$. We can apply the It\^{o} formula for forward integrals to solve the anticipating SDE (\ref{wealthsde}). Before that, we impose the following admissible conditions on $\pi$.

\begin{definition}\label{admiss}
We define ${\cal A}_1$ as the set of all portfolio strategies $ \pi $ satisfying the following conditions:
  \begin{itemize}
 \item[(i)] $\sigma\pi\in{\cal L}^{f}$; 
 \end{itemize}
 \begin{itemize}
\item[(ii)]  $(\mu(\cdot,\pi)-r)\pi-\frac{1}{2}\sigma^2\pi^2\in{\cal L}^{1,2}$;
\end{itemize}
 \begin{itemize}
\item[(iii)]  $\int_0^T|r_t+(\mu(t,\pi_t)-r_t)\pi_t|\mathrm{d}t+\int_0^T|\sigma_t\pi_t|^2\mathrm{d}t<\infty$.
\end{itemize}
\end{definition}

Let $u\in{\cal A}_1$. By Theorem \ref{itofor}, the solution of (\ref{wealthsde}) is given by
 \begin{equation}\label{equofxt}
 \begin{aligned}
X^\pi_t=X_0\exp \Bigg\{   &   \int_0^t  \left[ r_s+(\mu(s,\pi_s)-r_s)\pi_s-\frac{1}{2}\sigma_s^2 \pi_s^2 \right] \mathrm{d}s+\int_0^t \sigma_s\pi_s \mathrm{d}^-W_s \Bigg\}, \quad 0\le t\le T.
\end{aligned}
 \end{equation}

Consider a model uncertainty setup. Assume that the investor is ambiguity averse, implying that she is concerned about the accuracy of statistical estimation, and possible misspecification errors. Thus, a family of parametrized prior probability measures $\{{{\cal Q}}^\theta\}$ equivalent to the original probability measure $\mathbb{P}$ is assumed to be exist in the real world. However, since the investor has insider information filtration $\{{\cal H}_t\}$ under which $W$ might not be a semimartingale, a generalization for the construction of $\{{{\cal Q}}^\theta\}$ need to be considered by means of the forward integral. 

\begin{definition}\label{admiss_1}
We define ${\cal A}_2$ as the set of all ${\cal H}_t$-adapted c\`{a}gl\`{a}d processes $\theta_t $, $t\in [0,T]$, satisfying the following conditions:
\begin{itemize}
 \item[(i)] $\theta \in{\cal L}^{1,2,1-}$;
 \end{itemize}
 \begin{itemize}
\item[(ii)]   $\int_0^t\theta_s\mathrm{d}^-W_s$ is a continuous ${\cal H}_t$-semimartingale, the local martingale part (in the canonical decomposition) of which satisfies the Novikov condition (see \cite{Karatzas91}).
\end{itemize}
\end{definition}

For $\theta\in {\cal A}_2$, the Dol\'{e}ans-Dade exponential $\varepsilon^\theta_t$ is the unique ${\cal H}_t$-martingale  governed by 
 \begin{equation}\label{theta_sde}
 \begin{aligned}
 \varepsilon^\theta_t=1+\varepsilon^\theta_{t}\left(  \int_0^t\theta_s\mathrm{d}^-W_s \right)^M, \quad 0\le t\le T,
\end{aligned}
 \end{equation}
where $(\cdot)^M$ denotes the local martingale part of a continuous semimartingale. Thus, we have $\varepsilon^\theta_T>0$ and $\int_\Omega\varepsilon^\theta_T\mathrm{d}\mathbb{P}=1$, which induces a probability ${{\cal Q}}^\theta$ equivalent to $\mathbb{P}$ such that $ \frac{\mathrm{d}{\cal Q}^\theta}{\mathrm{d}\mathbb{P}} =\varepsilon^\theta_T$. Then all such ${\cal Q}^\theta$ form a set of prior probability measures $\{{\cal Q}^\theta\}_{\theta\in{\cal A}_2}$.

Taking into account the extra insider information and model uncertainty, the optimization problem for the investor can be formulated as a (zero-sum) anticipating stochastic differential game. In other words, we need to solve the following problem.

\begin{problem}\label{sdg}
 Select a pair $(\pi^*,\theta^*)\in {\cal A}_1'\times {\cal A}_2'$ (see Definition \ref{admiss12} below) such that
\begin{equation}\label{max_u}
 V:=J(\pi^*,\theta^*)= \inf_{\theta\in{\cal A}_2'}\sup_{\pi\in {\cal A}_1'}  J(\pi,\theta)=\sup_{\pi\in {\cal A}_1'} \inf_{\theta\in{\cal A}_2'} J(\pi,\theta) ,
\end{equation}
where the performance function is given by
\begin{equation*}
J(\pi,\theta):=\mathbb{E}_{{\cal Q}^\theta}\left[   \ln X_T^\pi +\int_0^Tg(\theta_s)\mathrm{d}s\right]=\mathbb{E}\left[  \varepsilon_T^\theta \ln X_T^\pi +\int_0^T \varepsilon_s^\theta g(\theta_s)\mathrm{d}s\right],
\end{equation*}
 the penalty function $g: \mathbb{R}\rightarrow \mathbb{R}$ is a Fr\'{e}chet differentiable convex function. We call $V$ the value (or the optimal expected utility under the worst-case probability) of Problem \ref{sdg}.
\end{problem}

\begin{definition}\label{admiss12}
Define ${\cal A}_1'$ as the subset of $ {\cal A}_1$ such that $\mathbb{E} |\ln X_T^\pi|^2<\infty$ for all $u\in{\cal A}_1'$. Define ${\cal A}_2'$ as the subset of ${\cal A}_2$ such that $\mathbb{E}\left[ |{\varepsilon_T^\theta}|^2  + \int_0^T|g(\theta_s)|^2\mathrm{d}s \right]<\infty$ for all $\theta\in {\cal A}_2'$.
\end{definition}
 
 \begin{remark}\label{aremarkofl2m}
For $\theta\in{\cal A}_2'$, $\left|\varepsilon_t^\theta\right|^2 $ is an ${\cal H}_t$-submartingale and $\mathbb{E} |\varepsilon_t^\theta|^2\le \mathbb{E} |\varepsilon_T^\theta |^2<\infty$ for all $t\in[0,T]$ by the Jensen inequality. 
 \end{remark}

\section{A half characterization of the robust optimal portfolio}
 \label{half_section}

 \begin{assumption}\label{assump1}
If $(\pi^*,\theta^*)\in {\cal A}_1'\times {\cal A}_2'$ is optimal for Problem \ref{sdg}, then for all bounded $\alpha \in {\cal A}_1' $, there exists some $\delta>0$ such that $ \pi^*+y\alpha\in {\cal A}_1' $ for all $|y|<\delta$. Moreover, the following family of random variables 
  \begin{equation*}
 \begin{aligned}
\left \{  \varepsilon^{\theta^* }_T (X_T^{\pi^*+y\alpha})^{-1} \frac{\mathrm{d}}{\mathrm{d}y}X_T^{\pi^*+y\alpha}   \right\}_{y\in(-\delta,\delta)}
 \end{aligned}
 \end{equation*}
is $\mathbb{P}$-uniformly integrable, where $\frac{\mathrm{d}}{\mathrm{d}y}$ means that $\frac{\mathrm{d}}{\mathrm{d}y}X_T^{\pi^*+y\alpha}$ exists and the interchange of differentiation and integral with respect to $\ln X_T^{\pi^*+y\alpha}$ in (\ref{equofxt}) is justified.
 \end{assumption}
 
  \begin{assumption}\label{assump2}
  Let $\alpha_s= \vartheta  1_{(t,t+h]}(s) $, $0\le s\le T$, for fixed $0\le t<t+h\le T$, where $\vartheta$ is an ${\cal H}_t$-measurable bounded random variable in $D^{\infty,\infty}(\Omega)$. Then $\alpha \in{\cal A}_1'$.
  \end{assumption}
  
  \begin{theorem}\label{mainth1}
Suppose $(\pi^*,\theta^*)\in {\cal A}_1'\times {\cal A}_2'$ is optimal for Problem \ref{sdg} under Assumptions \ref{assump1} and \ref{assump2}. Then the following stochastic process
    \begin{equation}\label{equ1ofth1}
 \begin{aligned}
m_t^{\pi^*}:=&\int_0^t   \left(\mu(s,\pi_s^*)-r_s+\frac{\partial}{\partial x}\mu(s,\pi^*_s)\pi^*_s-\sigma^2_s\pi^*_s \right)\mathrm{d}s+\int_0^t \sigma_s\mathrm{d}W_s,\quad 0\le t\le T,
\end{aligned}
 \end{equation}
is an $({\cal H}_t,{{\cal Q}}^{\theta^*})$-martingale.
  \end{theorem}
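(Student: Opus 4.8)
The plan is to exploit that a saddle point $(\pi^*,\theta^*)$ makes $\pi^*$ a maximizer of $\pi\mapsto J(\pi,\theta^*)$, so that the directional (G\^ateaux) derivative of $J(\cdot,\theta^*)$ at $\pi^*$ along any admissible direction vanishes. For the direction $\alpha_s=\vartheta\,1_{(t,t+h]}(s)$ of Assumption \ref{assump2}, Assumption \ref{assump1} makes $\pi^*+y\alpha\in\mathcal A_1'$ for $|y|<\delta$, so $y\mapsto J(\pi^*+y\alpha,\theta^*)$ attains an interior maximum at $y=0$ and, since the $\int_0^T\varepsilon_s^{\theta^*}g(\theta_s^*)\,\mathrm d s$ term does not involve $\pi$,
\[
0=\frac{\mathrm d}{\mathrm d y}\,J(\pi^*+y\alpha,\theta^*)\Big|_{y=0}=\mathbb E\!\left[\varepsilon_T^{\theta^*}\,\frac{\mathrm d}{\mathrm d y}\ln X_T^{\pi^*+y\alpha}\Big|_{y=0}\right],
\]
where the interchange of $\tfrac{\mathrm d}{\mathrm d y}$, $\mathbb E$, and the time integrals is exactly what Assumption \ref{assump1} provides, together with linearity of $\delta^-$ on $\mathcal L^{1,2,1-}_{\mathrm{loc}}$ (cf.\ Proposition \ref{multipf1}).

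First I would differentiate the exponent of \eqref{equofxt}; using that $\mu(s,\cdot)\in C^1$, this yields
\[
\frac{\mathrm d}{\mathrm d y}\ln X_T^{\pi^*+y\alpha}\Big|_{y=0}=\int_0^T\!\Big(\mu(s,\pi_s^*)-r_s+\tfrac{\partial}{\partial x}\mu(s,\pi_s^*)\pi_s^*-\sigma_s^2\pi_s^*\Big)\alpha_s\,\mathrm d s+\int_0^T\sigma_s\alpha_s\,\mathrm d^-W_s .
\]
Inserting $\alpha_s=\vartheta\,1_{(t,t+h]}(s)$, I would pull the $s$-independent $\vartheta$ out of both integrals (out of $\delta^-$ directly from its defining limit), and use that $\sigma$ is $\mathcal F_s$-adapted so that $\int_t^{t+h}\sigma_s\,\mathrm d^-W_s=\int_t^{t+h}\sigma_s\,\mathrm d W_s$ by Remark \ref{extensionoffor}; then the right-hand side becomes $\vartheta\,(m_{t+h}^{\pi^*}-m_t^{\pi^*})$ with $m^{\pi^*}$ exactly the process of \eqref{equ1ofth1}. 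Since $\varepsilon_T^{\theta^*}=\mathrm d\mathcal Q^{\theta^*}/\mathrm d\mathbb P$, the vanishing derivative reads
\[
\mathbb E_{\mathcal Q^{\theta^*}}\!\big[\vartheta\,(m_{t+h}^{\pi^*}-m_t^{\pi^*})\big]=0
\]
for every such $\vartheta$ and every $0\le t<t+h\le T$.

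To conclude I would (i) record the bookkeeping: $m_t^{\pi^*}$ is $\mathcal H_t$-measurable (the drift is built from the $\mathcal H$-adapted $\pi^*$ and the $\mathcal F$-adapted $r,\sigma$, and $\int_0^t\sigma\,\mathrm d W$ is $\mathcal F_t$-measurable), and $m_t^{\pi^*}\in L^1(\mathcal Q^{\theta^*})$ because $\sigma$ is bounded, the admissibility conditions of $\mathcal A_1'$ bound each drift term through $\ln X_T^{\pi^*}\in L^2(\mathbb P)$, and $\varepsilon_T^{\theta^*}\in L^2(\mathbb P)$, so Cauchy--Schwarz applies; and (ii) extend the test class: once the identity above is known for the Malliavin-smooth $\mathcal H_t$-measurable $\vartheta$ of Assumption \ref{assump2}, an approximation/density argument in $L^2(\Omega,\mathcal H_t,\mathcal Q^{\theta^*})$ upgrades it to all bounded $\mathcal H_t$-measurable $\vartheta$, and testing against indicators of sets in $\mathcal H_t$ then gives $\mathbb E_{\mathcal Q^{\theta^*}}[m_{t+h}^{\pi^*}-m_t^{\pi^*}\mid\mathcal H_t]=0$; as $0\le t<t+h\le T$ are arbitrary, $m^{\pi^*}$ is an $(\mathcal H_t,\mathcal Q^{\theta^*})$-martingale.

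I expect the main obstacle to be the anticipating-calculus manipulations rather than the (essentially assumed-away) differentiation under the integral: justifying, inside the spaces $\mathcal L^f$ and $\mathcal L^{1,2,1-}$, both the factoring of the $\mathcal H_t$-measurable (generally non-$\mathcal F_t$-measurable) $\vartheta$ out of $\int\,\mathrm d^-W$ and the replacement of $\int\sigma\,\mathrm d^-W$ by $\int\sigma\,\mathrm d W$; and then the density step that turns smooth $\mathcal H_t$-test functions into arbitrary ones, which is where the genuinely enlarged, possibly non-Malliavin-differentiable, structure of $\mathcal H_t$ makes itself felt.
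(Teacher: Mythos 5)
Your proposal is correct and follows the same variational skeleton as the paper: perturb $\pi^*$ by $y\alpha$ with $\alpha_s=\vartheta 1_{(t,t+h]}(s)$, use Assumption \ref{assump1} to differentiate under the expectation at the interior maximum $y=0$, and read off the martingale property of $m^{\pi^*}$ from the vanishing of $\mathbb{E}_{\mathcal{Q}^{\theta^*}}[\vartheta(m^{\pi^*}_{t+h}-m^{\pi^*}_t)]$ for all admissible $\vartheta$. The one genuine divergence is in how the term $\int_0^T\alpha_s\sigma_s\,\mathrm{d}^-W_s$ is reduced to $\vartheta\int_t^{t+h}\sigma_s\,\mathrm{d}W_s$. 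The paper first converts the forward integral to a Skorohod integral plus the trace correction $\int_0^T(D^-\alpha)_s\sigma_s\,\mathrm{d}s$ via Proposition \ref{multipf2}, then applies the multiplication formula for the Skorohod integral (Lemma \ref{multidelta}) to pull $\vartheta$ out, at which point the term $-\int_t^{t+h}D_s\vartheta\,\sigma_s\,\mathrm{d}s$ produced by Lemma \ref{multidelta} cancels exactly against the trace correction. You instead factor $\vartheta$ directly out of the defining limit of the forward integral (multiplication by a fixed random variable commutes with convergence in probability, so, unlike for the Skorohod integral, no correction term arises) and then identify the forward integral of the $\mathcal{F}$-adapted integrand $1_{(t,t+h]}\sigma$ with the It\^{o} integral via Remark \ref{extensionoffor}. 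Your route is more elementary, makes the cancellation invisible rather than explicit, and does not actually use the Malliavin smoothness of $\vartheta$ at this step (it is still needed for $\alpha\in\mathcal{A}_1'$ through Assumption \ref{assump2}); the paper's route stays inside the Skorohod machinery it has set up and reuses it later. Your closing remarks on integrability of $m^{\pi^*}_t$ and on upgrading from smooth $\vartheta\in D^{\infty,\infty}(\Omega)$ to indicators $1_{A_t}$, $A_t\in\mathcal{H}_t$, address a density step that the paper passes over silently ("since this holds for all such $\vartheta$"); you are right to flag that this step is where the possibly non-Malliavin-differentiable structure of $\mathcal{H}_t$ could bite, but it is not a gap relative to the paper's own argument.
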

  
 \begin{proof}
 Suppose that the pair $(\pi^*,\theta^*)\in {\cal A}_1'\times{\cal A}_2'$ is optimal. Then for any bounded $\alpha \in {\cal A}_1' $ and $|y|<\delta$, we have $J(\pi^*+y\alpha,\theta^*)\le J(\pi^*,\theta^*) $, which implies that $y=0$ is a maximum point of the function $y\mapsto J(\pi^*+y\alpha,\theta^*)$. Thus, we have $\frac{\mathrm{d}}{\mathrm{d}y}J(\pi^*+y\alpha,\theta^*){|}_{y=0}=0$ once the differentiability is established. Thanks to Assumption \ref{assump1}, we can deduce by Proposition \ref{multipf2} that 
  \begin{equation}\label{euq_bianfen_1}
 \begin{aligned}
&\frac{\mathrm{d}}{\mathrm{d}y}J(\pi^*+y\alpha,\theta^*){|}_{y=0}\\
=&\mathbb{E}\Bigg\{  \varepsilon_T^{\theta^*} 
\Bigg[\int_0^T  \alpha_s\left(\mu(s,\pi_s^*)-r_s+\frac{\partial}{\partial x}\mu(s,\pi^*_s)\pi^*_s-\sigma^2_s\pi^*_s\right)\mathrm{d}s+\int_0^T\alpha_s\sigma_s\mathrm{d}^-W_s \Bigg]
\Bigg\}
\\=&\mathbb{E}\Bigg\{  \varepsilon_T^{\theta^*} 
\Bigg[\int_0^T  \alpha_s\left(\mu(s,\pi_s^*)-r_s+\frac{\partial}{\partial x}\mu(s,\pi^*_s)\pi^*_s-\sigma^2_s\pi^*_s\right)\mathrm{d}s+\int_0^T\alpha_s\sigma_s\mathrm{d}W_s +\int_0^T (D^-\alpha)_s\sigma_s\mathrm{d}s \Bigg]
\Bigg\}
\\=&0.
 \end{aligned}
 \end{equation}
 Now let us fix $0\le t<t+h\le T$. By Assumption \ref{assump2}, we can choose $\alpha \in{\cal A}_1'$ of the form
   \begin{equation*}
 \begin{aligned}
 \alpha_s=\vartheta 1_{(t,t+h]}(s),\quad 0\le s\le T,
   \end{aligned}
 \end{equation*}
 where $\vartheta\in D^{\infty,\infty}(\Omega)$ is an ${\cal H}_t$-measurable bounded random variable. Then we have $(D^-\vartheta 1_{(t,t+h]})_s=D_s\vartheta 1_{(t,t+h]}(s)$. Combining (\ref{euq_bianfen_1}) with Lemma \ref{multidelta} yields
    \begin{equation*}
 \begin{aligned}
0=\mathbb{E}_{{\cal Q}^{\theta^*}}\Bigg\{  \vartheta
\Bigg[&\int_t^{t+h}   \left(\mu(s,\pi_s^*)-r_s+\frac{\partial}{\partial x}\mu(s,\pi^*_s)\pi^*_s-\sigma^2_s\pi^*_s\right)\mathrm{d}s+\int_t^{t+h}\sigma_s\mathrm{d}W_s.
\Bigg]
\Bigg\}
   \end{aligned}
 \end{equation*}
 Since this holds for all such $\vartheta$,  we can conclude that
     \begin{equation*}
 \begin{aligned}
0=\mathbb{E}_{{\cal Q}^{\theta^*}}\left[ m^{\pi^*}_{t+h}-m^{\pi^*}_t\big{|}{\cal H}_t\right].
   \end{aligned}
 \end{equation*}
 Hence, $m^{\pi^*}_t$, $t\in[0,T]$, is an ${\cal H}_t$-martingale under the probability measure ${\cal Q}^{\theta^*}$. 
 \end{proof}

Moreover, we obtain the following result under the original probability measure $\mathbb P$. Unless otherwise stated, all statements are back to $\mathbb{P}$ from now on.

  \begin{theorem}\label{mainth2}
 Suppose $(\pi^*,\theta^*)\in {\cal A}_1'\times {\cal A}_2'$ is optimal for Problem \ref{sdg} under Assumptions \ref{assump1} and \ref{assump2}. Then the following stochastic process
  \begin{equation*}
 \begin{aligned}
\hat m_t^{\pi^*,\theta^*}:=m^{\pi^*}_t-\int_0^t \varepsilon^{\theta^*}_s\mathrm{d}\big\langle(\varepsilon^{\theta^*})^{-1}, m^{\pi^*}\big\rangle_s,\quad 0\le t\le T,
\end{aligned}
 \end{equation*}
  is an ${\cal H}_t$-local martingales. Here, $\langle \cdot,\cdot\rangle$ represents the covariance process (see \cite{Karatzas91}), and $m^{\pi^*}_t$ is given in Theorem \ref{mainth1}.
  \end{theorem}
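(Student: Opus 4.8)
The plan is to read Theorem~\ref{mainth2} as the Girsanov--Meyer change-of-measure formula for semimartingales, used to transport the martingale property of $m^{\pi^*}$ from $\mathcal{Q}^{\theta^*}$ back to the reference measure $\mathbb{P}$. By Theorem~\ref{mainth1}, $m^{\pi^*}$ is an $(\mathcal{H}_t,\mathcal{Q}^{\theta^*})$-martingale, hence in particular a $\mathcal{Q}^{\theta^*}$-local martingale, and it is continuous because by Proposition~\ref{locik} the stochastic integral $\int_0^{\cdot}\sigma_s\,\mathrm{d}W_s$ is a continuous $\mathcal{F}_t$-local martingale while the $\mathrm{d}s$-term in (\ref{equ1ofth1}) is absolutely continuous in $t$. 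Since $\mathcal{Q}^{\theta^*}\sim\mathbb{P}$, the semimartingale property is preserved, so $m^{\pi^*}$ is also a continuous $(\mathcal{H}_t,\mathbb{P})$-semimartingale; this is exactly the point where Theorem~\ref{mainth1} is genuinely needed, since a priori $W$ itself (and hence $m^{\pi^*}$) need not be an $\mathcal{H}_t$-semimartingale.

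First I would identify the density process relating $\mathbb{P}$ and $\mathcal{Q}^{\theta^*}$ on the filtration $\{\mathcal{H}_t\}$. As $\mathrm{d}\mathcal{Q}^{\theta^*}/\mathrm{d}\mathbb{P}=\varepsilon^{\theta^*}_T$ and $\varepsilon^{\theta^*}$ is a strictly positive continuous $(\mathcal{H}_t,\mathbb{P})$-martingale, the abstract Bayes rule yields $\mathbb{E}_{\mathcal{Q}^{\theta^*}}[(\varepsilon^{\theta^*}_T)^{-1}\mid\mathcal{H}_t]=(\varepsilon^{\theta^*}_t)^{-1}$, i.e.\ $(\varepsilon^{\theta^*})^{-1}$ is the density process of $\mathbb{P}$ with respect to $\mathcal{Q}^{\theta^*}$ and is a continuous $\mathcal{Q}^{\theta^*}$-martingale; by the Itô formula applied to $x\mapsto 1/x$ on $(0,\infty)$ it is also a continuous $(\mathcal{H}_t,\mathbb{P})$-semimartingale, so the covariation $\langle(\varepsilon^{\theta^*})^{-1},m^{\pi^*}\rangle$ is well defined, and being an in-probability limit of Riemann sums of products of increments it does not change under an equivalent change of measure.

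Then I would apply the Girsanov--Meyer theorem in the direction from $\mathcal{Q}^{\theta^*}$ to $\mathbb{P}$: if $N$ is a continuous $\mathcal{Q}^{\theta^*}$-local martingale and $Z$ denotes the density process of $\mathbb{P}$ with respect to $\mathcal{Q}^{\theta^*}$, then $N-\int_0^{\cdot}Z_s^{-1}\,\mathrm{d}\langle N,Z\rangle_s$ is a $\mathbb{P}$-local martingale. With $N=m^{\pi^*}$ and $Z=(\varepsilon^{\theta^*})^{-1}$, so that $Z_s^{-1}=\varepsilon^{\theta^*}_s$, this is precisely
\[
m^{\pi^*}_t-\int_0^t\varepsilon^{\theta^*}_s\,\mathrm{d}\big\langle(\varepsilon^{\theta^*})^{-1},m^{\pi^*}\big\rangle_s=\hat m_t^{\pi^*,\theta^*},
\]
where the Stieltjes integral is a genuine finite-variation process because $\varepsilon^{\theta^*}$ is continuous, hence locally bounded, while $\langle(\varepsilon^{\theta^*})^{-1},m^{\pi^*}\rangle$ has locally finite variation; this finishes the proof.

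The only real obstacle is the semimartingale bookkeeping just indicated: making sure $m^{\pi^*}$ is a bona fide $\mathcal{H}_t$-semimartingale under $\mathbb{P}$ and that every covariation and Stieltjes integral appearing in the statement is well posed, since the anticipating nature of the model forbids taking the $\mathcal{H}_t$-semimartingale property of $W$ for granted. If one prefers not to quote Girsanov--Meyer in its general form, an equivalent route is to write $\varepsilon^{\theta^*}=\mathcal{E}(N^{\theta^*})$ with $N^{\theta^*}_t=\big(\int_0^t\theta^*_s\,\mathrm{d}^-W_s\big)^M$ a continuous $\mathcal{H}_t$-local martingale (in fact a martingale, by the Novikov condition in Definition~\ref{admiss_1}), use $(\varepsilon^{\theta^*})^{-1}=\mathcal{E}({-}N^{\theta^*}+\langle N^{\theta^*}\rangle)$ to compute $\int_0^t\varepsilon^{\theta^*}_s\,\mathrm{d}\langle(\varepsilon^{\theta^*})^{-1},m^{\pi^*}\rangle_s=-\langle N^{\theta^*},m^{\pi^*}\rangle_t$, and then invoke the classical Girsanov theorem with $\mathbb{P}$-density $\mathcal{E}(N^{\theta^*})$ to conclude that $m^{\pi^*}+\langle N^{\theta^*},m^{\pi^*}\rangle$, which equals $\hat m^{\pi^*,\theta^*}$, is a $\mathbb{P}$-local martingale.
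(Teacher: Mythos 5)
Your proposal is correct and follows essentially the same route as the paper, which simply notes that since $m^{\pi^*}$ is an $(\mathcal{H}_t,\mathcal{Q}^{\theta^*})$-martingale by Theorem \ref{mainth1}, the conclusion is immediate from the Girsanov(--Meyer) theorem applied in the direction from $\mathcal{Q}^{\theta^*}$ back to $\mathbb{P}$ with density process $(\varepsilon^{\theta^*})^{-1}$. Your version merely spells out the Bayes-rule identification of the density process and the semimartingale bookkeeping that the paper leaves implicit.
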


\begin{proof}
If $(\pi^*,\theta^*)\in{\cal A}_1'\times {\cal A}_2'$ is optimal, then by Theorem \ref{mainth1} we know that $m^{\pi^*}_t$, $t\in [0,T]$, is an $({\cal H}_t,{\cal Q}^{\theta^*})$-martingale. The conclusion is an immediate result from the Girsanov theorem (see \cite{Protter05}).
 \end{proof}
 
 \begin{remark}
 Note that $\big\langle(\varepsilon^{\theta^*})^{-1}, m^{\pi^*}\big\rangle_t$ is an absolutely continuous process since the quadratic variation of $m_t^{\pi^*}$ is absolutely continuous. In fact, we have
 \begin{equation*}
\big\langle m^{\pi^*}\big\rangle_t=\int_0^t\sigma^2_s\mathrm{d}s, \quad 0\le t\le T.
 \end{equation*}
 \end{remark}

Further, since $\hat m^{\pi^*,\theta^*}_t$, $t\in [0,T]$, is an ${\cal H}_t$-local martingale, we can deduce from (\ref{equ1ofth1}) that $ \int_0^t \sigma_s\mathrm{d}W_s$ is a continuous ${\cal H}_t$-semimartingale. Using the fact that 
\begin{equation*}\begin{aligned}
  \int_0^t \sigma_s^{-1}\mathrm{d}\hat m_s^{\pi^*,\theta^*} =&W_t+\int_0^t   \sigma_s^{-1}\left(\mu(s,\pi_s^*)-r_s+\frac{\partial}{\partial x}\mu(s,\pi^*_s)\pi^*_s-\sigma^2_s\pi^*_s \right)\mathrm{d}s\\
  &-\int_0^t \sigma^{-1}_s\varepsilon^{\theta^*}_s\mathrm{d}\big\langle(\varepsilon^{\theta^*})^{-1}, m^{\pi^*}\big\rangle_s
   \end{aligned}\end{equation*} 
   and $\langle  \hat m^{\pi^*,\theta^*}\rangle_t= \left\langle  \int_0^\cdot \sigma_s\mathrm{d}W_s\right\rangle_t=\int_0^t\sigma_s^2\mathrm{d}s$, we have $\langle W\rangle_t=t$. Thus,  by the L\'{e}vy theorem (see \cite{Karatzas91}), the canonical decomposition of the continuous ${\cal H}_t$-semimartingale $W_t$ can be given as $W_t=W_{{\cal H}}(t)+\int_0^t\phi_s\mathrm{d}s$, where $W_{{\cal H}}(t)$ is an ${\cal H}_t$-Brownian motion and $\phi_t$ is a measurable ${\cal H}_t$-adapted process. Moreover, by Remark \ref{extensionoffor} we have $\int_0^t\sigma_s\mathrm{d}W_s=\int_0^t\sigma_s\mathrm{d}W_{{\cal H}}(s)+\int_0^t\sigma_s\phi_s\mathrm{d}s$. In summary, we have the following theorem.

    \begin{theorem}\label{mainth3}
  Suppose $(\pi^*,\theta^*)\in {\cal A}_1'\times{\cal A}_2'$ is optimal for Problem \ref{sdg} under Assumptions \ref{assump1} and \ref{assump2}. Then we have the following decomposition 
  \begin{equation} \label{decompofW}
 \begin{aligned}
W_t=W_{\cal H}(t)+\int_0^t\phi_s\mathrm{d}s,  \quad 0\le t\le T,
\end{aligned}
 \end{equation}
 where $W_{\cal H}(t)$ is an ${\cal H}_t$-Brownian motion, $\phi_t$ is a measurable ${\cal H}_t$-adapted process satisfying $\int_0^T|\phi_t|\mathrm{d}t<\infty$. Moreover, by the uniqueness of the canonical decomposition of a continuous semimartingale, $\pi^*$ solves the following equation  
   \begin{equation}\label{halfequ_1}
 \begin{aligned}
0
= &\int_0^t   \left(\mu(s,\pi_s^*)-r_s+\frac{\partial}{\partial x}\mu(s,\pi^*_s)\pi^*_s-\sigma^2_s\pi^*_s \right)\mathrm{d}s  +\int_0^t\sigma_s\phi_s\mathrm{d}s\\
&-\int_0^t \varepsilon^{\theta^*}_s\mathrm{d}\big\langle(\varepsilon^{\theta^*})^{-1}, m^{\pi^*}\big\rangle_s,\quad 0\le t\le T.
\end{aligned}
 \end{equation}
    \end{theorem}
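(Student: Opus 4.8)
The plan is to read off the ${\cal H}_t$-semimartingale structure of $W$ from Theorem~\ref{mainth2} and then identify the two pieces of its canonical decomposition. Write $m^{\pi^*}_t=A_t+\int_0^t\sigma_s\,\mathrm{d}W_s$, where $A_t:=\int_0^t\left(\mu(s,\pi^*_s)-r_s+\frac{\partial}{\partial x}\mu(s,\pi^*_s)\pi^*_s-\sigma^2_s\pi^*_s\right)\mathrm{d}s$ is a continuous finite-variation process whose integrability is guaranteed by admissibility of $\pi^*$. By the remark preceding the theorem, $\langle m^{\pi^*}\rangle_t=\int_0^t\sigma_s^2\,\mathrm{d}s$ is absolutely continuous, so by Kunita--Watanabe the compensator term $\int_0^t\varepsilon^{\theta^*}_s\,\mathrm{d}\langle(\varepsilon^{\theta^*})^{-1},m^{\pi^*}\rangle_s$ appearing in Theorem~\ref{mainth2} is absolutely continuous too. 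Since $\hat m^{\pi^*,\theta^*}$ is an ${\cal H}_t$-local martingale, rearranging $\hat m^{\pi^*,\theta^*}_t=m^{\pi^*}_t-\int_0^t\varepsilon^{\theta^*}_s\,\mathrm{d}\langle(\varepsilon^{\theta^*})^{-1},m^{\pi^*}\rangle_s$ shows that $\int_0^t\sigma_s\,\mathrm{d}W_s$ is a continuous ${\cal H}_t$-semimartingale. Because $\sigma\ge\epsilon>0$, the process $\sigma^{-1}$ is ${\cal F}_t$- (hence ${\cal H}_t$-) adapted and bounded, so $W_t=\int_0^t\sigma_s^{-1}\,\mathrm{d}\!\left(\int_0^{\cdot}\sigma_r\,\mathrm{d}W_r\right)_s$ is itself a continuous ${\cal H}_t$-semimartingale; by Remark~\ref{extensionoffor} this It\^o integral with respect to $W$ viewed as an ${\cal H}_t$-semimartingale agrees with the forward integral, so no clash with \eqref{fin-m} arises.

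Next I would extract the Brownian motion and the drift. Let $W=W_{\cal H}+B$ be the canonical decomposition of $W$ as a continuous ${\cal H}_t$-semimartingale, with $W_{\cal H}$ a continuous local martingale and $B$ of finite variation, both null at $0$. Quadratic variation is pathwise, so $\langle W_{\cal H}\rangle_t=\langle W\rangle_t=\int_0^t\sigma_s^{-2}\,\mathrm{d}\left\langle\int_0^{\cdot}\sigma_r\,\mathrm{d}W_r\right\rangle_s=\int_0^t\sigma_s^{-2}\sigma_s^2\,\mathrm{d}s=t$, and L\'evy's characterization makes $W_{\cal H}$ an ${\cal H}_t$-Brownian motion. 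Combining the two expressions above, $B_t=W_t-W_{\cal H}(t)=-\int_0^t\sigma_s^{-1}\,\mathrm{d}A_s+\int_0^t\sigma_s^{-1}\varepsilon^{\theta^*}_s\,\mathrm{d}\langle(\varepsilon^{\theta^*})^{-1},m^{\pi^*}\rangle_s$ is absolutely continuous; denoting its density by $\phi$ gives a measurable ${\cal H}_t$-adapted $\phi$ with $\int_0^T|\phi_s|\,\mathrm{d}s<\infty$ a.s., which is \eqref{decompofW}.

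Finally, substituting \eqref{decompofW} into \eqref{equ1ofth1} gives $m^{\pi^*}_t=A_t+\int_0^t\sigma_s\phi_s\,\mathrm{d}s+\int_0^t\sigma_s\,\mathrm{d}W_{\cal H}(s)$, hence
\[
\hat m^{\pi^*,\theta^*}_t=\left(A_t+\int_0^t\sigma_s\phi_s\,\mathrm{d}s-\int_0^t\varepsilon^{\theta^*}_s\,\mathrm{d}\langle(\varepsilon^{\theta^*})^{-1},m^{\pi^*}\rangle_s\right)+\int_0^t\sigma_s\,\mathrm{d}W_{\cal H}(s).
\]
The last term is a continuous local martingale and the bracketed term is continuous of finite variation; since $\hat m^{\pi^*,\theta^*}$ is itself a local martingale and a continuous finite-variation local martingale null at $0$ vanishes identically, the bracketed term must be zero, which is exactly \eqref{halfequ_1}. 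I expect the difficulties here to be bookkeeping rather than conceptual: checking that $\sigma^{-1}$ is an admissible (locally bounded) integrand for the ${\cal H}_t$-semimartingale $W$, transporting the quadratic variation between the ${\cal F}_t$- and ${\cal H}_t$-pictures, and --- the point most easily overlooked --- confirming that $B$ is genuinely absolutely continuous with an ${\cal H}_t$-adapted, a.s.\ integrable density, which rests on the Kunita--Watanabe domination of $\langle(\varepsilon^{\theta^*})^{-1},m^{\pi^*}\rangle$ by $\langle m^{\pi^*}\rangle_t=\int_0^t\sigma_s^2\,\mathrm{d}s$.
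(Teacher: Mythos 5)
Your proposal is correct and follows essentially the same route as the paper: deduce from Theorem~\ref{mainth2} that $\int_0^t\sigma_s\,\mathrm{d}W_s$ (hence $W$, via $\sigma\ge\epsilon>0$) is a continuous ${\cal H}_t$-semimartingale, compute $\langle W\rangle_t=t$ and invoke L\'evy's characterization for the martingale part, and obtain \eqref{halfequ_1} by matching the finite-variation parts in the canonical decomposition. Your explicit Kunita--Watanabe justification that the covariation term, and hence $B$, is absolutely continuous is a welcome bit of extra care that the paper only gestures at in the remark following Theorem~\ref{mainth2}.
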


 Further, by Theorem \ref{mainth3} and Remark \ref{extensionoffor}, the dynamic of the ${\cal H}_t$-martingale $\varepsilon^\theta_t$ (see (\ref{theta_sde})) can be rewritten as  
  \begin{equation} \label{half_equ2}
 \begin{aligned}
 \varepsilon^\theta_t=1+\varepsilon^\theta_{t}\Bigg( & \int_0^t\theta_s\mathrm{d}W_{\cal H}(s) \Bigg) ,\quad 0\le t\le T,
\end{aligned}
 \end{equation}
 for $ \theta \in{\cal A}_2'$. By the It\^{o} formula for It\^{o} integrals (see \cite{Karatzas91}), we have
  \begin{equation} \label{etominus1}
 \begin{aligned}
 (\varepsilon_t^{\theta})^{-1}&=1+\int_0^t(\varepsilon_s^{\theta})^{-1}\theta_s^2  \mathrm{d}s  -\int_0^t (\varepsilon_s^{\theta})^{-1}\theta_s\mathrm{d}W_{\cal H}(s).
 \end{aligned}
 \end{equation}
  
 For the optimal pair $(\pi^*,\theta^*)$, by Theorem \ref{mainth3} we can easily calculate the covariation process of $(\varepsilon^{\theta^*}_t)^{-1}$ and $m^{\pi^*}_t $ as follows
   \begin{equation} \label{reasonfor1}
 \begin{aligned}
 \big\langle (\varepsilon^{\theta^*})^{-1},m^{\pi^*} \big\rangle_t
=-\int_{0}^t (\varepsilon_s^{\theta^*})^{-1}\theta_s^*\sigma_s\mathrm{d}s.
 \end{aligned}
 \end{equation}
Then we obtain the following theorem by substituting (\ref{reasonfor1}) into (\ref{halfequ_1}) in Theorem \ref{mainth3}.
  
 \begin{theorem}\label{mainth6}
Suppose $(\pi^*,\theta^*)\in {\cal A}_1'\times{\cal A}_2'$ is optimal for Problem \ref{sdg} under Assumptions \ref{assump1} and \ref{assump2}. Then $\pi^*$ solves the following equation
    \begin{equation}\label{halfequ_21}
 \begin{aligned}
0
= & \mu(t,\pi_t^*)-r_t+\frac{\partial}{\partial x}\mu(t,\pi^*_t)\pi^*_t-\sigma^2_t\pi^*_t  +\sigma_t\phi_t+\sigma_t\theta_t^* ,\quad 0\le t\le T.
\end{aligned}
 \end{equation}
 \end{theorem}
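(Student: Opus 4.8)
The plan is to substitute the covariation identity (\ref{reasonfor1}) into the integral equation (\ref{halfequ_1}) furnished by Theorem \ref{mainth3}, and then pass from this integral identity in $t$ to the desired pointwise identity by differentiation in $t$. For completeness I would first recall why (\ref{reasonfor1}) holds: by the decomposition (\ref{decompofW}) and Remark \ref{extensionoffor}, the local-martingale part of $m^{\pi^*}$ is $\int_0^\cdot\sigma_s\,\mathrm{d}W_{\cal H}(s)$, while (\ref{etominus1}) exhibits the local-martingale part of $(\varepsilon^{\theta^*})^{-1}$ as $-\int_0^\cdot(\varepsilon_s^{\theta^*})^{-1}\theta_s^*\,\mathrm{d}W_{\cal H}(s)$; since $W_{\cal H}$ is an ${\cal H}_t$-Brownian motion, reading off the continuous covariation gives $\big\langle(\varepsilon^{\theta^*})^{-1},m^{\pi^*}\big\rangle_t=-\int_0^t(\varepsilon_s^{\theta^*})^{-1}\theta_s^*\sigma_s\,\mathrm{d}s$.

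Next I would plug this into (\ref{halfequ_1}). Since $\big\langle(\varepsilon^{\theta^*})^{-1},m^{\pi^*}\big\rangle$ is absolutely continuous (as observed in the remark following Theorem \ref{mainth2}), the integrator $\mathrm{d}\big\langle(\varepsilon^{\theta^*})^{-1},m^{\pi^*}\big\rangle_s$ reduces to $-(\varepsilon_s^{\theta^*})^{-1}\theta_s^*\sigma_s\,\mathrm{d}s$, and the Stieltjes integral becomes $\int_0^t\varepsilon^{\theta^*}_s\,\mathrm{d}\big\langle(\varepsilon^{\theta^*})^{-1},m^{\pi^*}\big\rangle_s=-\int_0^t\varepsilon^{\theta^*}_s(\varepsilon_s^{\theta^*})^{-1}\theta_s^*\sigma_s\,\mathrm{d}s=-\int_0^t\theta_s^*\sigma_s\,\mathrm{d}s$, the factor $\varepsilon^{\theta^*}_s(\varepsilon_s^{\theta^*})^{-1}$ collapsing to $1$. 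Hence (\ref{halfequ_1}) becomes
\[
0=\int_0^t\Big(\mu(s,\pi_s^*)-r_s+\frac{\partial}{\partial x}\mu(s,\pi^*_s)\pi^*_s-\sigma^2_s\pi^*_s+\sigma_s\phi_s+\sigma_s\theta_s^*\Big)\,\mathrm{d}s,\qquad 0\le t\le T.
\]

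Finally I would differentiate in $t$. All coefficients under the integral sign are ${\cal H}_t$-adapted and integrable over $[0,T]$ --- by the admissibility of $\pi^*\in{\cal A}_1'$ and $\theta^*\in{\cal A}_2'$ together with the bound $\int_0^T|\phi_s|\,\mathrm{d}s<\infty$ from Theorem \ref{mainth3} --- so the right-hand side is an absolutely continuous function of $t$ that vanishes identically; its density must therefore be zero for Lebesgue-a.e.\ $t\in[0,T]$, ${\mathbb P}$-a.s., which is exactly (\ref{halfequ_21}). I do not expect a genuine obstacle here, since the statement is a direct corollary of Theorem \ref{mainth3}; the only point deserving a word of care is the cancellation $\varepsilon^{\theta^*}_s(\varepsilon_s^{\theta^*})^{-1}=1$ inside the integrator, which is legitimate precisely because $\big\langle(\varepsilon^{\theta^*})^{-1},m^{\pi^*}\big\rangle$ carries no singular part.
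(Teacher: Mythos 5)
Your proposal is correct and follows exactly the paper's route: the paper likewise derives (\ref{reasonfor1}) from the martingale parts of $(\varepsilon^{\theta^*})^{-1}$ in (\ref{etominus1}) and of $m^{\pi^*}$, substitutes it into (\ref{halfequ_1}), cancels $\varepsilon^{\theta^*}_s(\varepsilon_s^{\theta^*})^{-1}=1$, and reads off the vanishing of the density of the resulting absolutely continuous process. Your added care about the absence of a singular part and the a.e.\ differentiation step is a welcome (if minor) elaboration of what the paper leaves implicit.
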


\section{A total characterization of the robust optimal portfolio}
\label{tt_section}

 In the previous section, we give the characterization of $\pi^*$ for the optimal pair $(\pi^*,\theta^*)$ by using the maximality of $J({\pi^*},{\theta^*})$ with respect to $\pi$. Thus, we obtain the relationship between ${\pi^*}$ and ${\theta^*}$ (see the equation (\ref{halfequ_21})). However, we have not used the minimality of $J({\pi^*},{\theta^*})$ with respect to $\theta$. Thus, we need the other half characterization of ${\theta^*}$.
 
It is very difficult to give a characterization of ${\theta^*}$ directly due to the complexity of the other half controlled process $\varepsilon^\theta_t$ (see (\ref{theta_sde})). Fortunately, under Assumptions \ref{assump1} and \ref{assump2}, we get the decomposition $W_t=W_{\cal H}(t)+\int_0^t\phi_s\mathrm{d}s$ with respect to the filtration $\{{\cal H}_t\}$ by Theorem \ref{mainth3}. Thus, we have a better expression of $\varepsilon^\theta_t$ for $\theta \in{\cal A}_2'$ (see (\ref{half_equ2})). Moreover, we can also rewrite the dynamic of the wealth process $X^\pi_t$ (see (\ref{wealthsde})) as follows:
 \begin{equation}\label{wealthsde2}
 \begin{aligned}
 {\mathrm{d}X^\pi_t}=\left[r_t+(\mu(t,\pi_t)-r_t)\pi_t +\sigma_t\pi_t \phi_t \right]X^\pi_{t}\mathrm{d}t+  \sigma_t\pi_t X^\pi_{t}\mathrm{d}W_{\cal H}(t), \quad 0\le t\le T,
\end{aligned}
 \end{equation}

 Since (\ref{half_equ2}) and (\ref{wealthsde2}) can be viewed as the classical SDEs with respect to the ${\cal H}_t$-Brownian motion $W_{\cal H}(t)$, Problem \ref{sdg} turns to a nonanticipating stochastic differential game problem with respect to the filtration $\{{\cal H}_t\}_{0\le t\le T}$. Thus, we can use the stochastic maximum principle to solve our problem. 
   
We make the following assumptions before our procedure.
 
 \begin{assumption}\label{assump3}
If $({\pi^*},{\theta^*})\in {\cal A}_1'\times {\cal A}_2'$ is optimal for Problem \ref{sdg}, then for all bounded $  \beta \in {\cal A}_2'$, there exists some $\delta>0$ such that $ {\theta^*}+y\beta \in   {\cal A}_2'$ for all $|y|<\delta$. Moreover, the following family of random variables 
  \begin{equation*}
 \begin{aligned}
\left \{  \frac{\mathrm{d}}{\mathrm{d}y}\varepsilon_T^{{\theta^*}+y\beta}\ln X_T^{{\pi^*} } \right\}_{y\in(-\delta,\delta)}
 \end{aligned}
 \end{equation*}
is $\mathbb{P}$-uniformly integrable, and the following family of random fields
  \begin{equation*}
 \begin{aligned}
\left \{  \frac{\mathrm{d}}{\mathrm{d}y}\varepsilon^{{\theta^*}+y\beta}_tg({\theta^*}+y\beta) +\varepsilon_t^{{\theta^*}+y\beta} g'({\theta^*}+y\beta)\beta \right\}_{y\in(-\delta,\delta)}
 \end{aligned}
 \end{equation*}
is $\mathrm{m} \times \mathbb{P}$-uniformly integrable, where $\mathrm{m}$ is the Borel-Lebesgue measure on $[0,T]$, and $\frac{\mathrm{d}}{\mathrm{d}y}$ means that $\frac{\mathrm{d}}{\mathrm{d}y}\varepsilon_t^{{\theta^*}+y\beta}$ exists. 
 \end{assumption}
 
  \begin{assumption}\label{assump3_2}
If $({\pi^*},{\theta^*})\in {\cal A}_1'\times {\cal A}_2'$ is optimal for Problem \ref{sdg} under Assumptions \ref{assump1}, \ref{assump2} and \ref{assump3}, then for all bounded $ (\alpha , \beta) \in  {\cal A}_1'\times{\cal A}_2'$, we can define $\tilde\psi^{{\pi^*}}_t:=\frac{\mathrm{d}}{\mathrm{d}y}X_t^{\pi^*+y\alpha}{|}_{y=0}$ and $\psi^{{\theta^*}}_t:=\frac{\mathrm{d}}{\mathrm{d}y}\varepsilon_t^{\theta^*+y\beta}{|}_{y=0}$ by Assumptions \ref{assump1} and \ref{assump3}. Assume the following SDEs hold:
 \begin{eqnarray*}
  \left\{ \begin{aligned}&\mathrm{d}\tilde\psi^{{\pi^*}}_t= \Bigg[  \frac{\partial}{\partial x}\mu(t,\pi^*_t) \pi^*_t\alpha_t+(\mu(t,\pi^*_t)-r_t)\alpha_t+\sigma_t\phi_t\alpha_t\Bigg]X^{{\pi^*}}_{t}\mathrm{d}t +\sigma_t  \alpha_tX^{{\pi^*}}_{t}\mathrm{d}W_{\cal H}(t) \\
& \hspace{4em}+  \tilde\psi^{{\pi^*}}_t\Bigg[r_t+(\mu(t,\pi_t^*)-r_t)\pi_t^*  +\sigma_t\pi_t^*\phi_t  \Bigg]\mathrm{d}t+\tilde\psi^{{\pi^*}}_t \sigma_t\pi^*_t \mathrm{d}W_{\cal H}(t), \quad 0\le t\le T,
 \\
  &\tilde\psi^{{\pi^*}}_0=0  ,\end{aligned} \right.   
 \end{eqnarray*}
  \begin{eqnarray*}
  \left\{ \begin{aligned}&\mathrm{d}\psi^{{\theta^*}}_t=\varepsilon_{t}^{{\theta^*}}\beta_t\mathrm{d}W_{\cal H}(t)+\psi^{{\theta^*}}_t     \theta^*_t \mathrm{d}W_{\cal H}(t) , \quad 0\le t\le T,
  \\&\psi^{{\theta^*}}_0=0 .\end{aligned} \right.   
 \end{eqnarray*}
 \end{assumption}
 
  \begin{assumption}\label{assump4}
  Let $ \beta_s= \xi 1_{(t,t+h]}(s) $, $0\le s\le T$, for fixed $0\le t<t+h\le T$, where the random variable $\xi$ is of the form $1_{A_t}$ for any ${\cal H}_t$-measurable set $A_t$. Then $\beta\in {\cal A}_2'$.
  \end{assumption}
  
Now we define the Hamiltonian $H:[0,T]\times\mathbb{R}\times\mathbb{R}\times\mathbb{R}  \times \mathbb{R}   \times\mathbb{R}^2\times\mathbb{R}^2\times\Omega\rightarrow\mathbb{R} $ by 
  \begin{equation*}
 \begin{aligned}
H(t,x,\varepsilon,\pi,\theta,p,q,\omega):=&g(\theta)\varepsilon+\left[r_t+(\mu(t,\pi)-r_t)\pi +\sigma_t\pi \phi_t \right]xp_1 +\sigma_t\pi xq_1 +
\varepsilon\theta q_{2},
 \end{aligned}
 \end{equation*}
 where $p=\begin{pmatrix}p_{1}\\p_{2}\end{pmatrix}$, and $q=\begin{pmatrix}q_{1}\\q_{2}\end{pmatrix}$. It is obvious that $H$ is differentiable with respect to $x$, $\varepsilon$, $\pi$ and $\theta$. The associated BSDE system for the adjoint pair $(p_t,q_t)$ is given by  
  \begin{eqnarray}\label{adjoint0}
  \left\{ \begin{aligned}&\mathrm{d}p_1(t)=-\frac{\partial H}{\partial x}(t)\mathrm{d}t+q_{1}(t)\mathrm{d}W_{\cal H}(t), \quad 0\le t\le T,
  \\&p_1(T)=\varepsilon^{\theta}_T (X^{\pi}_T) ^{-1},\end{aligned} \right.   
 \end{eqnarray}
 and
 \begin{eqnarray}\label{adjoint}
  \left\{ \begin{aligned}&\mathrm{d}p_2(t)=-\frac{\partial H}{\partial \varepsilon}(t)\mathrm{d}t+q_{2}(t)\mathrm{d}W_{\cal H}(t), \quad 0\le t\le T,
  \\&p_2(T)=\ln X^{\pi}_T,\end{aligned} \right.   
 \end{eqnarray}
 where $p_{i}(t)$ is a continuous ${\cal H}_t$-semimartingale, and $q_{i}(t)$ is an ${\cal H}_t$-adapted process with the following integrability
  \begin{eqnarray*}
\int_0^T  \left[\big|\frac{\partial H}{\partial x}(t)\big|+\big|\frac{\partial H}{\partial \varepsilon}(t)\big|+|q_{i}(t)|^2 \right]\mathrm{d}t <\infty,
  \end{eqnarray*}
$ i=1,2$. Here and in the following the abbreviated notation $  H(t):= H(t,X_t^\pi,\varepsilon^\theta_t,\pi_t,\theta_t,p_t,q_t,\omega) $, etc., are taken.

We give a necessary maximum principle to characterize the optimal pair $({\pi^*},{\theta^*})\in {\cal A}_1'\times{\cal A}_2'$.

   \begin{theorem}\label{mainth4}
Suppose $({\pi^*},{\theta^*})\in {\cal A}_1'\times{\cal A}_2'$ is optimal for Problem \ref{sdg} under Assumptions \ref{assump1}, \ref{assump2}, \ref{assump3},  \ref{assump3_2} and \ref{assump4}, and $(p^*,q^*)$ is the associated adjoint pair satisfying BSDEs (\ref{adjoint0}) and (\ref{adjoint}). Then $({\pi^*},{\theta^*})$ solves the following equations (the Hamiltonian system)
   \begin{equation} \label{half_equ3_0}
 \begin{aligned}
 \frac{\partial H^*}{\partial \pi}(t) =0,\quad 0\le t\le T,
\end{aligned}
 \end{equation}
and
  \begin{equation} \label{half_equ3}
 \begin{aligned}
 \frac{\partial H^*}{\partial \theta}(t) =0,\quad 0\le t\le T,
\end{aligned}
 \end{equation}
 given the following integrability conditions 
   \begin{equation*} 
 \begin{aligned}
\mathbb{E}\Bigg\{&\int_0^T (\tilde\psi^{{\pi^*}}_t)^2 \mathrm{d}\big\langle p^*_{1}\big\rangle_t +\int_0^T p^*_1(t)^2\mathrm{d}\big\langle\tilde\psi^{{\pi^*}}\big\rangle_t      \Bigg\} <\infty, 
\end{aligned}
 \end{equation*}
 and
  \begin{equation*} 
 \begin{aligned}
\mathbb{E}\Bigg\{&\int_0^T (\psi^{{\theta^*}}_t)^2\mathrm{d}\big\langle p^*_2\big\rangle_t +\int_0^T p^*_2(t)^2\mathrm{d}\big\langle\psi^{{\theta^*}}\big\rangle_t  \Bigg\} <\infty, 
\end{aligned}
 \end{equation*}
  for all bounded $(\alpha,\beta)\in{\cal A}_1'\times{\cal A}_2'$. Here, $H^*(t):=H(t,X^{{\pi^*}}_t,\varepsilon^{{\theta^*}}_t,\pi^*_t,\theta^*_t,p^*_t,q^*_t,\omega)$, etc.
\end{theorem}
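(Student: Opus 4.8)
The plan is to exploit the saddle-point property of $(\pi^*,\theta^*)$ one coordinate at a time: since $(\pi^*,\theta^*)$ solves Problem \ref{sdg}, the map $y\mapsto J(\pi^*+y\alpha,\theta^*)$ attains a maximum at $y=0$ for every admissible direction $\alpha$, and $y\mapsto J(\pi^*,\theta^*+y\beta)$ attains a minimum at $y=0$ for every admissible direction $\beta$. Because Theorem \ref{mainth3} has already turned (\ref{wealthsde2}) and (\ref{half_equ2}) into genuine nonanticipating It\^o SDEs driven by the $\cH_t$-Brownian motion $W_{\cal H}$, the whole argument is the classical necessary stochastic maximum principle: pair the variational equation with the adjoint equation via the It\^o product formula, discard the (true) martingale terms, and read off the first-order condition.

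\emph{The $\pi$-side.} For bounded $\alpha\in\cA_1'$ and $|y|<\delta$ (Assumption \ref{assump3_2}, which rests on Assumption \ref{assump1}), the uniform integrability in Assumption \ref{assump1} lets me differentiate under $\bE$, so that with $\tilde\psi^{\pi^*}$ the solution of the first variational SDE in Assumption \ref{assump3_2},
\[
0=\frac{\mathrm d}{\mathrm dy}J(\pi^*+y\alpha,\theta^*)\Big|_{y=0}=\bE\!\left[\varepsilon_T^{\theta^*}(X_T^{\pi^*})^{-1}\tilde\psi_T^{\pi^*}\right]=\bE\!\left[p_1^*(T)\,\tilde\psi_T^{\pi^*}\right],
\]
using the terminal condition $p_1^*(T)=\varepsilon_T^{\theta^*}(X_T^{\pi^*})^{-1}$ of (\ref{adjoint0}). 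I then expand $p_1^*(t)\tilde\psi_t^{\pi^*}$ by the It\^o product formula, insert the drift $-\partial H^*/\partial x$ from (\ref{adjoint0}) and the drift/diffusion of $\tilde\psi^{\pi^*}$; the stochastic-integral part has zero expectation precisely by the first integrability condition in the statement (via Burkholder--Davis--Gundy). After the standard cancellation (all terms carrying $\tilde\psi^{\pi^*}$ annihilate against $\partial H^*/\partial x$), one is left with $0=\bE\!\left[\int_0^T \tfrac{\partial H^*}{\partial\pi}(t)\,\alpha_t\,\mathrm dt\right]$. Since this holds for every bounded $\alpha\in\cA_1'$, in particular for the perturbations $\alpha_s=\vartheta\,1_{(t,t+h]}(s)$ allowed by Assumption \ref{assump2}, a Lebesgue-differentiation argument (letting $h\to0$ and varying $\vartheta$) yields $\tfrac{\partial H^*}{\partial\pi}(t)=0$ for a.e.\ $t$, $\bP$-a.s., i.e.\ (\ref{half_equ3_0}).

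\emph{The $\theta$-side.} Symmetrically, minimality of $y\mapsto J(\pi^*,\theta^*+y\beta)$ at $y=0$ for bounded $\beta\in\cA_2'$, together with the uniform integrability of Assumption \ref{assump3}, gives $0=\bE[\psi_T^{\theta^*}\ln X_T^{\pi^*}]+\bE[\int_0^T(\psi_s^{\theta^*}g(\theta_s^*)+\varepsilon_s^{\theta^*}g'(\theta_s^*)\beta_s)\,\mathrm ds]$, where $\psi^{\theta^*}$ solves the second variational SDE in Assumption \ref{assump3_2} and $\ln X_T^{\pi^*}=p_2^*(T)$ by (\ref{adjoint}). Applying the It\^o product formula to $p_2^*(t)\psi_t^{\theta^*}$, substituting the drift $-\partial H^*/\partial\varepsilon=-(g(\theta_t^*)+\theta_t^*q_2^*(t))$ from (\ref{adjoint}) and using the second integrability condition to kill the martingale part, the $\psi^{\theta^*}g(\theta^*)$-contributions cancel and what survives is $0=\bE[\int_0^T \varepsilon_s^{\theta^*}\big(q_2^*(s)+g'(\theta_s^*)\big)\beta_s\,\mathrm ds]=\bE[\int_0^T \tfrac{\partial H^*}{\partial\theta}(s)\,\beta_s\,\mathrm ds]$. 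Taking $\beta_s=1_{A_t}1_{(t,t+h]}(s)$ as in Assumption \ref{assump4} and letting $h\to0$ yields $\tfrac{\partial H^*}{\partial\theta}(t)=0$ for a.e.\ $t$, $\bP$-a.s., which is (\ref{half_equ3}).

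\emph{Main obstacle.} The It\^o bookkeeping is routine; the genuine work lies in the analytic justifications that Assumptions \ref{assump1}, \ref{assump3} and \ref{assump3_2} are designed to deliver --- interchanging $\tfrac{\mathrm d}{\mathrm dy}$ with both $\bE$ and the $\mathrm dt$-integral, and verifying that $\tilde\psi^{\pi^*}$ and $\psi^{\theta^*}$ really are the $y$-derivatives at $y=0$ of $X^{\pi^*+y\alpha}$ and $\varepsilon^{\theta^*+y\beta}$, so that the linear variational SDEs may legitimately be used. The second delicate point is promoting the two stochastic integrals produced by the product-rule expansions from local martingales to true martingales of zero expectation, which is exactly what the two displayed integrability hypotheses provide. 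Everything else is the textbook pairing of the variational and adjoint equations.
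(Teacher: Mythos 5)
Your proposal is correct and follows essentially the same route as the paper: differentiate $J$ in each control direction, identify the derivative of the terminal term with the adjoint terminal condition, pair the variational SDE with the adjoint BSDE via the It\^{o} product rule (the displayed integrability conditions killing the martingale parts), and localize with the Assumption \ref{assump2}/\ref{assump4} perturbations as in Theorem \ref{mainth1}. The only difference is cosmetic: the paper writes out the $\theta$-side and dismisses the $\pi$-side as ``similar arguments,'' whereas you carry out the $\pi$-side cancellation explicitly, which is a welcome addition rather than a deviation.
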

\begin{proof}
 Suppose that the pair $({\pi^*},{\theta^*})\in {\cal A}_1'\times{\cal A}_2'$ is optimal. Then for any bounded $\beta\in {\cal A}_2' $ and $|y|<\delta$, we have $J({\pi^*},{\theta^*}+y\beta)\ge J({\pi^*},{\theta^*}) $, which implies that $y=0$ is a minimum point of the function $y\mapsto J({\pi^*},{\theta^*}+y\beta)$. By Assumptions \ref{assump3} and \ref{assump3_2}, and It\^{o} formula for It\^{o} integrals, we have 
  \begin{equation} 
 \begin{aligned}
\frac{\mathrm{d}}{\mathrm{d}y}J({\pi^*},{\theta^*}+y\beta){|}_{y=0}&=\mathbb{E}\Bigg[ \psi^{{\theta^*}}_T \ln X_T^{{\pi^*}}+\int_0^T \psi^{{\theta^*}}_s g(\theta^*_s)\mathrm{d}s+\int_0^T\varepsilon^{{\theta^*}}_s g'(\theta^*_s)\beta_s\mathrm{d}s
\Bigg]\\
&=\mathbb{E}\Bigg[ \psi^{{\theta^*}}_Tp^*_{2}(T)+\int_0^T \psi^{\theta^*}_sg(\theta^*_s)\mathrm{d}s+\int_0^T\varepsilon^{{\theta^*}}_s g'(\theta^*_s)\beta_s\mathrm{d}s
\Bigg]\\
&=\mathbb{E}\Bigg[ \int_0^T\psi^{{\theta^*}}_s\mathrm{d}p^*_2(s)+\int_0^Tp^*_2(s)\mathrm{d}\psi^{{\theta^*}}_s+\big\langle p^*_2,\psi^{{\theta^*}}\big\rangle_T\\
&\hspace{2em}\  +\int_0^T \psi^{{\theta^*}}_s g(\theta^*_s)\mathrm{d}s+\int_0^T\varepsilon^{{\theta^*}}_s g'(\theta^*_s)\beta_s\mathrm{d}s
\Bigg]
 \\
&=\mathbb{E}\Bigg[  \int_0^Tq^*_{2}(s)\varepsilon^{{\theta^*}}_{s}\beta_s\mathrm{d}s+\int_0^T\varepsilon^{{\theta^*}}_s g'(\theta^*_s)\beta_s\mathrm{d}s\Bigg]\\
&=\mathbb{E}\Bigg[  \int_0^T \frac{\partial H^*}{\partial \theta}(s) \beta_s\mathrm{d}s\Bigg]\\
&=0.
 \end{aligned}
 \end{equation}
By Assumption \ref{assump4} and the same procedure in Theorem \ref{mainth1}, we can deduce that $ \frac{\partial H^*}{\partial \theta}(t) =0$, $t\in[0,T]$. By similar arguments, we can conclude that $ \frac{\partial H^*}{\partial \pi}(t) =0$, $t\in[0,T]$.
\end{proof}

Combining Theorem \ref{mainth4} with the conclusion in Section \ref{half_section}, we give the total characterization of the optimal pair $({\pi^*},{\theta^*})$ as the following theorem.

   \begin{theorem}\label{mainth5}
Suppose $({\pi^*},{\theta^*})\in {\cal A}_1'\times{\cal A}_2'$ is optimal for Problem \ref{sdg} (with the associated pair $(p^*,q^*)$  satisfying BSDEs (\ref{adjoint0}) and (\ref{adjoint})) under the conditions of Theorem \ref{mainth4}. Then $({\pi^*},{\theta^*})$ solves equations (\ref{halfequ_21}), (\ref{half_equ3_0}) and (\ref{half_equ3}). 
 \end{theorem}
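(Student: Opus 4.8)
The plan is to read Theorem \ref{mainth5} as a pure assembly statement: under the hypotheses listed — ``the conditions of Theorem \ref{mainth4}'', which by construction already contain Assumptions \ref{assump1} and \ref{assump2} — the optimal pair $(\pi^*,\theta^*)$ must simultaneously satisfy the first-order condition obtained by varying $\pi$ in Section \ref{half_section} and the Hamiltonian-system conditions obtained from the stochastic maximum principle of Theorem \ref{mainth4}. So the first step is to observe that every intermediate result of Section \ref{half_section} applies verbatim to the present $(\pi^*,\theta^*)$: Theorem \ref{mainth1} (the $({\cal H}_t,{\cal Q}^{\theta^*})$-martingale property of $m^{\pi^*}$), then Theorem \ref{mainth2} (Girsanov back to $\mathbb{P}$), then Theorem \ref{mainth3} (the canonical decomposition $W_t=W_{\cal H}(t)+\int_0^t\phi_s\mathrm{d}s$ of (\ref{decompofW}) together with equation (\ref{halfequ_1})), and finally — after inserting the covariation identity (\ref{reasonfor1}), itself a consequence of (\ref{half_equ2})--(\ref{etominus1}) — Theorem \ref{mainth6}. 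This yields equation (\ref{halfequ_21}).

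The second step is to invoke Theorem \ref{mainth4} directly. Its hypotheses are exactly ``the conditions of Theorem \ref{mainth4}'' appearing in the statement of Theorem \ref{mainth5}, so with the associated adjoint pair $(p^*,q^*)$ solving BSDEs (\ref{adjoint0})--(\ref{adjoint}) we obtain the Hamiltonian equations (\ref{half_equ3_0}) and (\ref{half_equ3}). Concatenating the outputs of the two steps then gives the three equations (\ref{halfequ_21}), (\ref{half_equ3_0}), (\ref{half_equ3}) claimed in Theorem \ref{mainth5}.

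What makes this more than a one-line citation is a consistency check I would carry out carefully: the process $\phi$ and the ${\cal H}_t$-Brownian motion $W_{\cal H}$ that enter the rewritten wealth dynamics (\ref{wealthsde2}), the martingale dynamics (\ref{half_equ2}) of $\varepsilon^\theta$, the definition of the Hamiltonian $H$, and hence the BSDEs (\ref{adjoint0})--(\ref{adjoint}), all presuppose the decomposition of Theorem \ref{mainth3}. I would verify that these are literally the same objects used in Section \ref{half_section}: both the half-characterization argument and the maximum-principle argument are applied to one and the same optimal pair $(\pi^*,\theta^*)$, and the decomposition (\ref{decompofW}) is \emph{canonical}, hence unique, so there is no ambiguity — the $\theta^*$ appearing in (\ref{halfequ_21}) and the $\theta^*$ perturbed in Theorem \ref{mainth4} coincide. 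I would also note that Section \ref{half_section} derives its conclusion first under ${\cal Q}^{\theta^*}$ and only then transfers it to $\mathbb{P}$ via Girsanov, whereas Theorem \ref{mainth4} works throughout under $\mathbb{P}$ on $\{{\cal H}_t\}$; since both endpoints are $\mathbb{P}$-statements about the same pair, they compose without further translation.

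I expect the only genuine obstacle to be the bookkeeping of admissibility of perturbations, not any analytic difficulty. One must ensure that the same optimal pair can be perturbed both in the $\pi$-direction (needed for Assumptions \ref{assump1} and \ref{assump2}, i.e.\ $\pi^*+y\alpha\in{\cal A}_1'$) and in the $\theta$-direction (needed for Assumptions \ref{assump3}, \ref{assump3_2} and \ref{assump4}, i.e.\ $\theta^*+y\beta\in{\cal A}_2'$), together with the respective uniform-integrability and integrability conditions; but this is precisely what ``under the conditions of Theorem \ref{mainth4}'' encodes, so no additional work is needed. Hence the proof reduces to citing Theorems \ref{mainth6} and \ref{mainth4} once the hypotheses have been matched.
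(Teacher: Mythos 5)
Your proposal is correct and matches the paper's own treatment: Theorem \ref{mainth5} is stated there without a separate proof, introduced only by the sentence that it follows from ``combining Theorem \ref{mainth4} with the conclusion in Section \ref{half_section}'' (i.e.\ Theorem \ref{mainth6} for (\ref{halfequ_21}) and Theorem \ref{mainth4} for (\ref{half_equ3_0})--(\ref{half_equ3})). Your additional consistency checks on the canonical decomposition and the admissibility of perturbations are sensible but not part of, nor needed beyond, the paper's argument.
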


\begin{remark}
In fact, equations (\ref{halfequ_21}) and (\ref{half_equ3}) are enough to obtain the optimal pair $({\pi^*},{\theta^*})$. This combined method (rather than only using the Hamiltonian system  (\ref{half_equ3_0})-(\ref{half_equ3})) can always give a better characterization of $({\pi^*},{\theta^*})$ since the relationship of $\pi^*$ and $\theta^*$ is given by (\ref{halfequ_21}). Moreover, when the mean rate of return $\mu$ is dependent on $\pi^*$, i.e., a large insurer is consider, it is very hard to obtain the solution $({\pi^*},{\theta^*})$ by just using (\ref{half_equ3_0})-(\ref{half_equ3}) since the dynamic of $X^\pi$ is not homogeneous in this situation (see \cite{Oksendal19}), while it is not by combining the Hamiltonian system with (\ref{halfequ_21}). We will give examples in the next section to illustrate this.
\end{remark}

\section{The small insider case: maximum principle}
\label{sec_exampleg_sub1}
We assume that the mean rate of return function $\mu(t,x)=\mu_0(t)+\varrho_tx$ for some ${\cal F}_t$-adapted measurable processes $\mu_0(t)$ and $\varrho_t$ with $0\le \varrho_t< \frac{1}{2}\sigma_t^2$. Put $\iota_t=\frac{\mu_0(t)-r_t}{\sigma_t}$, $\tilde\sigma_t=\sigma_t-\frac{2\varrho_t}{\sigma_t}$, and $\tilde \phi_t=\iota_t+\phi_t$. Assume further the penalty function $g$ is of the quadratic form, i.e., $g(\theta)=\frac{1}{2}\theta^2 $. Then we have by the Girsanov theorem that
     \begin{equation*} 
 \begin{aligned}
\mathbb{E}\left[  \int_0^T\varepsilon_s^{\theta^*}  g(\theta^*_s)\mathrm{d}s \right]
= \mathbb{E}_{{\cal Q}^{\theta^*}}\left[\int_0^Tg(\theta^*_s)\mathrm{d}s   \right]
&=\mathbb{E}_{{\cal Q}^{\theta^*}}\left[  \int_0^T\theta^*_s\mathrm{d}W_{\cal H}(s)  -\ln \varepsilon^{\theta^*}_T  \right]\\
&=\mathbb{E}_{{\cal Q}^{\theta^*}}\left[  \int_0^T(\theta^*_s)^2 \mathrm{d}s -\ln \varepsilon^{\theta^*}_T  \right]\\
&=2\mathbb{E}_{{\cal Q}^{\theta^*}}\left[\int_0^Tg(\theta^*_s)\mathrm{d}s   \right]-\mathbb{E}_{{\cal Q}^{\theta^*}}\left[  \ln \varepsilon^{\theta^*}_T  \right],
\end{aligned}
 \end{equation*}
 which implies that 
  \begin{equation}\label{part1ofJ}
  \mathbb{E}\left[  \int_0^T\varepsilon_s^{\theta^*}  g(\theta^*_s)\mathrm{d}s \right]
= \mathbb{E}_{{\cal Q}^{\theta^*}}\left[  \ln \varepsilon^{\theta^*}_T  \right]= \mathbb{E}\left[\varepsilon^{\theta^*}_T  \ln \varepsilon^{\theta^*}_T  \right].
\end{equation}  

We make the following assumption before our procedure.

\begin{assumption}\label{extraa1}
Suppose the following integrability condition holds
\begin{equation}
\int_0^T |\tilde\phi_t|^2\mathrm{d}t<\infty.
\end{equation}
\end{assumption}

By the Hamiltonian system (\ref{half_equ3}) in Theorem \ref{mainth4}, we have
 \begin{eqnarray} \label{logconti_v}
\frac{\partial H^*}{\partial \theta}(t)=  \theta^*_t+q_{2}^*(t)  =0,
 \end{eqnarray}
 Substituting (\ref{logconti_v}) into the adjoint BSDE (\ref{adjoint}) with respect to $p_2^*(t)$ we have
 \begin{eqnarray}\label{equeg1_39}
  \left\{ \begin{aligned}& \mathrm{d}p_2^*(t)=\frac{(\theta_t^*)^2}{2} \mathrm{d}t -\theta^*_t\mathrm{d}W_{\cal H}(t) , \quad 0\le t\le T,
  \\&p_2^*(T)=\ln X^{\pi^*}_T.\end{aligned} \right.   
 \end{eqnarray}
The SDE (\ref{half_equ2}) of $\varepsilon^{\theta^*}_t$ implies that
    \begin{equation} \label{equeg1_40}
  \begin{aligned}
  \mathrm{d} \ln\varepsilon^{\theta^*}_t =-\frac{(\theta_t^*)^2}{2}\mathrm{d}t+  \theta^*_t\mathrm{d}W_{\cal H}(t).
   \end{aligned}
 \end{equation}
 By comparing (\ref{equeg1_39}) with (\ref{equeg1_40}), the solution of the BSDE (\ref{equeg1_39}) can be expressed as
     \begin{equation}  \label{equeg1_41}
  \begin{aligned}
 p_2^*(t)=p^*_2(0)-\ln \varepsilon^{\theta^*}_t.
 \end{aligned}
 \end{equation}
Denote the ${\cal H}_0$-measurable random variable $p^*_2(0)$ by $c^*_2$. Substituting the terminal condition in (\ref{equeg1_39}), i.e., $p_2^*(T)=\ln X^{\pi^*}_T$, into (\ref{equeg1_41}) with $t=T$, we have
     \begin{equation} \label{equeg1_42fb}
  \begin{aligned}
\ln\big( \varepsilon^{\theta^*}_TX_T^{\pi^*}\big)=c^*_2.
 \end{aligned}
 \end{equation} 

Since $\varepsilon^{\theta^*}_t$ is an ${\cal H}_t$-martingale, we have $\varepsilon^{\theta^*}_t=\mathbb{E}\left[ \varepsilon^{\theta^*}_T  |{\cal H}_t\right]=\mathbb{E}\left[ e^{c^*_2} (X^{\pi^*}_T)^{-1} \big{|}{\cal H}_t\right]$ by (\ref{equeg1_42fb}). Using that $\varepsilon^{\theta^*}_0=1$ we obtain
 \begin{equation}  \label{anequaboutc2}
  \begin{aligned}
  e^{c^*_2}=\frac{1}{\mathbb{E}\left[  (X^{\pi^*}_T)^{-1} |{\cal H}_0\right]}.
   \end{aligned}
 \end{equation} 
Thus, we can give the expression of $\varepsilon^{\theta^*}_t$ as follows
 \begin{equation}  \label{anequaboutc22}
  \begin{aligned}
  \varepsilon^{\theta^*}_t=\mathbb{E}\left[ \left(\mathbb{E}\left[  (X^{\pi^*}_T)^{-1} |{\cal H}_0\right] X^{\pi^*}_T  \right)^{-1} \big{|}{\cal H}_t\right].
   \end{aligned}
 \end{equation} 
Moreover, substituting (\ref{anequaboutc2}) into (\ref{equeg1_42fb}) we obtain
\begin{equation}  \label{anequaboutc222}
  \begin{aligned}
  \varepsilon^{\theta^*}_T= \frac{1}{  \mathbb{E}\left[  (X^{\pi^*}_T)^{-1} |{\cal H}_0\right] X^{\pi^*}_T  }.
   \end{aligned}
 \end{equation} 

On the other hand, by the Hamiltonian system (\ref{half_equ3_0}) in Theorem \ref{mainth4}, we have
 \begin{equation}\begin{aligned} 
\frac{\partial H^*}{\partial \pi}(t)=   \left(\frac{\partial \mu}{\partial x}(t,\pi^*_t)+(\mu(t,\pi^*_t)-r_t)+\sigma_t\phi^*_t  \right )X^{\pi^*}_t p_1^*(t) + \sigma_t X^{\pi^*}_t q^*_{1}(t)  =0,
\end{aligned}  \end{equation} 
which implies that
  \begin{equation}\label{generalconti_v}
  \begin{aligned}
    \left(\frac{\partial \mu}{\partial x}(t,\pi^*_t)+(\mu(t,\pi^*_t)-r_t)+\sigma_t\phi^*_t  \right )p_1^*(t) + \sigma_tq^*_{1}(t) =0 .
  \end{aligned}
 \end{equation}
 Substituting (\ref{generalconti_v}) into the adjoint BSDE (\ref{adjoint0}) with respect to $p_1^*(t)$ yields
 \begin{eqnarray}\label{idontknow0}
   \left\{ \begin{aligned}&   \mathrm{d}p_1^*(t)=-\left[r_t-\frac{(\sigma_t-\tilde\sigma_t)\sigma_t\pi^*_t}{2} \right]p_1^*(t)\mathrm{d}t-\left[\frac{\sigma_t-\tilde\sigma_t}{2}(1+\pi^*_t)+\tilde\phi_t\right]p_1^*(t)\mathrm{d}W_{\cal H}(t), \quad 0\le t\le T,
  \\&  p_1^*(T)=\varepsilon^{\theta^*}_T (X_T^{\pi^*})^{-1}.\end{aligned} \right.  
 \end{eqnarray}
Suppose $\varrho_t\equiv 0$, i.e., $\sigma_t-\tilde\sigma_t\equiv0$, then (\ref{idontknow0}) degenerates to
  \begin{eqnarray}\label{idontknow}
   \left\{ \begin{aligned}&   \mathrm{d}p_1^*(t)=- r_t p_1^*(t)\mathrm{d}t- \tilde\phi_t p_1^*(t)\mathrm{d}W_{\cal H}(t) , \quad 0\le t\le T,
  \\&  p_1^*(T)=\varepsilon^{\theta^*}_T(X_T^{\pi^*})^{-1},\end{aligned} \right.  
 \end{eqnarray}
which implies that all coefficients in (\ref{idontknow}) are independent of $\pi^*$. Then the unique solution of (\ref{idontknow}) is given by
   \begin{equation} \label{ireallydontknow}
  \begin{aligned}
 p_1^*(t)=c^*_1 \Pi^*(0,t),
  \end{aligned}
 \end{equation}
 where $c_1^*:=p^*_1(0)$ is an ${\cal H}_0$-measurable random variable, and $\Pi^*(t_1,t_2)$, $0\le t_1\le t_2\le T$, is defined as
    \begin{equation} \label{definitionofPi}
  \begin{aligned}
\Pi^*(t_1,t_2):= \exp\Bigg\{ & -\int_{t_1}^{t_2} r_s  \mathrm{d}s
  -\int_{t_1}^{t_2} \tilde\phi_s  \mathrm{d}W_{\cal H}(s)
    -\frac{1}{2}\int_{t_1}^{t_2}    \tilde\phi_s^2\mathrm{d}s \Bigg\}.
  \end{aligned}
 \end{equation}
Substituting (\ref{ireallydontknow}) into (\ref{idontknow}) with $t=T$ we have
    \begin{equation} \label{dffunctionofu}
  \begin{aligned}
 X^{\pi^*}_T= \frac{\varepsilon^{\theta^*}_T}{c_1^*\Pi^*(0,T)}.
  \end{aligned}
 \end{equation}
 Combining (\ref{dffunctionofu}) with (\ref{anequaboutc222}) we have
  \begin{equation} \label{dffunctionofu222}
  \begin{aligned}
 X^{\pi^*}_T=  \frac{1}{\sqrt{ c_3^*\Pi^*(0,T)}},
  \end{aligned}
 \end{equation}
 where $c_3^*:=c^*_1\mathbb{E}\left[ (X^{\pi^*}_T)^{-1}  |{\cal H}_0\right]$ is also an ${\cal H}_0$-measurable random variable.

Put $  z^*_t=   \sigma_t\pi^*_tX^{\pi^*}_t$. Then we have
  \begin{equation}\label{bsde_barzt1}
   \begin{aligned}
 &\pi_t^*=\frac{  z^*_t}{\sigma_tX^{\pi^*}_t}.   
\end{aligned} 
 \end{equation}
The SDE (\ref{wealthsde2}) of $X_t^{\pi^*}$ combined with (\ref{bsde_barzt1}) leads to the following linear BSDE
  \begin{eqnarray}\label{final_gex_1}
  \left\{ \begin{aligned}&\mathrm{d}X^{\pi^*}_t=-f_{\text{L}}(t,X^{\pi^*}_t,z^*_t,\omega)\mathrm{d}t+z^*_t\mathrm{d}W_{\cal H}(t), \quad 0\le t\le T,
  \\& X^{\pi^*}_T=\frac{1}{\sqrt{ c_3^*\Pi^*(0,T)}},\end{aligned} \right.   
 \end{eqnarray}
where the generator (or the driver) $f_{\text{L}}:[0,T]\times \mathbb{R}\times \mathbb{R}\times\Omega\rightarrow \mathbb{R}$  is given by
 \begin{equation*} 
\begin{aligned}
f_{\text{L}}(t,x,z,\omega)=-r_tx-\tilde\phi_tz.
\end{aligned} 
 \end{equation*}
 
 \begin{remark} \label{explain_log_half}
If $\varrho_t\neq 0$, the terminal condition in BSDE (\ref{final_gex_1}) will depend on $z^*_t$  by (\ref{idontknow0}), which makes the BSDE (\ref{final_gex_1}) irregular and very hard to solve. The reason is that the SDE (\ref{wealthsde2}) of $X^{\pi^*}$ is not homogeneous in this situation. However, once the relationship (equations (\ref{halfequ_21}) in Theorem \ref{mainth6}) of $\pi^* $ and $\theta^*$ is solved, we can then overcome this situation by a combined method in Section \ref{sec_example1}.
 \end{remark}
 
By the It\^{o} formula for It\^{o} integrals, we have
 \begin{equation}\label{solutionoflbsde}
 \begin{aligned}
\mathrm{d}\left(\Pi^*(0,t)X^{\pi^*}_t\right)&=\Pi^*(0,t)\mathrm{d}X^{\pi^*}_t+X^{\pi^*}_{t}\mathrm{d}\Pi^*(0,t)+\mathrm{d}\big\langle X^{\pi^*},\Pi^*(0,\cdot)\big\rangle_t\\
&=\Pi^*(0,t) \left( z^*_t-\tilde\phi_tX^{\pi^*}_t  \right)\mathrm{d}W_{\cal H}(t).
\end{aligned}
\end{equation}
Suppose the following integrability condition holds
 \begin{equation}\label{linearbsde_bdg}
 \begin{aligned}
\mathbb{E}\Bigg[\left(\int_0^T  \Pi^*(0,t)^2 \left(z^*_t-\tilde\phi_tX^{\pi^*}_t\right)^2\mathrm{d}t \right)^{\frac{1}{2}}+ (X^{\pi^*}_T)^2 \Bigg]<\infty.
\end{aligned}
\end{equation}
Then by the Burkholder-Davis-Gundy inequality (see \cite{Karatzas91}), $\Pi^*(0,t)X^{\pi^*}_t$, $t\in[0,T]$, is an ${\cal H}_t$-martingale. By taking the expectation in (\ref{solutionoflbsde}) we have
 \begin{equation}\label{linearbsde_bdg_2}
 \begin{aligned}
 X^{\pi^*}_t= \frac{1}{\sqrt{c_3^*}}\mathbb{E} \left[ \frac{   \Pi^*(t,T) }{\sqrt{  \Pi^*(0,T)}} | {\cal H}_t\right].
\end{aligned}
\end{equation}
Substituting (\ref{linearbsde_bdg_2}) into the initial value condition $X^{\pi^*}_0=X_0$ with $t=0$ yields
 \begin{equation} \label{c3determined}
 \begin{aligned}
\frac{1}{\sqrt{c^*_3}}=\frac{X_0}{\mathbb{E} \left[  \sqrt{  \Pi^*(0,T)} | {\cal H}_0\right]}.
\end{aligned}
\end{equation}
Substituting (\ref{c3determined}) into (\ref{final_gex_1}) yields
  \begin{eqnarray}\label{final_gex_1_i}
  \left\{ \begin{aligned}&\mathrm{d}X^{\pi^*}_t=-f_{\text{L}}(t,X^{\pi^*}_t,z^*_t,\omega)\mathrm{d}t+z^*_t\mathrm{d}W_{\cal H}(t), \quad 0\le t\le T,
  \\& X^{\pi^*}_T=\frac{X_0}{\mathbb{E} \left[  \sqrt{  \Pi^*(0,T)} | {\cal H}_0\right]\sqrt{\Pi^*(0,T)}},\end{aligned} \right.   
 \end{eqnarray}
where the generator $f_{\text{L}}:[0,T]\times \mathbb{R}\times \mathbb{R}\times\Omega\rightarrow \mathbb{R}$  is given by
 \begin{equation*} 
\begin{aligned}
f_{\text{L}}(t,x,z,\omega)=-r_tx-\tilde\phi_tz.
\end{aligned} 
 \end{equation*}
Combining (\ref{linearbsde_bdg_2}) with (\ref{c3determined}) we obatin
 \begin{equation}\label{linearbsde_bdg_2_ii}
 \begin{aligned}
 X^{\pi^*}_t=\frac{X_0\mathbb{E} \left[   \sqrt{  \Pi^*(t,T)}  | {\cal H}_t\right]}{\mathbb{E} \left[  \sqrt{  \Pi^*(0,T)} | {\cal H}_0\right]\sqrt{  \Pi^*(0,t)}},
\end{aligned}\end{equation}
and
 \begin{equation}\label{linearbsde_bdg_2_iiT}
 \begin{aligned}
 X^{\pi^*}_T=\frac{X_0 }{\mathbb{E} \left[  \sqrt{  \Pi^*(0,T)} | {\cal H}_0\right]\sqrt{  \Pi^*(0,T)}}.
\end{aligned}
\end{equation}

On the other hand, substituting (\ref{linearbsde_bdg_2_ii}) into (\ref{anequaboutc22}) and (\ref{anequaboutc222}) we obtain
 \begin{equation}  \label{anequaboutc22_final}
  \begin{aligned}
  \varepsilon^{\theta^*}_t= \frac{\mathbb{E}\left[     { \sqrt{\Pi^*(0,T)}} \big{|}{\cal H}_t\right]} { \mathbb{E} \left[  \sqrt{  \Pi^*(0,T)} \big| {\cal H}_0\right]  },
   \end{aligned}
 \end{equation} 
and
\begin{equation}  \label{anequaboutc222_final}
  \begin{aligned}
  \varepsilon^{\theta^*}_T=\frac{  { \sqrt{\Pi^*(0,T)}} } { \mathbb{E} \left[  \sqrt{  \Pi^*(0,T)} \big| {\cal H}_0\right]  }.
   \end{aligned}
 \end{equation} 
Moreover, $\theta^*$ can be obtained by the equation (\ref{halfequ_21}) in Theorem \ref{mainth6} as follows
 \begin{equation}\label{reduceth6ii} 
 \begin{aligned}
 \theta_t^*=  \sigma_t\pi^*_t-\tilde\phi_t= \frac{  z^*_t}{X^{\pi^*}_t}-\tilde\phi_t. 
  \end{aligned}
 \end{equation}

By (\ref{part1ofJ}), (\ref{equeg1_42fb}), (\ref{anequaboutc2}) and (\ref{final_gex_1_i}), we can calculate the value of Problem \ref{sdg} as follows
\begin{equation}  \label{dontknowv}
  \begin{aligned}
 V&=\mathbb{E}\left[\varepsilon_T^{\theta^*}\ln \left( \varepsilon^{\theta^*}_TX^{\pi^*}_T  \right)    \right]  
 =\mathbb{E}\left[\varepsilon_T^{\theta^*} c^*_2 \right] =\mathbb{E}\left[-\varepsilon_T^{\theta^*}\ln \mathbb{E} [  (X_T^{\pi^*})^{-1}|{\cal H}_0  ] \right]\\
 &=-\mathbb{E}\left[ \mathbb{E}\left[\varepsilon_T^{\theta^*}\ln \mathbb{E} [  (X_T^{\pi^*})^{-1}|{\cal H}_0  ]\big{|} {\cal H}_0 \right]  \right]=-\mathbb{E}\left[ \ln \mathbb{E} [  (X_T^{\pi^*})^{-1}|{\cal H}_0  ]\mathbb{E}\left[\varepsilon_T^{\theta^*} \big{|} {\cal H}_0 \right]  \right]\\
 &=-\mathbb{E}\left[ \ln \mathbb{E} [  (X_T^{\pi^*})^{-1}|{\cal H}_0  ]  \right]=\ln X_0-2\mathbb{E}\left[ \ln \mathbb{E}( \sqrt{\Pi^*(0,T)}  |{\cal H}_0)  \right].
   \end{aligned}
 \end{equation} 
 
Further, if the filtration $\{{\cal H}_t\}_{0\le t\le T}$ is the augmentation of the natural filtration of $W_{\cal H}(t)$, which was also assumed in \cite{Biagini05}, then ${\cal H}_0$ is generated by the trivial $\sigma$-algebra and all $\mathbb{P}$-negligible sets. Suppose that the right hand of (\ref{linearbsde_bdg_2_iiT}) is $L^2$-integrable, and $r$ and $\tilde \phi$ are bounded. Then by \cite[Theorem 4.8]{Oksendal19}, the linear BSDE (\ref{final_gex_1_i}) has a unique strong solution $(X^{\pi^*},z^*)$, i.e., $X^{\pi^*}_t$ is a continuous ${\cal H}_t$-adapted process with $\mathbb{E}\left[\sup_{0\le t\le T}|X^{\pi^*}_t|^2 \right]<\infty$, $z^*_t$ is a measurable ${\cal H}_t$-adapted process with $\mathbb{E}\left[ \int_0^T|z^*_t|^2\mathrm{d}t \right]<\infty$, and $(X^{\pi^*},z^*)$ satisfies the BSDE (\ref{final_gex_1}). 

\begin{remark}
Under mild conditions, we can obtain the formulae for $  z^*_t $ as follows (see \cite[Proposition 3.5.1]{Delong13})
 \begin{eqnarray}\label{final_ex_bmoofz1}
  \begin{aligned}& z^*_t=D_t X_t^{\pi^*},
  \end{aligned}  
 \end{eqnarray}
 where $D_t$ is the Malliavin gradient operator from the Sobolev space $D^{1,2}(\Omega)$ to $L^2(\Omega\times[0,T])$. 
\end{remark}

To sum up, we give the following theorem.
 
    \begin{theorem} \label{ex_mainth_idontknow}
Assume that $\mu(t,x)=\mu_0(t) $ for some ${\cal F}_t$-adapted measurable process $\mu_0(t)$, and $g(\theta)=\frac{1}{2} \theta^2$. Suppose $(\pi^*,\theta^*)\in {\cal A}_1'\times{\cal A}_2'$ is optimal for Problem \ref{sdg} under the conditions of Theorem \ref{mainth4}. Suppose Assumption \ref{extraa1} and the integrability condition (\ref{linearbsde_bdg}) hold. Then $\pi^*$, $\theta^*$, $X^{\pi^*}_t$ and $\varepsilon^{\theta^*}_t$ are given by (\ref{bsde_barzt1}), (\ref{reduceth6ii}) , (\ref{linearbsde_bdg_2_ii}) and (\ref{anequaboutc22_final}), respectively, where $\Pi^*$ is given by (\ref{definitionofPi}), and $(X^{\pi^*},  z^*)$ solves the linear BSDE (\ref{final_gex_1_i}).The value $V$ is given by (\ref{dontknowv}). Furthermore, if $\{{\cal H}_t\}_{0\le t\le T}$ is the augmentation of the natural filtration of $W_{\cal H}(t)$, the right hand of (\ref{linearbsde_bdg_2_iiT}) is $L^2$-integrable, and $r$ and $\tilde \phi$ are bounded, then the linear BSDE (\ref{final_gex_1_i}) has a unique strong solution, and $z^*$ is given by (\ref{final_ex_bmoofz1}) under mild conditions.
 \end{theorem}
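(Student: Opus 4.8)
The statement consolidates the computations carried out in the displays (\ref{logconti_v})--(\ref{dontknowv}), so the plan is to run those steps through in order and then settle the ``furthermore'' clause with standard linear-BSDE theory. First I would apply Theorem \ref{mainth4} — whose hypotheses are exactly the ``conditions of Theorem \ref{mainth4}'' assumed here — to obtain the two stationarity conditions $\partial H^*/\partial\theta(t)=0$ and $\partial H^*/\partial\pi(t)=0$. The first reads $\theta^*_t=-q^*_2(t)$, which closes the adjoint equation (\ref{adjoint}) for $p^*_2$ into (\ref{equeg1_39}); integrating $\mathrm{d}\bigl(p^*_2(t)+\ln\varepsilon^{\theta^*}_t\bigr)$, which vanishes by (\ref{equeg1_39})--(\ref{equeg1_40}), and using $\varepsilon^{\theta^*}_0=1$ gives $p^*_2(t)=c^*_2-\ln\varepsilon^{\theta^*}_t$ with $c^*_2:=p^*_2(0)$ an ${\cal H}_0$-measurable random variable, hence the identity $\ln\bigl(\varepsilon^{\theta^*}_TX^{\pi^*}_T\bigr)=c^*_2$ of (\ref{equeg1_42fb}). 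The ${\cal H}_t$-martingale property of $\varepsilon^{\theta^*}$ together with $\varepsilon^{\theta^*}_0=1$ then pins down $e^{c^*_2}$ through (\ref{anequaboutc2}) and yields the representations (\ref{anequaboutc22})--(\ref{anequaboutc222}) of $\varepsilon^{\theta^*}$ in terms of $X^{\pi^*}$.

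Second, I would use the standing assumption $\mu(t,x)=\mu_0(t)$, i.e.\ $\varrho_t\equiv0$: this is precisely what degenerates the adjoint equation (\ref{idontknow0}) for $p^*_1$ into (\ref{idontknow}), whose coefficients no longer involve $\pi^*$, so the general solution is the exponential $p^*_1(t)=c^*_1\Pi^*(0,t)$ of (\ref{ireallydontknow}) with $\Pi^*$ as in (\ref{definitionofPi}). Matching the terminal condition gives $X^{\pi^*}_T=\varepsilon^{\theta^*}_T\bigl(c^*_1\Pi^*(0,T)\bigr)^{-1}$, and combining with (\ref{anequaboutc222}) collapses this to $X^{\pi^*}_T=\bigl(c^*_3\Pi^*(0,T)\bigr)^{-1/2}$ for an ${\cal H}_0$-measurable $c^*_3$. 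Setting $z^*_t=\sigma_t\pi^*_tX^{\pi^*}_t$ rewrites the wealth dynamics (\ref{wealthsde2}) as the linear BSDE (\ref{final_gex_1}); applying It\^o's formula to $\Pi^*(0,t)X^{\pi^*}_t$ as in (\ref{solutionoflbsde}) exhibits this product as a local martingale, which the integrability hypothesis (\ref{linearbsde_bdg}) upgrades, via Burkholder--Davis--Gundy, to a true ${\cal H}_t$-martingale; taking conditional expectations gives (\ref{linearbsde_bdg_2}), and imposing $X^{\pi^*}_0=X_0$ determines $c^*_3$ through (\ref{c3determined}). This produces the closed forms (\ref{linearbsde_bdg_2_ii})--(\ref{linearbsde_bdg_2_iiT}) and the BSDE (\ref{final_gex_1_i}); back-substitution into (\ref{anequaboutc22})--(\ref{anequaboutc222}) gives (\ref{anequaboutc22_final})--(\ref{anequaboutc222_final}), while $\theta^*$ follows from the half-characterization (\ref{halfequ_21}) of Theorem \ref{mainth6}, specialized to $\theta^*_t=\sigma_t\pi^*_t-\tilde\phi_t=z^*_t/X^{\pi^*}_t-\tilde\phi_t$ of (\ref{reduceth6ii}). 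Finally $V$ is obtained as in (\ref{dontknowv}): write $V=\mathbb{E}\bigl[\varepsilon^{\theta^*}_T\ln(\varepsilon^{\theta^*}_TX^{\pi^*}_T)\bigr]=\mathbb{E}\bigl[\varepsilon^{\theta^*}_Tc^*_2\bigr]$ using (\ref{part1ofJ}) and (\ref{equeg1_42fb}), condition on ${\cal H}_0$, use $\mathbb{E}[\varepsilon^{\theta^*}_T\mid{\cal H}_0]=1$, and insert (\ref{anequaboutc2}) and (\ref{c3determined}).

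For the last assertion, when $\{{\cal H}_t\}$ is the augmentation of the natural filtration of $W_{\cal H}$ the $\sigma$-algebra ${\cal H}_0$ is trivial, so the right-hand side of (\ref{linearbsde_bdg_2_iiT}) is a genuine ${\cal H}_T$-measurable terminal datum; under its $L^2$-integrability and boundedness of $r$ and $\tilde\phi$ the driver $f_{\text{L}}(t,x,z)=-r_tx-\tilde\phi_tz$ is Lipschitz with square-integrable data, so \cite[Theorem~4.8]{Oksendal19} supplies the unique strong solution $(X^{\pi^*},z^*)$ in the stated spaces, and $z^*_t=D_tX^{\pi^*}_t$ follows from \cite[Proposition~3.5.1]{Delong13} once $X^{\pi^*}_T\in D^{1,2}(\Omega)$. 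I expect the only genuinely delicate points to be bookkeeping: verifying that all of Assumptions \ref{assump1}--\ref{assump4} and the integrability requirements in Theorem \ref{mainth4} are consistent with the explicit triple $(X^{\pi^*},z^*,\theta^*)$ just produced — so that the necessary conditions were legitimately applicable — and checking that the identifications of $p^*_1$ and $p^*_2$ with the exponential processes above are valid in the ${\cal L}^{1,2}_{\text{loc}}$ setting rather than merely formal. Apart from that, the only new analytic input beyond reassembling the earlier displays is the appeal to linear-BSDE well-posedness in the final paragraph.
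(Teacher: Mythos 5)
Your proposal is correct and follows essentially the same route as the paper: the Hamiltonian stationarity conditions close the two adjoint BSDEs, the comparison with $\ln\varepsilon^{\theta^*}$ and the degeneration of (\ref{idontknow0}) under $\varrho\equiv0$ yield the terminal datum $X^{\pi^*}_T=(c^*_3\Pi^*(0,T))^{-1/2}$, the martingale argument for $\Pi^*(0,t)X^{\pi^*}_t$ under (\ref{linearbsde_bdg}) fixes $c^*_3$ and produces the closed forms, and the final clause is the same appeal to linear-BSDE well-posedness and the Malliavin representation of $z^*$. No substantive differences from the paper's own derivation.
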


\begin{remark}
If the filtration $\{{\cal H}_t\}_{0\le t\le T}$ in Theorem \ref{ex_mainth_idontknow} is not  the augmentation of the natural filtration of $W_{\cal H}(t)$, or the coefficients of the generator $f_{\text{L}}$ is not necessarily bounded, we refer to \cite{Eyraoud05,Li06,Lu13,Wang07} for further results. In those cases, the existence and uniqueness of the solution to the BSDE (\ref{final_gex_1_i}) still hold under mild conditions when a general martingale representation property was assumed, or a transposition solution was considered, or a stochastic Lipschitzs condition was considered. 
\end{remark}

When the investor has no insider information, i.e., ${\cal H}_t={\cal F}_t$, we have $\phi=0$. Assume further that  the utility function is of the logarithmic form, i.e., $U(x)=\ln x $,  and all the parameter processes are assumed to be deterministic bounded functions. Then by (\ref{final_ex_bmoofz1}), we have
   \begin{equation}  
 \begin{aligned}
z_t^*&=D_tX^{\pi^*}_t\\
&=\frac{X_0\mathbb{E}\left[ D_t\sqrt{\Pi^*(t,T)  } |{\cal F}_t   \right]}{  \mathbb{E}\sqrt{\Pi^*(0,T) }\sqrt{\Pi^*(0,t)}}-\frac{1}{2} \frac{X_0\mathbb{E}\left[ \sqrt{\Pi^*(t,T)  } |{\cal F}_t   \right]}{  \mathbb{E}\sqrt{\Pi^*(0,T) }\Pi^*(0,t)^{\frac{3}{2}}}D^1_t\Pi^*(0,t)\\
&= \frac{1}{2} \frac{X_0\mathbb{E}\left[ \sqrt{\Pi^*(t,T)   } |{\cal F}_t   \right]}{  \mathbb{E}\sqrt{\Pi^*(0,T) }\sqrt{\Pi^*(0,t)}}\iota_t.
\end{aligned}
\end{equation}

By (\ref{bsde_barzt1}) we can obtain the robust optimal strategy
  \begin{equation}  \label{conti_noinsider}
 \begin{aligned}
 \pi^*_t&= \frac{\mu_0(t)-r_t}{2\sigma^2_t}.
\end{aligned}
\end{equation}

By the equation (\ref{halfequ_21}) in Theorem \ref{mainth6}, we can also obtain
 \begin{equation}\label{bsde_barzt1_donsktheta_0}
   \begin{aligned}
 &\theta_t^*=-\frac{\mu_0(t)-r_t}{2\sigma_t}. 
\end{aligned} 
 \end{equation}
 
By (\ref{dontknowv}), we obtain the value of Problem \ref{sdg} as follows
\begin{equation}  \label{value1}
  \begin{aligned}
 V&= \ln X_0-\ln  \left(\mathbb{E}\sqrt{\Pi^*(0,T)} \right)^2 \\
 &=\ln X_0+\int_0^Tr_t\mathrm{d}t+\frac{1}{4}\int_0^T\left( \frac{\mu_0(t)-r_t}{\sigma_t}    \right)^2\mathrm{d}t.
   \end{aligned}
 \end{equation} 
 
To sum up, we have the following result.

    \begin{corollary} \label{corex_mainth_idontknow_donsker}
Assume that $\mu(t,x)=\mu_0(t) $ for some bounded function $\mu_0(t)$, and $g(\theta)=\frac{1}{2} \theta^2 $. Assume further that ${\cal H}_t={\cal F}_t$ and all parameter processes are deterministic bounded functions. Suppose $(\pi^*,\theta^*)\in {\cal A}_1'\times{\cal A}_2'$ is optimal for Problem \ref{sdg} under the conditions of Theorem \ref{mainth4}. Then $(\pi^*,\theta^*)$ is given by (\ref{conti_noinsider}) and (\ref{bsde_barzt1_donsktheta_0}), and the value $V$ is given by (\ref{value1}).
 \end{corollary}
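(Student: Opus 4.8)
The plan is to read Corollary~\ref{corex_mainth_idontknow_donsker} as the deterministic-coefficient, no-insider specialization of Theorem~\ref{ex_mainth_idontknow}, so that the bulk of the work is already available and what remains is to verify the hypotheses in this setting and to evaluate the general formulae explicitly.

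First I would record the simplifications forced by ${\cal H}_t={\cal F}_t$. Since $W$ is already an ${\cal F}_t$-Brownian motion, the uniqueness of the canonical decomposition in Theorem~\ref{mainth3} forces $\phi\equiv 0$ and $W_{\cal H}=W$; combined with $\varrho_t\equiv 0$ this gives $\tilde\sigma_t=\sigma_t$ and $\tilde\phi_t=\iota_t=\frac{\mu_0(t)-r_t}{\sigma_t}$. As all parameter processes are deterministic and bounded, Assumption~\ref{extraa1} and the integrability condition~(\ref{linearbsde_bdg}) hold automatically, the terminal random variable in~(\ref{linearbsde_bdg_2_iiT}) is square-integrable, and ${\cal F}_0$ is trivial; hence all hypotheses of Theorem~\ref{ex_mainth_idontknow} are met, the linear BSDE~(\ref{final_gex_1_i}) has a unique strong solution, and $z^*_t=D_tX^{\pi^*}_t$.

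Next I would make the formulae explicit. Because the coefficients are deterministic, $\Pi^*(t,T)$ depends only on the increments of $W$ on $(t,T]$ and is therefore independent of ${\cal F}_t$, so $\mathbb{E}[\sqrt{\Pi^*(t,T)}\mid{\cal F}_t]$ is a deterministic constant. Differentiating the expression~(\ref{linearbsde_bdg_2_ii}) for $X^{\pi^*}_t$ with $D_t$ then only hits the factor $\Pi^*(0,t)^{-1/2}$: the contribution of $D_t\sqrt{\Pi^*(t,T)}$ vanishes since $t\notin(t,T]$, and $D_t\Pi^*(0,t)=-\iota_t\Pi^*(0,t)$ yields $z^*_t=\frac{1}{2}\iota_tX^{\pi^*}_t$. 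Then~(\ref{bsde_barzt1}) gives $\pi^*_t=\frac{z^*_t}{\sigma_tX^{\pi^*}_t}=\frac{\iota_t}{2\sigma_t}=\frac{\mu_0(t)-r_t}{2\sigma^2_t}$, which is~(\ref{conti_noinsider}); inserting this into~(\ref{reduceth6ii}), i.e.\ into equation~(\ref{halfequ_21}) of Theorem~\ref{mainth6}, gives $\theta^*_t=\sigma_t\pi^*_t-\tilde\phi_t=\frac{\mu_0(t)-r_t}{2\sigma_t}-\frac{\mu_0(t)-r_t}{\sigma_t}=-\frac{\mu_0(t)-r_t}{2\sigma_t}$, which is~(\ref{bsde_barzt1_donsktheta_0}).

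Finally, for the value I would start from~(\ref{dontknowv}), which with trivial ${\cal H}_0$ reads $V=\ln X_0-2\ln\mathbb{E}\sqrt{\Pi^*(0,T)}$. Since $\sqrt{\Pi^*(0,T)}=\exp\bigl\{-\frac{1}{2}\int_0^Tr_s\,\mathrm{d}s-\frac{1}{2}\int_0^T\tilde\phi_s\,\mathrm{d}W_s-\frac{1}{4}\int_0^T\tilde\phi_s^2\,\mathrm{d}s\bigr\}$ and $\int_0^T\tilde\phi_s\,\mathrm{d}W_s$ is centered Gaussian with variance $\int_0^T\tilde\phi_s^2\,\mathrm{d}s$, the elementary lognormal-moment identity gives $\mathbb{E}\sqrt{\Pi^*(0,T)}=\exp\bigl\{-\frac{1}{2}\int_0^Tr_s\,\mathrm{d}s-\frac{1}{8}\int_0^T\tilde\phi_s^2\,\mathrm{d}s\bigr\}$, whence $V=\ln X_0+\int_0^Tr_t\,\mathrm{d}t+\frac{1}{4}\int_0^T\tilde\phi_t^2\,\mathrm{d}t$; substituting $\tilde\phi_t=\frac{\mu_0(t)-r_t}{\sigma_t}$ gives exactly~(\ref{value1}). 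The only genuinely delicate points are the Malliavin-calculus step — justifying the representation $z^*_t=D_tX^{\pi^*}_t$ and the vanishing of the lower-endpoint term — together with the routine but necessary check that the regularity hypotheses inherited from Theorems~\ref{mainth4} and~\ref{ex_mainth_idontknow} really do hold under deterministic bounded coefficients; everything else reduces to direct substitution and one Gaussian expectation.
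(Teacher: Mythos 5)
Your proposal is correct and follows essentially the same route as the paper: specialize Theorem \ref{ex_mainth_idontknow} with $\phi\equiv 0$, compute $z^*_t=D_tX^{\pi^*}_t=\tfrac{1}{2}\iota_tX^{\pi^*}_t$ from (\ref{linearbsde_bdg_2_ii}) (with the $D_t\sqrt{\Pi^*(t,T)}$ term dropped), read off $\pi^*$ from (\ref{bsde_barzt1}) and $\theta^*$ from (\ref{halfequ_21}), and evaluate $V$ in (\ref{dontknowv}) via the Gaussian moment of $\sqrt{\Pi^*(0,T)}$. The only difference is that you explicitly verify the inherited hypotheses (Assumption \ref{extraa1}, condition (\ref{linearbsde_bdg}), triviality of ${\cal F}_0$), which the paper leaves implicit.
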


\subsection{A particular case}\label{Sub_particular1}
Next, we give a particular case to derive the closed form of the robust optimal strategy. Assume that the filtration is of initial enlargement type, i.e.,  
\begin{equation}\label{donsker_filtration}
{\cal H}_t=\bigcap_{s>t}({\cal F}_s\vee Y_0):=\bigcap_{s>t}\left({\cal F}_s\vee \int_0^{T_0}\varphi_{u}\mathrm{d}W_{u}\right),\quad 0\le t\le T,
\end{equation}
for some $T_0>T$, and all the parameter processes are assumed to be deterministic bounded functions. Here, $\varphi_t $ is some deterministic function satisfying $\Vert \varphi\Vert^2_{[s,t]}:=\int_s^{t}\varphi_u^2    \mathrm{d}u<\infty$ for all $0\le s\le t\le T_0$, and $\Vert \varphi\Vert^2_{[T,T_0]}>0$. 

In this situation, each ${\cal H}_t$-adapted process $x_t$, $t\in[0,T]$, has the form $x_t=x_1(t,Y_0,\omega)$ for some function $x_1: [0,T]\times\mathbb{R}\times\Omega\rightarrow \mathbb{R}$ such that $x_1(t,y)$ is ${\cal F}_t$-adapted for every $y\in\mathbb{R}$.  For simplicity, we write $x$ instead of $x_1$ in the sequel. To solve the anticipating linear BSDE (\ref{final_gex_1}), we need to introduce some white noise techniques in Malliavin calculus (see \cite{Draouil15,Huang00,DiNunno09}).

\begin{definition}[Donsker $\delta$ functional]\label{definitionofdonsker}
Let $Y:\Omega\rightarrow \mathbb{R}$ be a random variable which belongs to the distribution space $({\cal S})^{-1}$ (see \cite{Huang00} for the definition). Then a  continuous linear operator $\delta_{\cdot}(Y):\mathbb{R}\rightarrow ({\cal S})^{-1}$ is called a Donsker $\delta$ functional of $Y$ if it has the property that
\begin{equation*}
 \int_\mathbb{R}f(y)\delta_y(Y)\mathrm{d}y=f(Y)
\end{equation*}
for all Borel measurable functions $f:\mathbb{R}\rightarrow\mathbb{R}$ such that the integral converges in $({\cal S})^{-1}$.
\end{definition}

The following lemma gives a sufficient condition for the existence of the Donsker $\delta$ functional. The proof can be found in \cite{DiNunno09}.

\begin{lemma} \label{lemmaofdonsker}
Let $Y: \Omega\rightarrow \mathbb{R}$ be a Gaussian random variable with mean $\bar \mu$ and variance $\bar\sigma^2>0$. Then its Donsker $\delta$ functional $\delta_{y}(Y)$ exists and is uniquely given by
\begin{equation*}
\delta_y(Y)=\frac{1}{\sqrt{2\pi\bar\sigma^2}}\exp^{\diamond}\left\{-\frac{(y-Y)^{\diamond 2}}{2\bar\sigma^2}  \right\}\in ({\cal S})'\subset ({\cal S})^{-1},
\end{equation*}
\end{lemma}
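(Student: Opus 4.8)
The plan is to derive the lemma from the characterization theory of white-noise distributions. First I would recall the three facts I will use: (a) the (white-noise) $S$-transform $S(\cdot)$ is an injective, continuous map of $(\mathcal{S})^{-1}$ — and of the smaller Hida space $(\mathcal{S})'$ — into the algebra of $U$-functionals on the test-function space; (b) it intertwines the Wick product with the pointwise product, $S(F\diamond G)=S(F)\,S(G)$, whence $S(\exp^{\diamond}F)=\exp(S(F))$; and (c) the Potthoff--Streit characterization theorem says exactly which $U$-functionals lie in the image, namely those that are ray-analytic and obey a quadratic-exponential bound (see \cite{DiNunno09,Huang00}). Next I would fix a representation of the Gaussian variable: after centering and normalizing, $Y=\bar\mu+\int_0^{T_0}\varphi_u\,\mathrm{d}W_u$ with $\int_0^{T_0}\varphi_u^2\,\mathrm{d}u=\bar\sigma^2$, which gives the elementary identity $S(Y)(\eta)=\bar\mu+\langle\varphi,\eta\rangle$ for test functions $\eta$, where $\langle\varphi,\eta\rangle:=\int\varphi_u\eta_u\,\mathrm{d}u$.

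Then I would introduce the candidate $F_y:=\frac{1}{\sqrt{2\pi\bar\sigma^2}}\exp^{\diamond}\{-(y-Y)^{\diamond 2}/(2\bar\sigma^2)\}$ and compute its $S$-transform using multiplicativity and $(y-Y)^{\diamond 2}=y^2-2yY+Y\diamond Y$, obtaining $S(F_y)(\eta)=\frac{1}{\sqrt{2\pi\bar\sigma^2}}\exp\{-(y-\bar\mu-\langle\varphi,\eta\rangle)^2/(2\bar\sigma^2)\}$. I would then check that, for each fixed $y$, this is a $U$-functional: along any ray $\eta=z\xi$ it extends to an entire function of $z\in\mathbb{C}$, and since the real part of $-(y-\bar\mu-z\langle\varphi,\xi\rangle)^2$ is bounded above by $|z|^2|\langle\varphi,\xi\rangle|^2\le\|\varphi\|_{L^2}^2\,|z|^2\|\xi\|^2$ one gets a bound of the admissible form $|S(F_y)(z\xi)|\le C\exp(K|z|^2\|\xi\|^2)$; because $\varphi\in L^2$, the relevant $\|\xi\|$ can be any test-function norm, so in fact $F_y\in(\mathcal{S})'$. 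By the characterization theorem $F_y$ is then a genuine element of $(\mathcal{S})'\subset(\mathcal{S})^{-1}$, and since the bounds are locally uniform in $y$ and $S(F_y)(\eta)$ depends real-analytically on $y$, the map $y\mapsto F_y$ is a (smooth) $(\mathcal{S})^{-1}$-valued map — in particular it is the continuous $(\mathcal{S})^{-1}$-valued object required by Definition \ref{definitionofdonsker}.

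Third, I would verify the defining identity $\int_{\mathbb{R}}f(y)F_y\,\mathrm{d}y=f(Y)$ for every Borel $f$ for which the left side converges in $(\mathcal{S})^{-1}$ (the standing hypothesis). Applying the continuous operator $S$ and interchanging it with the integral gives $S\big(\int f(y)F_y\,\mathrm{d}y\big)(\eta)=\int_{\mathbb{R}}f(y)\,\frac{1}{\sqrt{2\pi\bar\sigma^2}}e^{-(y-m)^2/(2\bar\sigma^2)}\,\mathrm{d}y$ with $m:=\bar\mu+\langle\varphi,\eta\rangle$. On the other hand, a translation (Girsanov) computation — under the normalized exponential weight defining the $S$-transform, $W$ is shifted by the primitive of $\eta$, so $Y$ acquires the law of $m+G$ with $G\sim N(0,\bar\sigma^2)$ — yields $S(f(Y))(\eta)=\mathbb{E}[f(m+G)]=\int_{\mathbb{R}}f(y)\,\frac{1}{\sqrt{2\pi\bar\sigma^2}}e^{-(y-m)^2/(2\bar\sigma^2)}\,\mathrm{d}y$. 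The two $S$-transforms coincide, so injectivity of $S$ gives the identity in $(\mathcal{S})^{-1}$. Uniqueness follows from the same injectivity: any two Donsker functionals of $Y$ must have equal $S$-transforms for almost every $y$ — seen by testing the defining relation against indicators of intervals and using continuity in $y$ — hence agree.

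The step I expect to be the main obstacle is the verification that $F_y$ actually lies in $(\mathcal{S})'$, i.e., checking the hypotheses of the characterization theorem. The delicate point is the minus sign inside the Wick square: after the complex substitution $\eta=z\xi$ the exponent $-(y-\bar\mu-z\langle\varphi,\xi\rangle)^2$ has a real part that need not be $\le 0$, so one must argue it still produces only the permitted quadratic-exponential growth — which it does precisely because $\varphi$ is square-integrable, so $|\langle\varphi,\xi\rangle|$ is controlled by a test-function norm of $\xi$. A secondary, routine point is justifying the exchange of $S$ with the $(\mathcal{S})^{-1}$-valued (Bochner-type) integral $\int f(y)F_y\,\mathrm{d}y$ and the translation formula for $S(f(Y))$; both rely only on continuity of $S$ and the standard behaviour of $W$ under the $S$-transform, and — since convergence of the integral is already assumed — need no extra condition on $f$.
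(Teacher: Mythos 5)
Your proof is correct and follows essentially the same route as the source the paper itself cites for this lemma (\cite{DiNunno09}): an $S$-transform computation of the candidate, a Potthoff--Streit-type characterization bound to place $F_y$ in $({\cal S})'$, and injectivity of the $S$-transform to verify the defining identity and uniqueness. The only point worth flagging is your tacit reduction of $Y$ to the first-chaos form $\bar\mu+\int_0^{T_0}\varphi_u\,\mathrm{d}W_u$, which is what the Wick-exponential formula implicitly requires and is satisfied by the $Y_0$ actually used in the paper.
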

where $({\cal S})'$ is the Hida distribution space, and $\diamond$ denotes the Wick product. We refer to \cite{Huang00} for relevant definitions.

Moreover, we can obtain the explicit expression of $\phi$ as the following lemma (see \cite{Draouil16}).

\begin{lemma}[Enlargement of filtration] \label{lemmaofdonskersemi}
Suppose $Y$ is an ${\cal F}_{T_0}$-measurable random variable for some $T_0> T$ and belongs to $({\cal S})'$. The Donsker $\delta$ functional of $Y$ exists and satisfies $\mathbb{E}[ \delta_\cdot(Y) |{\cal F}_t]\in L^2(\mathrm{m}\times\mathbb{P})$ and $\mathbb{E}[ D_t\delta_\cdot(Y) |{\cal F}_t]\in L^2(\mathrm{m}\times\mathbb{P})$, where $D_t$ is the (extended) Hida-Malliavin derivative (see \cite{DiNunno09}). Assume further that ${\cal H}_t=\bigcap_{s>t} ({\cal F}_s\vee Y)$, which satisfies the usual condition, and $W$ is an ${\cal H}_t$-semimartingale with the decomposition (\ref{decompofW}). Then we have
\begin{equation*}
\phi_t=\frac{\mathbb{E}[D_t\delta_{y}(Y)|{\cal F}_t]|_{y=Y}}{\mathbb{E}[\delta_y(Y)|{\cal F}_t]|_{y=Y}}.
\end{equation*}
\end{lemma}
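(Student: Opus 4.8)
\quad The plan is to read off $\phi$ from the fibrewise (generalized) Clark--Ocone representation of the conditional law of $Y$ combined with the Bayes rule for an initial enlargement, identifying the drift through uniqueness of the canonical decomposition already invoked in Theorem \ref{mainth3}. First I would introduce the ${\cal F}_t$-conditional density $p_t(y):=\mathbb{E}[\delta_y(Y)\,|\,{\cal F}_t]$ of $Y$ with respect to Lebesgue measure. For each fixed $y$ the generalized Clark--Ocone formula in the Hida distribution space (see \cite{DiNunno09}) gives $\delta_y(Y)=\mathbb{E}[\delta_y(Y)]+\int_0^{T_0}\mathbb{E}[D_u\delta_y(Y)|{\cal F}_u]\,\mathrm dW_u$; conditioning on ${\cal F}_t$ and using the two integrability hypotheses, which turn the representing integrand into a genuine element of $L^2(\mathrm{m}\times\mathbb{P})$, yields
\begin{equation*}
p_t(y)=\mathbb{E}[\delta_y(Y)]+\int_0^t\mathbb{E}[D_u\delta_y(Y)\,|\,{\cal F}_u]\,\mathrm dW_u,\qquad 0\le t\le T_0 .
\end{equation*}
Hence $p_\cdot(y)$ is a continuous ${\cal F}_t$-martingale, strictly positive along $y=Y$, with $\mathrm d\langle W,p_\cdot(y)\rangle_u=\mathbb{E}[D_u\delta_y(Y)\,|\,{\cal F}_u]\,\mathrm du$.

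Next, since by hypothesis (Theorem \ref{mainth3}) $W_t=W_{\cal H}(t)+\int_0^t\phi_s\,\mathrm ds$ with $W_{\cal H}$ an ${\cal H}_t$-Brownian motion, the process $\int_0^\cdot\phi_s\,\mathrm ds$ is the unique predictable finite-variation part of $W$ in $\{{\cal H}_t\}$, so $\phi$ is pinned down by $\mathbb{E}\big[\int_s^t\phi_u\,\mathrm du\,\big|\,{\cal H}_s\big]=\mathbb{E}[W_t-W_s\,|\,{\cal H}_s]$ for $0\le s<t\le T$. Because ${\cal H}_s=\bigcap_{r>s}({\cal F}_r\vee\sigma(Y))$ and the ${\cal F}_s$-conditional law of $Y$ has density $p_s(\cdot)$, Jacod's absolute-continuity hypothesis holds and the initial-enlargement Bayes formula gives $\mathbb{E}[W_t-W_s\,|\,{\cal H}_s]=p_s(y)^{-1}\mathbb{E}[(W_t-W_s)\delta_y(Y)\,|\,{\cal F}_s]\big|_{y=Y}$. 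I would then condition the inner expectation on ${\cal F}_t$ (replacing $\delta_y(Y)$ by $p_t(y)$), insert the representation of $p_t(y)$, and apply the It\^o isometry on $[s,t]$; the $p_s(y)$-term drops out and one is left with $\mathbb{E}[(W_t-W_s)\delta_y(Y)|{\cal F}_s]=\mathbb{E}\big[\int_s^t\mathbb{E}[D_u\delta_y(Y)|{\cal F}_u]\,\mathrm du\,\big|\,{\cal F}_s\big]$. Dividing by $t-s$, letting $t\downarrow s$ (Lebesgue differentiation in the time variable), and substituting $y=Y$, the left side tends to $\phi_s$ and the right side to $\mathbb{E}[D_s\delta_y(Y)|{\cal F}_s]/p_s(y)\big|_{y=Y}$, which is the claimed identity for a.e.\ $s$; continuity in $t$ and right-continuity of $\{{\cal H}_t\}$ then give it for every $t\in[0,T]$.

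The hard part will be justifying the manipulations with the distribution-valued quantities $\delta_y(Y)$ and $D_u\delta_y(Y)$: one must produce jointly measurable versions of $(y,u,\omega)\mapsto\mathbb{E}[\delta_y(Y)|{\cal F}_u]$ and $(y,u,\omega)\mapsto\mathbb{E}[D_u\delta_y(Y)|{\cal F}_u]$ lying in the appropriate $L^2$ spaces, so that Fubini, the stochastic Fubini theorem behind the It\^o-isometry step, and---crucially---the substitution $y\mapsto Y$ all commute with conditional expectation and with the $t\downarrow s$ limit; the two integrability hypotheses of the lemma are exactly what licenses this within the Hida--Malliavin framework of \cite{DiNunno09,Draouil16}. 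A more economical route, which I would note as a remark, is to invoke Jacod's enlargement theorem directly for the information drift $\mathrm d\langle W,p_\cdot(Y)\rangle_t/p_t(Y)$ and then use the Clark--Ocone identity $\mathrm d\langle W,p_\cdot(y)\rangle_t=\mathbb{E}[D_t\delta_y(Y)|{\cal F}_t]\,\mathrm dt$ to rewrite it as the stated quotient; the computation above merely reproves that enlargement formula in the present setting.
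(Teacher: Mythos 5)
Your sketch is correct and follows essentially the same route as the paper's proof, which is simply deferred to \cite{Draouil16}: there the information drift is likewise obtained by writing $\mathbb{E}[W_{t+h}-W_t\,|\,{\cal H}_t]$ via the Bayes/Jacod formula with the Donsker $\delta$ functional, representing $\mathbb{E}[\delta_y(Y)|{\cal F}_t]$ through the generalized Clark--Ocone theorem, applying the It\^{o} isometry, and dividing by $h$ before letting $h\downarrow 0$. Your closing remarks on joint measurability and the substitution $y\mapsto Y$ correctly identify where the lemma's two $L^2(\mathrm{m}\times\mathbb{P})$ hypotheses are used, so nothing essential is missing.
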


By Lemma \ref{lemmaofdonsker} and the L\'{e}vy theorem, the Donsker $\delta$ functional of $Y_0$ in (\ref{donsker_filtration}) is given by
\begin{equation}
\delta_y(Y_0)=\frac{1}{\sqrt{2\pi\Vert \varphi\Vert_{[0,T_0]}^2}}\exp^{\diamond}\Bigg\{-\frac{(y-Y_0)^{\diamond 2}}{2\Vert\varphi \Vert_{[0,T_0]}^2}  \Bigg\},
\end{equation}
and we have
\begin{equation}\label{defofg}
\mathbb{E}[\delta_y(Y_0)|{\cal F}_t]=\frac{1}{\sqrt{2\pi\Vert \varphi\Vert_{[t,T_0]}^2}}\exp\Bigg\{-\frac{(y-\int_0^t\varphi_s\mathrm{d}W_s)^{ 2}}{2\Vert\varphi \Vert_{[t,T_0]}^2}  \Bigg\},\quad 0\le t\le T.
\end{equation}
 Moreover, when the filtration $\{{\cal H}_t\}_{0\le t\le T}$ is of the form (\ref{donsker_filtration}), we have by Lemma \ref{lemmaofdonskersemi} that
\begin{equation}\label{weknowphi1}
\phi_t=\phi_t(Y_0)=\frac{Y_{0}-\int_0^t\varphi_s\mathrm{d}W_s}{\Vert \varphi\Vert^2_{[t,T_0]}}\varphi_t.
\end{equation}
Substituting (\ref{weknowphi1}) into (\ref{definitionofPi}) and using the It\^{o} formula we can rewrite the expression of $\Pi^*(t_1,t_2)$, $0\le t_1\le t_2\le T$, as follows
    \begin{equation} \label{redefinitionofPi}
  \begin{aligned}
\Pi^*(t_1,t_2)&=\Pi^*(t_1,t_2,y)|_{y=Y_0}\\
&= \exp\left\{-\int_{t_1}^{t_2}\phi_s(Y_0)\mathrm{d}W_s+\frac{1}{2}\int_{t_1}^{t_2}\phi_s(Y_0)\mathrm{d}y\right\}\Pi^*_a(t_1,t_2)\\
&=\frac{\mathbb{E}[\delta_y(Y_0)|{\cal F}_{t_1}]}{\mathbb{E}[\delta_y(Y_0)|{\cal F}_{t_2}]}   \Pi^*_a(t_1,t_2) \big|_{y=Y_0},
  \end{aligned}
 \end{equation}
where
    \begin{equation}  \label{pi_adef}
  \begin{aligned}
\Pi^*_a(t_1,t_2):= \exp\Bigg\{  -\int_{t_1}^{t_2} r_s  \mathrm{d}s
  -\int_{t_1}^{t_2} \iota_s  \mathrm{d}W_s
   -\frac{1}{2}\int_{t_1}^{t_2}    \iota_s ^2 \mathrm{d}s \Bigg\}
  \end{aligned}
 \end{equation}
 is an ${\cal F}_t$-adapted semimartingale.
 
The terminal value condition of $X^{\pi^*}_T$ in (\ref{final_gex_1_i}) leads to
\begin{equation}\label{c3determined_donsker_iii}
\begin{aligned}
X^{\pi^*}_T=X^{\pi^*}_T(y)|_{y=Y_0}&=\frac{X_0}{\mathbb{E}\big[\sqrt{\Pi^*(0,T)} | {\cal H}_0\big](y)}\sqrt{\frac{ \mathbb{E}[\delta_y(Y_0)|{\cal F}_T]}{   \mathbb{E}[\delta_y(Y_0)|{\cal F}_0]   \Pi^*_a(0,T)}} \Bigg|_{y=Y_0}\\
&=\tilde c^*_3(y)  \sqrt{\frac{ \mathbb{E}[\delta_y(Y_0)|{\cal F}_T]}{    \Pi^*_a(0,T)}}\Bigg|_{y=Y_0},
\end{aligned}\end{equation}
where $\tilde c^*_3(y):=\frac{X_0}{\mathbb{E}\big[\sqrt{\Pi^*(0,T)} | {\cal H}_0\big](y) \sqrt{\mathbb{E}[\delta_y(Y_0)|{\cal F}_0]}  }$ is a Borel measurable function with respect to $y$.

By Definition \ref{definitionofdonsker}, the BSDE (\ref{final_gex_1_i}) can be rewritten as
  \begin{equation}\label{final_gex_1_donsker}\begin{aligned}
\int_{\mathbb{R}} X^{\pi^*}_t(y)\delta_y(Y_0)\mathrm{d}y=& \int_{\mathbb{R}}  X_T^{\pi^*}(y)\delta_y(Y_0)\mathrm{d}y+ \int_{\mathbb{R}} \int_t^T\left[-r_sX_s^{\pi^*}(y) -\iota_s z^*_s(y) \right] \mathrm{d}s \delta_y(Y_0) \mathrm{d}y\\
&-\int_{\mathbb{R}}\int_t^Tz^*_s(y)\mathrm{d}W_s\delta_y(Y_0) \mathrm{d}y, \quad 0\le t\le T.
\end{aligned} \end{equation}
 Then $(\ref{final_gex_1_i})$ holds if and only if we choose $(X^{\pi^*}_t(y),z^*_t(y))$ for each $y$ as the solution of the following classical linear BSDE with respect to the filtration $\{{\cal F}_t\}_{0\le t\le T}$
  \begin{eqnarray}\label{final_gex_1_donsker2}
  \left\{ \begin{aligned}&\mathrm{d}X^{\pi^*}_t(y)=-\bar f_{\text{L}}(t,X^{\pi^*}_t(y),z^*_t(y))\mathrm{d}t+z^*_t(y)\mathrm{d}W_t, \quad 0\le t\le T,
  \\& X^{\pi^*}_T(y)=\tilde c^*_3(y)   \sqrt{\frac{\mathbb{E}[\delta_y(Y_0)|{\cal F}_T]}{   \Pi^*_a(0,T)}} ,\end{aligned} \right.   
 \end{eqnarray}
 where the generator $\bar f_{\text{L}}:[0,T]\times \mathbb{R}\times \mathbb{R}^2 \rightarrow \mathbb{R}$  is given by
  \begin{equation*} 
\begin{aligned}
\bar f_{\text{L}}(t,x,z )=-r_tx-\iota_tz.
\end{aligned} 
 \end{equation*}
 
By \cite[Theorem 4.8]{Oksendal19}, the unique strong solution of (\ref{final_gex_1_donsker2}) is given by
  \begin{equation}\label{linearbsde_bdg_2_donsker}
 \begin{aligned}
 X^{\pi^*}_t(y)= \mathbb{E} \left[   \Pi_a^*(t,T)\tilde c^*_3(y)   \sqrt{\frac{ \mathbb{E}[\delta_y(Y_0)|{\cal F}_T]}{    \Pi^*_a(0,T)}} \bigg | {\cal F}_t\right],
\end{aligned}
\end{equation}
 By the initial value condition $X^{\pi^*}_0(y)=X_0$, the Borel measurable function $\tilde c^*_3(y)$ in (\ref{linearbsde_bdg_2_donsker}) is given by
 \begin{equation} \label{c3determined_donsker}
 \begin{aligned}
 \tilde c^*_3(y) =\frac{X_0}{\mathbb{E}     \sqrt{  \Pi_a^*(0,T) \mathbb{E}[\delta_y(Y_0)|{\cal F}_T]} }=\frac{X_0}{\mathbb{E}\big[\sqrt{\Pi^*(0,T)} | {\cal H}_0\big](y) \sqrt{\mathbb{E}[\delta_y(Y_0)|{\cal F}_0]}  }.
 \end{aligned}
\end{equation}
 The last equation in (\ref{c3determined_donsker}) is by the definition of $\tilde c^*_3(y)$. Substituting (\ref{c3determined_donsker}) into (\ref{linearbsde_bdg_2_donsker}) we obtain
   \begin{equation}\label{linearbsde_bdg_2_donsker_2}
 \begin{aligned}
 X^{\pi^*}_t(y)= \frac{X_0\mathbb{E}\left[ \sqrt{\Pi^*_a(t,T)\mathbb{E}[\delta_y(Y_0)|{\cal F}_T]  } |{\cal F}_t   \right]}{  \mathbb{E}\sqrt{\Pi^*_a(0,T)\mathbb{E}[\delta_y(Y_0)|{\cal F}_T]}\sqrt{\Pi^*_a(0,t)}}.
\end{aligned}
\end{equation}
By \cite[Proposition 3.5.1]{Delong13}, we have
   \begin{equation} \label{iknow1}
 \begin{aligned}
z_t^*(y)&=D_tX^{\pi^*}_t(y)\\
&=\frac{X_0\mathbb{E}\left[ D_t\sqrt{\Pi^*_a(t,T)\mathbb{E}[\delta_y(Y_0)|{\cal F}_T]  } |{\cal F}_t   \right]}{  \mathbb{E}\sqrt{\Pi^*_a(0,T)\mathbb{E}[\delta_y(Y_0)|{\cal F}_T]}\sqrt{\Pi^*_a(0,t)}}-\frac{1}{2} \frac{X_0\mathbb{E}\left[ \sqrt{\Pi^*_a(t,T)\mathbb{E}[\delta_y(Y_0)|{\cal F}_T]  } |{\cal F}_t   \right]}{  \mathbb{E}\sqrt{\Pi^*_a(0,T)\mathbb{E}[\delta_y(Y_0)|{\cal F}_T]}\Pi^*_a(0,t)^{\frac{3}{2}}}D_t\Pi^*_a(0,t)\\
&=\frac{1}{2}\frac{X_0\mathbb{E}\left[ \sqrt{ \Pi^*_a(t,T)\mathbb{E}[\delta_y(Y_0)|{\cal F}_T]   } \left(y-\int_0^T\varphi_s\mathrm{d}W_s\right)   \big|{\cal F}_t   \right]}{ \Vert \varphi\Vert^2_{[T,T_0]} \mathbb{E}\sqrt{\Pi^*_a(0,T)\mathbb{E}[\delta_y(Y_0)|{\cal F}_T]}\sqrt{\Pi^*_a(0,t)}}\varphi_t\\
&\quad +\frac{1}{2} \frac{X_0\mathbb{E}\left[ \sqrt{\Pi^*_a(t,T)\mathbb{E}[\delta_y(Y_0)|{\cal F}_T]  } |{\cal F}_t   \right]}{  \mathbb{E}\sqrt{\Pi^*_a(0,T)\mathbb{E}[\delta_y(Y_0)|{\cal F}_T]}\sqrt{\Pi^*_a(0,t)}}\iota_t.
\end{aligned}
\end{equation}

Substituting (\ref{iknow1})  into (\ref{bsde_barzt1}) we obtain the robust optimal portfolio strategy
 \begin{equation}\label{bsde_barzt1_donsk}
   \begin{aligned}
 \pi_t^*&=\pi_t^*(y)|_{y=Y_0}\\
 & =\frac{\iota_t}{2\sigma_t}+\frac{1}{2}\frac{\mathbb{E}\left[\tilde \Pi^*_a(t,T) \sqrt{  \mathbb{E}[\delta_y(Y_0)|{\cal F}_T]   } \left(y-\int_0^T\varphi_s\mathrm{d}W_s\right)   \big|{\cal F}_t   \right]}{\sigma_t \Vert \varphi\Vert^2_{[T,T_0]}\mathbb{E}\left[\tilde \Pi^*_a(t,T)\sqrt{ \mathbb{E}[\delta_y(Y_0)|{\cal F}_T]} |{\cal F}_t  \right]}\varphi_t\Bigg|_{y=Y_0}  ,
 \end{aligned} 
 \end{equation}
 where
  \begin{equation}  
  \begin{aligned}
\tilde\Pi^*_a(0,t):= \exp\Bigg\{  -\int_{0}^{t} \frac{\iota_s}{2}  \mathrm{d}W_s
-\frac{1}{8}\int_{0}^{t}    \iota_s ^2 \mathrm{d}s \Bigg\}.
  \end{aligned}
 \end{equation}
Then by the Girsanov theorem,  $W_{\mathbb Q}(t):=W_t+\int_0^t\frac{\iota_s}{2}\mathrm{d}s$ is an ${\cal F}_t$-Brownian motions under the new equivalent probability measure $\mathbb Q$ defined by $\mathrm{d}{\mathbb Q}=\tilde \Pi^*_a(0,T)\mathrm{d}\mathbb{P}$. Thus, by the Bayes rule (see \cite{Karatzas91}), we can rewritten the robust optimal portfolio strategy as follows
 \begin{equation} \label{markov_pi}
   \begin{aligned}
 \pi_t^* =&\frac{\iota_t}{2\sigma_t}+\frac{1}{2}\frac{\mathbb{E}_{\mathbb Q}\left[  \sqrt{  \mathbb{E}[\delta_y(Y_0)|{\cal F}_T]   } \left(y-\int_0^T\varphi_s\mathrm{d}W_s\right)   \big|{\cal F}_t   \right]}{\sigma_t \Vert \varphi\Vert^2_{[T,T_0]}\mathbb{E}_{\mathbb Q}\left[ \sqrt{ \mathbb{E}[\delta_y(Y_0)|{\cal F}_T]} |{\cal F}_t  \right]}\varphi_t  \Bigg|_{y=Y_0}\\
  =&\frac{\iota_t}{2\sigma_t}+\frac{1}{2}\frac{\varphi_t}{\sigma_t \Vert \varphi\Vert^2_{[T,T_0]}} \Bigg\{y-
\frac{\mathbb{E}_{\mathbb Q}\Big[ \exp\Big\{-\frac{(y-\int_0^T\varphi_s\mathrm{d}W_{\mathbb Q}(s)+\tilde\iota_T)^{ 2}}{4\Vert\varphi \Vert_{[T,T_0]}^2}  \Big\}\Big( \int_0^T\varphi_s\mathrm{d}W_{\mathbb Q}(s)  -\tilde\iota_T\Big)  \big|{\cal F}_t   \Big]}{\mathbb{E}_{\mathbb Q}\Big[ \exp\Big\{-\frac{(y-\int_0^T\varphi_s\mathrm{d}W_{\mathbb Q}(s)+\tilde\iota_T)^{ 2}}{4\Vert\varphi \Vert_{[T,T_0]}^2}  \Big\} \big|{\cal F}_t  \Big]}\Bigg\}   \Bigg|_{y=Y_0},
\end{aligned} 
 \end{equation}
where $\tilde\iota_t:=\frac{1}{2}\int_0^t\varphi_s\iota_s\mathrm{d}s$, $t\in[0,T]$.
On the other hand, the conditional ${\mathbb Q}$ law of $\int_0^T\varphi_s\mathrm{d}W_{\mathbb Q}(s)$, given ${\cal F}_t$, is normal with mean $\int_0^t\varphi_s\mathrm{d}W_{\mathbb Q}(s)$ and variance $\Vert\varphi\Vert^2_{[t,T]}$ due to the Markov property of It\^{o} diffusion processes (see \cite{Karatzas91}). Thus (\ref{markov_pi}) leads to 
 \begin{equation}\label{bsde_barzt1_donsk01}  
   \begin{aligned}
 \pi_t^*  
  =&\frac{\iota_t}{2\sigma_t}+\frac{1}{2}\frac{\varphi_t}{\sigma_t \Vert \varphi\Vert^2_{[T,T_0]}} \Bigg\{y+\tilde \iota_T-
\frac{\int_{\mathbb{R}}   \frac{x}{\sqrt{2\pi\Vert\varphi \Vert_{[t,T]}^2}}   \exp\Big\{-\frac{(y -x+\tilde\iota_T)^{ 2}}{4\Vert\varphi \Vert_{[T,T_0]}^2} -\frac{(x-z)^2}{2\Vert\varphi \Vert_{[t,T]}^2} \Big\}    \mathrm{d}x}
{ \int_{\mathbb{R}}  \frac{1}{\sqrt{2\pi\Vert\varphi \Vert_{[t,T]}^2}}    \exp\Big\{-\frac{(y -x+\tilde\iota_T)^{ 2}}{4\Vert\varphi \Vert_{[T,T_0]}^2} -\frac{(x-z)^2}{2\Vert\varphi \Vert_{[t,T]}^2} \Big\}    \mathrm{d}x    }\Bigg\}    \Bigg|_{\substack{z=\int_0^t\varphi_s\mathrm{d}W_{\mathbb Q}(s) \\ y=Y_0}}\\
=&\frac{\iota_t}{2\sigma_t}+\frac{1}{2}\frac{\varphi_t}{\sigma_t \Vert \varphi\Vert^2_{[T,T_0]}} \Bigg\{y+\tilde \iota_T-
 \frac{\Vert\varphi \Vert_{[t,T]}^2(y+\tilde\iota_T)+2\Vert\varphi \Vert_{[T,T_0]}^2 z}{\Vert\varphi \Vert_{[t,T]}^2+2\Vert\varphi \Vert_{[T,T_0]}^2}
 \Bigg\}   \Bigg|_{\substack{z=\int_0^t\varphi_s\mathrm{d}W_{\mathbb Q}(s) \\ y=Y_0}}\\
=&\frac{\iota_t}{2\sigma_t}+\frac{ Y_0-\int_0^t\varphi_s\mathrm{d}W_s+\frac{1}{2}\int_t^T\varphi_s\iota_s\mathrm{d}s }{\sigma_t\left(\Vert\varphi \Vert_{[t,T_0]}^2+\Vert\varphi \Vert_{[T,T_0]}^2\right)}\varphi_t   .
\end{aligned} 
 \end{equation}

By the equation (\ref{halfequ_21}) in Theorem \ref{mainth6}, we can also obtain 
 \begin{equation}\label{bsde_barzt1_donsktheta}
   \begin{aligned}
 &\theta^*_t=-\frac{1}{2} \iota_t+\frac{ Y_0-\int_0^t\varphi_s\mathrm{d}W_s+\frac{1}{2}\int_t^T\varphi_s\iota_s\mathrm{d}s }{\left(\Vert\varphi \Vert_{[t,T_0]}^2+\Vert\varphi \Vert_{[T,T_0]}^2\right)}\varphi_t-\frac{Y_0-\int_0^t\varphi_s\mathrm{d}W_s}{\Vert \varphi\Vert^2_{[t,T_0]}}\varphi_t.
\end{aligned} 
 \end{equation}

To sum up, we give the following theorem for this particular case.

    \begin{theorem} \label{ex_mainth_idontknow_donsker}
Assume that $\mu(t,x)=\mu_0(t) $ for some bounded function $\mu_0(t)$, and $g(\theta)=\frac{1}{2}  \theta^2 $. Assume further that $\{{\cal H}_t\}_{0\le t\le T}$ is given by (\ref{donsker_filtration}) and all parameter processes are deterministic bounded functions. Suppose $(\pi^*,\theta^*)\in {\cal A}_1'\times{\cal A}_2'$ is optimal for Problem \ref{sdg} under the conditions of Theorem \ref{mainth4}. Then $(\pi^*,\theta^*)$ is given by (\ref{bsde_barzt1_donsk01}) and (\ref{bsde_barzt1_donsktheta}).
 \end{theorem}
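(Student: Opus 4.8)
The plan is to obtain this statement as the specialization of Theorem~\ref{ex_mainth_idontknow} to the initial-enlargement filtration (\ref{donsker_filtration}), making every conditional expectation explicit by means of the Donsker $\delta$ functional. Since $Y_0=\int_0^{T_0}\varphi_u\mathrm{d}W_u$ is centered Gaussian with variance $\Vert\varphi\Vert^2_{[0,T_0]}>0$, Lemma~\ref{lemmaofdonsker} produces its Donsker $\delta$ functional, and the L\'evy theorem gives the conditional form $\mathbb{E}[\delta_y(Y_0)\,|\,\mathcal{F}_t]$ in (\ref{defofg}). Lemma~\ref{lemmaofdonskersemi} then yields the explicit drift $\phi_t$ in (\ref{weknowphi1}); substituting this into the definition (\ref{definitionofPi}) of $\Pi^*$ and applying the It\^o formula gives the factorization (\ref{redefinitionofPi}) of $\Pi^*(t_1,t_2)$ into the $\mathcal{F}_t$-semimartingale $\Pi^*_a$ and a ratio of conditional Donsker functionals. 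Note that $\mu(t,x)=\mu_0(t)$ means $\varrho_t\equiv0$, so $\tilde\sigma_t=\sigma_t$ and the degenerate BSDE (\ref{final_gex_1_i}) of Theorem~\ref{ex_mainth_idontknow} applies.

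Next I would use the defining property of the Donsker $\delta$ functional (Definition~\ref{definitionofdonsker}) to write each $\mathcal{H}_t$-adapted quantity as $x_t=x_t(Y_0)$ with $x_t(y)$ being $\mathcal{F}_t$-adapted, turning the anticipating linear BSDE (\ref{final_gex_1_i}) into the integrated identity (\ref{final_gex_1_donsker}). This holds if and only if, for each fixed $y$, the pair $(X^{\pi^*}_t(y),z^*_t(y))$ solves the classical $\mathcal{F}_t$-BSDE (\ref{final_gex_1_donsker2}) with bounded deterministic coefficients; by \cite[Theorem~4.8]{Oksendal19} this has a unique strong solution, given in closed form by (\ref{linearbsde_bdg_2_donsker}). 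The initial condition $X^{\pi^*}_0(y)=X_0$ pins down the Borel function $\tilde c^*_3(y)$ as in (\ref{c3determined_donsker}), leading to (\ref{linearbsde_bdg_2_donsker_2}). By \cite[Proposition~3.5.1]{Delong13} one has $z^*_t(y)=D_tX^{\pi^*}_t(y)$, and carrying out the Malliavin differentiation, using $D_t\Pi^*_a(0,t)=-\iota_t\Pi^*_a(0,t)$ together with the chain rule applied to $\mathbb{E}[\delta_y(Y_0)\,|\,\mathcal{F}_T]$ (which brings down the factor $\varphi_t$ and the term $y-\int_0^T\varphi_s\mathrm{d}W_s$), yields (\ref{iknow1}). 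Substituting into (\ref{bsde_barzt1}), i.e. $\pi^*_t=z^*_t/(\sigma_tX^{\pi^*}_t)$, gives the first form of $\pi^*_t$ in (\ref{bsde_barzt1_donsk}).

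To pass to the closed form I would perform a Girsanov change of measure: under $\mathrm{d}\mathbb{Q}=\tilde\Pi^*_a(0,T)\mathrm{d}\mathbb{P}$ the process $W_{\mathbb{Q}}(t)=W_t+\tfrac12\int_0^t\iota_s\mathrm{d}s$ is an $\mathcal{F}_t$-Brownian motion, so by the Bayes rule the ratio in (\ref{bsde_barzt1_donsk}) becomes the ratio of $\mathbb{Q}$-conditional expectations of explicit Gaussian exponentials in (\ref{markov_pi}). Using that, conditionally on $\mathcal{F}_t$, $\int_0^T\varphi_s\mathrm{d}W_{\mathbb{Q}}(s)$ is $\mathcal{N}\!\big(\int_0^t\varphi_s\mathrm{d}W_{\mathbb{Q}}(s),\Vert\varphi\Vert^2_{[t,T]}\big)$ by the Markov property, both numerator and denominator reduce to one-dimensional Gaussian integrals; evaluating these and simplifying the resulting rational expression collapses everything to (\ref{bsde_barzt1_donsk01}). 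Finally, equation (\ref{halfequ_21}) of Theorem~\ref{mainth6} gives $\theta^*_t=\sigma_t\pi^*_t-\tilde\phi_t$ with $\tilde\phi_t=\iota_t+\phi_t$, and inserting (\ref{bsde_barzt1_donsk01}) together with the explicit $\phi_t$ from (\ref{weknowphi1}) yields (\ref{bsde_barzt1_donsktheta}).

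I expect the main obstacle to be the explicit evaluation and algebraic simplification of the Gaussian integrals in the last step: one must keep careful track of the two variances $\Vert\varphi\Vert^2_{[t,T]}$ and $\Vert\varphi\Vert^2_{[T,T_0]}$ and of the shift $\tilde\iota_T$ while reducing the ratio of Gaussian first moments to an affine expression in $Y_0$. A secondary technical point is the verification, under the standing hypotheses that all coefficients are bounded deterministic functions and $\Vert\varphi\Vert^2_{[T,T_0]}>0$, of the $L^2$-integrability of the terminal data and of condition (\ref{linearbsde_bdg}), so that \cite[Theorem~4.8]{Oksendal19} and \cite[Proposition~3.5.1]{Delong13} are applicable and Theorem~\ref{ex_mainth_idontknow} is in force.
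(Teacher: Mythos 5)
Your proposal is correct and follows essentially the same route as the paper's own derivation in Section~\ref{Sub_particular1}: explicit $\phi_t$ via Lemmas~\ref{lemmaofdonsker} and~\ref{lemmaofdonskersemi}, the factorization (\ref{redefinitionofPi}) of $\Pi^*$, reduction of the anticipating BSDE (\ref{final_gex_1_i}) to the parametrized classical BSDE (\ref{final_gex_1_donsker2}) via the Donsker $\delta$ functional, the Malliavin-derivative representation of $z^*_t(y)$, and the Girsanov--Bayes--Gaussian-conditioning computation leading to (\ref{bsde_barzt1_donsk01}), with $\theta^*$ recovered from (\ref{halfequ_21}). The only additions are your explicit remarks on checking the integrability hypotheses, which the paper leaves implicit.
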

 
 Moreover, if $\varphi=1$, $\pi^*$ is given by
  \begin{equation}\label{cdonsu}
   \begin{aligned}
  \pi^*_t&=\frac{\mu_0(t)-r_t}{2\sigma^2_t}+\frac{ W_{T_0}-W_t+\frac{1}{2}\int_t^T\frac{\mu_0(s)-r_s}{\sigma_s}\mathrm{d}s  }{\sigma_t \left( T_0-t+T_0-T\right)}    ,
 \end{aligned} 
 \end{equation}
 and $\theta^*$ is given by
   \begin{equation}\label{cdonsv}
   \begin{aligned}
&\theta_t^*=-\frac{\mu_0(t)-r_t}{2\sigma_t}+ \frac{ W_{T_0}-W_t+\frac{1}{2}\int_t^T\frac{\mu_0(s)-r_s}{\sigma_s}\mathrm{d}s  }{ \left( T_0-t+T_0-T\right)}-\frac{W_{T_0}-W_t}{ T_0-t} . 
 \end{aligned} 
 \end{equation}

By (\ref{c3determined_donsker}) we have 
 \begin{equation}  \label{compareX}
  \begin{aligned}
\mathbb{E}(\sqrt{\Pi^*(0,T)}|{\cal H}_0)= \left(\mathbb{E}\sqrt{\Pi^*_a(0,T)\frac{\mathbb{E}[\delta_y(Y_0)|{\cal F}_T]}{\mathbb{E}[\delta_y(Y_0)|{\cal F}_0]}}\right) \Bigg{|}_{y=Y_0}\end{aligned}
 \end{equation} 
Substituting (\ref{compareX}) into (\ref{dontknowv}), we have by Girsanov theorem and tedious calculation that
 \begin{equation}  \label{value2}
  \begin{aligned}
  V=&\ln X_0-2\mathbb{E}\left[\left( \ln  \mathbb{E}\sqrt{\Pi^*_a(0,T)\frac{\mathbb{E}[\delta_y(Y_0)|{\cal F}_T]}{\mathbb{E}[\delta_y(Y_0)|{\cal F}_0]}}\right)\Bigg{|}_{y=Y_0}  \right]
  \\=& \ln X_0+ \int_0^T r_t \mathrm{d}t  +\frac{1}{4}\int_0^T\left( \frac{\mu_0(t)-r_t}{\sigma_t}  \right)^2\mathrm{d}t  -2\mathbb{E}\left[  \left(\ln \mathbb{E}_{\mathbb Q}\sqrt{\frac{\mathbb{E}[\delta_y(Y_0)|{\cal F}_T]}{\mathbb{E}[\delta_y(Y_0)|{\cal F}_0]}   } \right)\Bigg{|}_{y=Y_0}    \right]\\
  =& \ln X_0+ \int_0^T r_t \mathrm{d}t  +\frac{1}{4}\int_0^T \left( \frac{\mu_0(t)-r_t}{\sigma_t}  \right)^2 \mathrm{d}t +\frac{1}{2}\ln \left( 1-\frac{T^2}{(2T_0-T)^2} \right)^{-1}+\frac{T}{2(2T_0-T)}\\
  &  +\frac{1}{4(2T_0-T)}\left( \int_0^T \frac{\mu_0(t)-r_t}{\sigma_t} \mathrm{d}t  \right)^2
   \end{aligned}
 \end{equation}

Thus we have the following corollary.
 
     \begin{corollary} \label{ex_mainth_idontknow_donskerc}
Assume that $\mu(t,x)=\mu_0(t) $ for some bounded function $\mu_0(t)$, and $g(\theta)=\frac{1}{2} \theta^2$. Assume further that $\{{\cal H}_t\}_{0\le t\le T}$ is given by (\ref{donsker_filtration}) with $\varphi=1$ and all parameter processes are bounded functions. Suppose $(\pi^*,\theta^*)\in {\cal A}_1'\times{\cal A}_2'$ is optimal for Problem \ref{sdg} under the conditions of Theorem \ref{mainth4}. Then $(\pi^*,\theta^*)$ is given by (\ref{cdonsu}) and (\ref{cdonsv}), and the value $V$ is given by (\ref{value2}).
 \end{corollary}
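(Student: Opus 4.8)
The plan is to derive the corollary as the $\varphi\equiv1$ specialization of Theorem~\ref{ex_mainth_idontknow_donsker}, combined with a direct evaluation of the value formula (\ref{dontknowv}). First I would observe that under the hypotheses of the corollary all conditions of Theorem~\ref{ex_mainth_idontknow_donsker} (and hence of Theorem~\ref{mainth4}) are in force, so that $\pi^*$ and $\theta^*$ are given by (\ref{bsde_barzt1_donsk01}) and (\ref{bsde_barzt1_donsktheta}). Taking $\varphi\equiv1$ gives $\Vert\varphi\Vert^2_{[s,t]}=t-s$ for all $0\le s\le t\le T_0$ and $Y_0=\int_0^{T_0}\mathrm{d}W_u=W_{T_0}$; substituting $\Vert\varphi\Vert^2_{[t,T_0]}=T_0-t$, $\Vert\varphi\Vert^2_{[T,T_0]}=T_0-T$, hence $\Vert\varphi\Vert^2_{[t,T_0]}+\Vert\varphi\Vert^2_{[T,T_0]}=2T_0-t-T$, together with $\iota_t=(\mu_0(t)-r_t)/\sigma_t$ and $\int_0^t\varphi_s\mathrm{d}W_s=W_t$, into (\ref{bsde_barzt1_donsk01}) and (\ref{bsde_barzt1_donsktheta}) yields (\ref{cdonsu}) and (\ref{cdonsv}) after routine algebraic simplification.

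For the value $V$, I would start from $V=\ln X_0-2\mathbb{E}\big[\ln\mathbb{E}(\sqrt{\Pi^*(0,T)}\,|\,{\cal H}_0)\big]$ in (\ref{dontknowv}) and replace $\mathbb{E}(\sqrt{\Pi^*(0,T)}\,|\,{\cal H}_0)$ by the right-hand side of (\ref{compareX}). The identity $\sqrt{\Pi^*_a(0,T)}=e^{-\frac12\int_0^T r_s\mathrm{d}s-\frac18\int_0^T\iota_s^2\mathrm{d}s}\,\tilde\Pi^*_a(0,T)$ together with the Girsanov change to $\mathbb{Q}$ with $\mathrm{d}\mathbb{Q}=\tilde\Pi^*_a(0,T)\mathrm{d}\mathbb{P}$ (under which $W_{\mathbb{Q}}(t)=W_t+\frac12\int_0^t\iota_s\mathrm{d}s$ is a Brownian motion) give
\[
\mathbb{E}\sqrt{\Pi^*_a(0,T)\,\tfrac{\mathbb{E}[\delta_y(Y_0)|{\cal F}_T]}{\mathbb{E}[\delta_y(Y_0)|{\cal F}_0]}}=e^{-\frac12\int_0^T r_s\mathrm{d}s-\frac18\int_0^T\iota_s^2\mathrm{d}s}\,\mathbb{E}_{\mathbb{Q}}\sqrt{\tfrac{\mathbb{E}[\delta_y(Y_0)|{\cal F}_T]}{\mathbb{E}[\delta_y(Y_0)|{\cal F}_0]}},
\]
so that $V=\ln X_0+\int_0^T r_s\mathrm{d}s+\frac14\int_0^T\iota_s^2\mathrm{d}s-2\mathbb{E}\big[(\ln\mathbb{E}_{\mathbb{Q}}\sqrt{R(y)})\big|_{y=Y_0}\big]$ with $R(y):=\mathbb{E}[\delta_y(Y_0)|{\cal F}_T]/\mathbb{E}[\delta_y(Y_0)|{\cal F}_0]$; this already accounts for the first three terms of (\ref{value2}).

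It then remains to evaluate $\mathbb{E}\big[(\ln\mathbb{E}_{\mathbb{Q}}\sqrt{R(y)})|_{y=Y_0}\big]$. For $\varphi\equiv1$, (\ref{defofg}) gives $\mathbb{E}[\delta_y(Y_0)|{\cal F}_t]=(2\pi(T_0-t))^{-1/2}\exp\{-(y-W_t)^2/(2(T_0-t))\}$, so $R(y)$ is an explicit Gaussian density ratio in $y$ and $W_T$. Writing $W_T=W_{\mathbb{Q}}(T)-\tilde\iota_T$ with $\tilde\iota_T=\frac12\int_0^T\iota_s\mathrm{d}s$ and using that $W_{\mathbb{Q}}(T)\sim N(0,T)$ under $\mathbb{Q}$, the inner expectation is a one–dimensional Gaussian integral; completing the square yields $\mathbb{E}_{\mathbb{Q}}\sqrt{R(y)}=(\tfrac{T_0}{T_0-T})^{1/4}(\tfrac{2(T_0-T)}{2T_0-T})^{1/2}\exp\{-\tfrac{(y+\tilde\iota_T)^2}{2(2T_0-T)}+\tfrac{y^2}{4T_0}\}$. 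Substituting $y=Y_0=W_{T_0}\sim N(0,T_0)$ and taking the outer $\mathbb{P}$-expectation (with $\mathbb{E}[W_{T_0}^2]=T_0$, $\mathbb{E}[(W_{T_0}+\tilde\iota_T)^2]=T_0+\tilde\iota_T^2$ and $\tilde\iota_T^2=\frac14(\int_0^T\iota_s\mathrm{d}s)^2$), then simplifying the logarithms via $1-T^2/(2T_0-T)^2=4T_0(T_0-T)/(2T_0-T)^2$ and $\frac{T_0}{2T_0-T}-\frac12=\frac{T}{2(2T_0-T)}$, produces exactly the three remaining terms $\frac12\ln(1-T^2/(2T_0-T)^2)^{-1}$, $\frac{T}{2(2T_0-T)}$ and $\frac{1}{4(2T_0-T)}(\int_0^T\frac{\mu_0(t)-r_t}{\sigma_t}\mathrm{d}t)^2$ of (\ref{value2}).

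The main obstacle is this final step: the two successive Gaussian integrations — first under the $\mathbb{Q}$–conditional law of $W_{\mathbb{Q}}(T)$, then over $Y_0=W_{T_0}$ — must be carried out carefully, keeping track of the deterministic shift $\tilde\iota_T$ and of the three distinct variances $T$, $T_0-T$ and $2(T_0-T)$ (the last being twice the Donsker-functional variance $\Vert\varphi\Vert^2_{[T,T_0]}$, the factor two arising from the square root in $\sqrt{R(y)}$), and then the accumulated logarithmic constants have to be reassembled into the single compact term $\frac12\ln(1-T^2/(2T_0-T)^2)^{-1}$. Everything else is a direct substitution into Theorem~\ref{ex_mainth_idontknow_donsker} and formulas (\ref{dontknowv})–(\ref{compareX}).
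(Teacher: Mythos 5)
Your proposal is correct and follows essentially the same route as the paper: specialize Theorem \ref{ex_mainth_idontknow_donsker} to $\varphi\equiv1$ (so $Y_0=W_{T_0}$ and $\Vert\varphi\Vert^2_{[s,t]}=t-s$) to obtain (\ref{cdonsu})--(\ref{cdonsv}), then compute $V$ from (\ref{dontknowv}) and (\ref{compareX}) via the Girsanov change to $\mathbb{Q}$ and two Gaussian integrations. The paper compresses this last step into ``by Girsanov theorem and tedious calculation,'' and your explicit evaluation — in particular $\mathbb{E}_{\mathbb{Q}}\sqrt{R(y)}=(\tfrac{T_0}{T_0-T})^{1/4}(\tfrac{2(T_0-T)}{2T_0-T})^{1/2}\exp\{-\tfrac{(y+\tilde\iota_T)^2}{2(2T_0-T)}+\tfrac{y^2}{4T_0}\}$ and the identity $1-T^2/(2T_0-T)^2=4T_0(T_0-T)/(2T_0-T)^2$ — checks out and reproduces (\ref{value2}) exactly.
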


\section{The large insider case: combined method}
\label{sec_example1} 
Now we consider the case when the insider is `large', which means that the mean rate of return $\mu$ could be influenced by her portfolio selection $\pi$.

  Just as in Section \ref{sec_exampleg_sub1}, we assume that the mean rate of return $\mu(t,x)=\mu_0(t)+\varrho_tx$ for some ${\cal F}_t$-adapted measurable processes $\mu_0(t)$ and $\varrho_t$ with $0\le \varrho_t<\frac{1}{2}\sigma_t^2$. Put $\iota_t=\frac{\mu_0(t)-r_t}{\sigma_t}$, $\tilde\sigma_t=\sigma_t-\frac{2\varrho_t}{\sigma_t}$, and $\tilde \phi_t=\iota_t+\phi_t$. Assume further the penalty function $g$ is given by $g(\theta)=\frac{1}{2}\theta^2$. Then we have
 \begin{equation}\label{utilityofe}
 \mathbb{E}\left[  \int_0^T\varepsilon_s^{\theta^*}  g(\theta^*_s)\mathrm{d}s \right]
= \mathbb{E}\left[\varepsilon^{\theta^*}_T  \ln \varepsilon^{\theta^*}_T  \right].
 \end{equation}
 
In this setting, the equation (\ref{halfequ_21}) in Theorem \ref{mainth6} can be reduced to
 \begin{equation}\label{reduceth6} 
 \begin{aligned}
 \theta_t^*=\tilde \sigma_t\pi^*_t-\tilde\phi_t. 
  \end{aligned}
 \end{equation}

By a similar procedure in Section \ref{sec_exampleg_sub1} with respect to the Hamiltonian system (\ref{half_equ3}), we have (see (\ref{equeg1_42fb}))
 \begin{equation}  \label{equeg1_42}
  \begin{aligned}
\ln\big( \varepsilon^{\theta^*}_TX_T^{\pi^*}\big)=c^*_2,
 \end{aligned}
 \end{equation} 
where $c^*_2=p^*_2(0)$ is an ${\cal H}_0$-measurable random variable.

The It\^{o} formula for It\^{o} integrals combined with the expressions of $\varepsilon_t^{\theta^*}$ and $X^{\pi^*}_t$ yields the following SDE:
     \begin{equation}\label{bsde_lt} 
 \begin{aligned}
\mathrm{d}\ln\big(\varepsilon^{\theta^*}_tX^{\pi^*}_t\big)=&\left[r_t+(\mu_0(t)-r_t)\pi_t^*+ \sigma_t\pi_t^*  \phi_t   \right]\mathrm{d}t -\frac{1}{2}\left[  (\sigma_t\pi^*_t)^2 +(\theta^*_t)^2-\sigma_t(\sigma_t-\tilde\sigma_t)(   \pi^*_t)^2 \right]\mathrm{d}t\\
&+\left(\sigma_t\pi^*_t+\theta^*_t\right)\mathrm{d}W_{\cal H}(t).
\end{aligned} 
 \end{equation}
Put $L^*_t=\ln\big(\varepsilon^{\theta^*}_tX^{\pi^*}_t\big)$ and $ z^*_t= \sigma_t\pi^*_t+\theta^*_t$. Thus, combining it with (\ref{reduceth6}) we have
   \begin{equation}\label{bsde_zt1}  
   \begin{aligned}
 &\pi_t^*=\frac{ z_t^*+\tilde\phi_t}{\sigma_t+\tilde\sigma_t}  ,   
\end{aligned} 
 \end{equation}
and
   \begin{equation}\label{bsde_zt2} 
\begin{aligned}
     &\theta^*_t=\frac{\tilde\sigma_tz^*_t-\sigma_t\tilde\phi_t}{\sigma_t+\tilde\sigma_t}. 
\end{aligned} 
 \end{equation}
The SDE (\ref{bsde_lt}) combined with (\ref{bsde_zt1}), (\ref{bsde_zt2}) and (\ref{equeg1_42}) leads to the following quadratic BSDE  
 \begin{eqnarray}\label{final_ex_1}
  \left\{ \begin{aligned}&\mathrm{d}L^*_t=-f_{\text{Q}}(t, z^*_t,\omega)\mathrm{d}t+  z^*_t\mathrm{d}W_{\cal H}(t), \quad 0\le t\le T,
  \\&L^*_T=c^*_2,\end{aligned} \right.   
 \end{eqnarray}
where the generator $f_{\text{Q}}:[0,T]\times \mathbb{R}\times\Omega\rightarrow \mathbb{R}$  is given by
  \begin{equation*} 
\begin{aligned}
f_{\text{Q}}(t,z,\omega)=&  \frac{z^2}{4}  -\frac{\tilde\phi_t}{2}z     -r_t
-  \frac{\tilde\phi_t^2}{4}  -\frac{(\sigma_t-\tilde\sigma_t)}{4(\sigma_t+\tilde\sigma_t)} (z+\tilde\phi_t)^2 .
\end{aligned} 
 \end{equation*}
 
  By (\ref{utilityofe}) and (\ref{equeg1_42}), the value $V$ can be calculated by
\begin{equation}\label{quavalue}
\begin{aligned}
V=\mathbb{E}\left[\varepsilon^{\theta^*}_T\ln\left(\varepsilon^{\theta^*}_T X^{\pi^*}_T  \right)  \right]=\mathbb{E} c^*_2=\mathbb{E}L^*_T.
\end{aligned}\end{equation}

If the filtration $\{{\cal H}_t\}_{0\le t\le T}$ is the augmentation of the natural filtration of $W_{\cal H}(t)$, which was also assumed in \cite{Biagini05}, then $c^*_2$ is a constant. Suppose that $c_2^*$, $\tilde\phi$, $r$, $\sigma$ and $\tilde\sigma$ are bounded. Then, by \cite[Theorems 4.1]{Fujii18}, the quadratic BSDE (\ref{final_ex_1}) has a unique strong solution $(L^*,  z^*)$, i.e., $L^*_t$ is a bounded continuous ${\cal H}_t$-adapted process, $  z^*_t$ is a measurable ${\cal H}_t$-adapted process with $\mathbb{E}\int_0^T |  z^*_t|^2\mathrm{d}t<\infty$ and $\int_0^t  z^*_s\mathrm{d}W_{\cal H}(s)$ is an ${\cal H}_t$-${\cal BMO}$-martingale (see \cite{He92}), and $(L^*,   z^*)$ satisfies the BSDE (\ref{final_ex_1}). 

\begin{remark}
Under mild conditions on the Malliavin derivative, we can obtain the formulae for $   z^*_t $ as follows (see \cite[Corollary 5.1]{Fujii18})
 \begin{eqnarray}\label{final_ex_bmoofz}
  \begin{aligned}
   z^*_t= D_t L_t^*.
  \end{aligned}     
 \end{eqnarray}
\end{remark}

To sum up, we give the following theorem.

   \begin{theorem} \label{ex_mainth1}
Assume that $\mu(t,x)=\mu_0(t)+\varrho_tx$ for some ${\cal F}_t$-adapted measurable processes $\mu_0(t)$ and $\varrho_t$ with $0\le \varrho_t\le \frac{1}{2}\sigma_t^2$, and $g(\theta)=\frac{1}{2} \theta^2 $. Suppose $(\pi^*,\theta^*)\in {\cal A}_1'\times{\cal A}_2'$ is optimal for Problem \ref{sdg} under the conditions of Theorem \ref{mainth4}. Then $(\pi^*,\theta^*)$ and $V$ are given by (\ref{bsde_zt1}), (\ref{bsde_zt2}) and (\ref{quavalue}), where $(L^*,  z^*)$ solves the quadratic BSDE (\ref{final_ex_1}), where the ${\cal H}_0$-measurable random variable $c^*_2$ can be determined by the initial value condition $L^*_0=\ln X_0$\footnote{In fact, integrating (\ref{final_ex_1}) from $t$ to $T$ yields $L^*_T-L^*_t=-\int_t^Tf_{\text{Q}}(s,  z^*_s,\omega)\mathrm{d}s+\int_t^T  z^*_s\mathrm{d}W_{\cal H}(s)$. Taking conditional expectation and assuming the It\^{o} integrals are $L^2$-martingales, we get $L^*_t=\mathbb{E}\left[  \int_t^Tf_{\text{Q}}(s,  z^*_s,\omega)\mathrm{d}s+L^*_T \big{|}{\cal H}_t\right]$. Taking $t=0$ and using the initial value condition we have $c^*_2=\ln X_0-\mathbb{E}\left[  \int_0^Tf_{\text{Q}}(s,   z^*_s,\omega)\mathrm{d}s \big{|}{\cal H}_0\right]$.}. Furthermore, if $\{{\cal H}_t\}_{0\le t\le T}$ is the augmentation of the natural filtration of $W_{\cal H}(t)$, and $c_2^*$, $\tilde\phi$, $r$, $\sigma$ and $\tilde\sigma$ are bounded, then the quadratic BSDE (\ref{final_ex_1}) has a unique strong solution and $z^*$ is given by (\ref{final_ex_bmoofz}) under mild conditions, where $c^*_2$ can be determined by traversing all constants such that the condition $L^*_0=\ln X_0$ holds.
 \end{theorem}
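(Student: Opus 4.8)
The plan is to collect, into a single argument, the chain of identities worked out in the text preceding the statement, whose skeleton is the \emph{combined method}: the half-characterization of Theorem~\ref{mainth6} supplies an algebraic link between $\pi^*$ and $\theta^*$, while the $\theta$-component of the Hamiltonian system of Theorem~\ref{mainth4} pins down $\ln(\varepsilon^{\theta^*}_T X^{\pi^*}_T)$, and together they close into one scalar BSDE. Since all the assumptions of Theorem~\ref{mainth4} are in force, Theorems~\ref{mainth1}--\ref{mainth6} apply; in particular $W$ admits the decomposition (\ref{decompofW}), so $X^{\pi^*}$ and $\varepsilon^{\theta^*}$ are genuine continuous $\mathcal{H}_t$-semimartingales driven by the $\mathcal{H}_t$-Brownian motion $W_{\cal H}$, with dynamics (\ref{wealthsde2}) and (\ref{half_equ2}).

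First I would specialize Theorem~\ref{mainth6}: with $\mu(t,x)=\mu_0(t)+\varrho_t x$ and the notation $\tilde\sigma_t,\tilde\phi_t$, equation (\ref{halfequ_21}) collapses to (\ref{reduceth6}), i.e.\ $\theta^*_t=\tilde\sigma_t\pi^*_t-\tilde\phi_t$. Next I would run, verbatim, the $\theta$-part of the computation of Section~\ref{sec_exampleg_sub1}: the optimality condition $\partial H^*/\partial\theta=0$ with $g(\theta)=\tfrac12\theta^2$ gives $\theta^*_t=-q^*_2(t)$; feeding this into the adjoint BSDE (\ref{adjoint}) for $p^*_2$ and comparing with the SDE for $\ln\varepsilon^{\theta^*}_t$ derived from (\ref{half_equ2}) yields $p^*_2(t)=p^*_2(0)-\ln\varepsilon^{\theta^*}_t$; the terminal condition $p^*_2(T)=\ln X^{\pi^*}_T$ then gives (\ref{equeg1_42}), namely $\ln(\varepsilon^{\theta^*}_T X^{\pi^*}_T)=c^*_2$ with $c^*_2:=p^*_2(0)$ an $\mathcal{H}_0$-measurable random variable.

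Then I would apply the classical It\^o formula to $L^*_t:=\ln(\varepsilon^{\theta^*}_t X^{\pi^*}_t)$, using $\mathrm{d}\varepsilon^{\theta^*}_t=\varepsilon^{\theta^*}_t\theta^*_t\,\mathrm{d}W_{\cal H}(t)$ and (\ref{wealthsde2}); substituting $\mu(t,\pi^*_t)=\mu_0(t)+\varrho_t\pi^*_t$ and $\sigma_t(\sigma_t-\tilde\sigma_t)=2\varrho_t$ gives the dynamics (\ref{bsde_lt}). Setting $z^*_t:=\sigma_t\pi^*_t+\theta^*_t$ and solving the system formed by this definition together with (\ref{reduceth6}) --- solvable because $\sigma_t+\tilde\sigma_t=2(\sigma^2_t-\varrho_t)/\sigma_t$ is bounded away from $0$ under $\varrho_t\le\tfrac12\sigma^2_t$ and $\sigma_t\ge\epsilon$ --- produces (\ref{bsde_zt1})--(\ref{bsde_zt2}); plugging these into (\ref{bsde_lt}) converts the drift into $-f_{\text{Q}}$, and with the terminal value (\ref{equeg1_42}) this is exactly the quadratic BSDE (\ref{final_ex_1}). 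The value formula (\ref{quavalue}) follows from $V=J(\pi^*,\theta^*)=\mathbb{E}[\varepsilon^{\theta^*}_T\ln X^{\pi^*}_T]+\mathbb{E}\int_0^T\varepsilon^{\theta^*}_s g(\theta^*_s)\,\mathrm{d}s$, using the Girsanov identity (\ref{utilityofe}) (the same computation as (\ref{part1ofJ})) to turn the last term into $\mathbb{E}[\varepsilon^{\theta^*}_T\ln\varepsilon^{\theta^*}_T]$, whence $V=\mathbb{E}[\varepsilon^{\theta^*}_T\ln(\varepsilon^{\theta^*}_T X^{\pi^*}_T)]=\mathbb{E}[\varepsilon^{\theta^*}_T c^*_2]=\mathbb{E}\,c^*_2$ by the $\mathcal{H}_t$-martingale property of $\varepsilon^{\theta^*}$ and $\varepsilon^{\theta^*}_0=1$; the determination of $c^*_2$ from $L^*_0=\ln X_0$ is the footnoted integrate-and-take-expectation identity.

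For the last assertions: if $\{\mathcal{H}_t\}$ is the augmentation of the natural filtration of $W_{\cal H}$ then $\mathcal{H}_0$ is trivial, so $c^*_2$ is a constant; I would then verify that $f_{\text{Q}}$ meets the hypotheses of \cite[Theorem~4.1]{Fujii18} --- it is quadratic in $z$ with bounded (and locally Lipschitz) coefficients, its $z^2$-coefficient being $\tilde\sigma_t/\bigl(2(\sigma_t+\tilde\sigma_t)\bigr)\ge 0$ since $\tilde\sigma_t\ge 0$ for $\varrho_t\le\tfrac12\sigma^2_t$ --- to obtain a unique bounded strong solution with ${\cal BMO}$-martingale part; the representation $z^*_t=D_tL^*_t$ is then \cite[Corollary~5.1]{Fujii18}, and $c^*_2$ is identified by scanning constants until $L^*_0=\ln X_0$. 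I expect the conceptual obstacle to lie in the reduction itself: the naive route through both adjoint equations of Theorem~\ref{mainth4} breaks down because (\ref{wealthsde2}) is not homogeneous in $X^{\pi}$ once $\mu$ depends on $\pi$ (Remark~\ref{explain_log_half}), so the crux is to recognize that passing to $L^*=\ln(\varepsilon^{\theta^*}X^{\pi^*})$ and eliminating $(\pi^*,\theta^*)$ via the half-characterization (\ref{reduceth6}) is precisely what collapses the two-component Hamiltonian system to a single closed quadratic BSDE; the secondary obstacle is the structural check on $f_{\text{Q}}$ needed for the quoted quadratic-BSDE well-posedness result, which is where the bound $\varrho_t\le\tfrac12\sigma^2_t$ enters (both to keep $\sigma_t+\tilde\sigma_t$ nondegenerate and to give the quadratic coefficient its sign).
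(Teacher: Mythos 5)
Your proposal is correct and follows essentially the same route as the paper: the half-characterization (\ref{reduceth6}) combined with the $\theta$-component of the Hamiltonian system to obtain $\ln(\varepsilon^{\theta^*}_T X^{\pi^*}_T)=c^*_2$, the It\^{o} formula applied to $L^*_t=\ln(\varepsilon^{\theta^*}_t X^{\pi^*}_t)$ with the substitution $z^*_t=\sigma_t\pi^*_t+\theta^*_t$ to close the quadratic BSDE (\ref{final_ex_1}), and the appeal to \cite{Fujii18} for well-posedness and the Malliavin representation of $z^*$. Your explicit verification that the $z^2$-coefficient of $f_{\text{Q}}$ equals $\tilde\sigma_t/(2(\sigma_t+\tilde\sigma_t))$ and that $\sigma_t+\tilde\sigma_t$ is nondegenerate under $\varrho_t\le\tfrac12\sigma_t^2$ is a small but worthwhile addition the paper leaves implicit.
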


\begin{remark}
If the filtration $\{{\cal H}_t\}_{0\le t\le T}$ in Theorem \ref{ex_mainth1} is not  the augmentation of the natural filtration of $W_{\cal H}(t)$, or the coefficients of the generator $f_{\text{Q}}$ is not necessarily bounded, we refer to \cite{Draouil15,Eyraoud05,Li06,Lu13,Wang07} for further results. Meanwhile, the ${\cal H}_0$-measurable random variable $c^*_2$ can be determined by traversing all ${\cal H}_0$-measurable random variable such that the condition $L^*_0=\ln X_0$ holds. Moreover, if ${\cal H}_0$ is generated by a random variable $F$ and all $\mathbb{P}$-negligible sets, then by the monotone class theorem of functional forms (see \cite{He92}), there exists a Borel measurable function $f$ such that $c^*_2=f(F)$, a.s. Thus, $c^*_2$ can be determined by traversing all Borel measurable functions $f$ such that the initial value condition $L^*_0=\ln X_0$ holds. 
\end{remark}

\subsection{A particular case}\label{Sub_particular1_d}
Next, we consider a particular case when the filtration is of initial enlargement type, i.e.,  
\begin{equation}\label{filtp}
{\cal H}_t=\bigcap_{s>t}({\cal F}_s\vee Y_0):=\bigcap_{s>t}\left({\cal F}_s\vee \int_0^{T_0}\varphi_{u}\mathrm{d}W_{u}\right),\quad 0\le t\le T,
\end{equation}
for some $T_0>T$, and all the parameter processes are assumed to be deterministic bounded functions. Here, $\varphi_t $ is some deterministic function satisfying $\Vert \varphi\Vert^2_{[s,t]}:=\int_s^{t}\varphi_u^2    \mathrm{d}u<\infty$ for all $0\le s\le t\le T_0$, and $\Vert \varphi\Vert^2_{[T,T_0]}>0$.

By the Donsker $\delta$ functional $\delta_y(Y_0)$ and similar procedure in Section \ref{Sub_particular1}, we have
\begin{equation} \label{dons_phi12}
\begin{aligned}
&\phi_t=\phi_t(y)|_{y=Y_0}=\frac{y-\int_0^t\varphi_s\mathrm{d} W_s}{\Vert \varphi\Vert^2_{[t,T_0]}}  \varphi_t\Bigg{|}_{y=Y_0}.
  \end{aligned}\end{equation}
Thus the BSDE (\ref{final_ex_1}) is equivalent to the following classical quadratic BSDE with respect to the filtration $\{{\cal F}_t\}_{0\le t\le T}$
 \begin{eqnarray}\label{final_ex_1dons}
  \left\{ \begin{aligned}&\mathrm{d}L^*_t(y)=-\bar f_{\text{Q}}(t, z^*_t(y),y,\omega)\mathrm{d}t+ z^*_t(y)\mathrm{d}W_t, \quad 0\le t\le T,
  \\&L^*_T(y)=c^*_2(y),\end{aligned} \right.   
 \end{eqnarray}
where the generator $\bar f_{\text{Q}}:[0,T]\times \mathbb{R} \times\mathbb{R}\times\Omega\rightarrow \mathbb{R}$  is given by
  \begin{equation*} 
\begin{aligned}
&\bar f_{\text{Q}}(t,z,y,\omega)=  \frac{z^2}{4}  -\frac{\iota_t-\phi_t(y)}{2}z   -r_t
-  \frac{(\iota_t+\phi_t(y))^2 }{4}  -\frac{ (\sigma_t-\tilde\sigma_t)}{4(\sigma_t+\tilde\sigma_t ) } 
  \left(z +\iota_t+\phi_t(y)  \right)^2 .
\end{aligned} 
 \end{equation*}

Moreover, the value can be calculated by
\begin{equation}\label{quavalue2}
V=\mathbb{E}(L^*_T(y)|_{y=Y_0}).
\end{equation}

Thus we have the following result.

   \begin{theorem} 
Assume that $\mu(t,x)=\mu_0(t)+\varrho_tx$ for some bounded functions $\mu_0(t)$ and $\varrho_t$ with $0\le \varrho_t< \frac{1}{2}\sigma_t^2$, and $g(\theta)=\frac{1}{2}  \theta^2 $. Assume further that $\{{\cal H}_t\}_{0\le t\le T}$ is given by (\ref{filtp}) and all parameter processes are deterministic bounded functions. Suppose $(\pi^*,\theta^*)\in {\cal A}_1'\times{\cal A}_2'$ is optimal for Problem \ref{sdg} under the conditions of Theorem \ref{mainth4}. Then $(\pi^*,\theta^*)$ and $V$ are given by (\ref{bsde_zt1}), (\ref{bsde_zt2}) and (\ref{quavalue2}), where $(L^*(y), z^*(y) )$ solves the classical quadratic BSDE (\ref{final_ex_1dons}), $\phi_t(y)$ is given by (\ref{dons_phi12}), and the ${\cal H}_0$-measurable random variable $c^*_2$ can be determined by traversing all Borel measurable functions $c^*_2(y)$ such that $L^*_0=\ln X_0$.
 \end{theorem}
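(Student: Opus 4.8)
The plan is to specialize Theorem~\ref{ex_mainth1} to the initial-enlargement filtration~(\ref{filtp}) and then to ``disintegrate'' the anticipating quadratic BSDE~(\ref{final_ex_1}) into a parametrized family of classical $\{{\cal F}_t\}$-BSDEs by means of the Donsker $\delta$ functional, exactly in the spirit of the linear computation in Section~\ref{Sub_particular1}. Since the hypotheses here are those of Theorem~\ref{ex_mainth1} (the conditions of Theorem~\ref{mainth4}, a quadratic penalty, an affine $\mu$, and bounded deterministic parameters) together with the extra structural assumption on $\{{\cal H}_t\}$, the first part of that theorem already gives the representations (\ref{bsde_zt1}), (\ref{bsde_zt2}) and (\ref{quavalue}) of $\pi^*$, $\theta^*$ and $V$ in terms of a solution $(L^*,z^*)$ of the anticipating quadratic BSDE~(\ref{final_ex_1}) (driven by the ${\cal H}_t$-Brownian motion $W_{\cal H}$), with the ${\cal H}_0$-measurable terminal value $c^*_2$ pinned down by $L^*_0=\ln X_0$. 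Hence it remains only to identify $(L^*,z^*)$ and $c^*_2$ here. As $Y_0=\int_0^{T_0}\varphi_u\,\mathrm{d}W_u$ is a nondegenerate Gaussian variable, its Donsker $\delta$ functional $\delta_y(Y_0)$ exists by Lemma~\ref{lemmaofdonsker}, and, since optimality forces the decomposition~(\ref{decompofW}) via Theorem~\ref{mainth3}, Lemma~\ref{lemmaofdonskersemi} yields the information drift $\phi_t=\phi_t(y)|_{y=Y_0}$ as in~(\ref{dons_phi12}), with $\phi_t(y)$ being ${\cal F}_t$-adapted for each fixed $y$.

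For the disintegration I would first substitute $\mathrm{d}W_{\cal H}(t)=\mathrm{d}W_t-\phi_t\,\mathrm{d}t$ into~(\ref{final_ex_1}); absorbing the extra term $z^*_t\phi_t$ into the driver turns $f_{\text{Q}}$ into $\bar f_{\text{Q}}$ and produces a BSDE driven by $W$ with ${\cal H}_t$-adapted coefficients. Every ${\cal H}_t$-adapted process on $[0,T]$ has the form $x_t=x(t,Y_0,\omega)$ with $x(t,y)$ being ${\cal F}_t$-adapted for each $y$; writing $L^*_t=L^*_t(y)|_{y=Y_0}$, $z^*_t=z^*_t(y)|_{y=Y_0}$ and $c^*_2=c^*_2(y)|_{y=Y_0}$, and invoking the defining property of the Donsker $\delta$ functional (Definition~\ref{definitionofdonsker}) together with the $L^2$-integrability of $\mathbb{E}[\delta_\cdot(Y_0)|{\cal F}_t]$ and $\mathbb{E}[D_t\delta_\cdot(Y_0)|{\cal F}_t]$ from Lemma~\ref{lemmaofdonskersemi}, the anticipating BSDE becomes an identity of integrals against $\delta_y(Y_0)\,\mathrm{d}y$ in $({\cal S})^{-1}$, just as in~(\ref{final_gex_1_donsker}) for the linear case. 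This identity holds if and only if, for almost every $y$, the pair $(L^*_t(y),z^*_t(y))$ solves the classical quadratic BSDE~(\ref{final_ex_1dons}) driven by $W$ with respect to $\{{\cal F}_t\}$. Since the parameters are bounded (so $\iota$, $\phi(y)$, $\sigma$, $\tilde\sigma$ are bounded, and $\sigma+\tilde\sigma>0$ by $\varrho<\tfrac{1}{2}\sigma^2$) and $\bar f_{\text{Q}}$ has quadratic growth in $z$, the quadratic-BSDE theory already used in Theorem~\ref{ex_mainth1} provides, for each admissible terminal function $c^*_2(y)$, a unique strong solution $(L^*(y),z^*(y))$.

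It remains to pin down $c^*_2$ and to read off $V$. Because $\{{\cal F}_t\}$ is the augmentation of the natural filtration of $W$, $\mathcal{F}_0$ is trivial up to null sets, so ${\cal H}_0=\sigma(Y_0)$ up to null sets; by the monotone class theorem of functional form there is a Borel measurable $c^*_2(\cdot)$ with $c^*_2=c^*_2(Y_0)$ a.s., and this function is determined by imposing the initial condition $L^*_0=\ln X_0$, i.e.\ by traversing the admissible Borel functions $c^*_2(y)$ until $L^*_0=\ln X_0$ holds. Substituting back into~(\ref{bsde_zt1})--(\ref{bsde_zt2}) gives $\pi^*$ and $\theta^*$, and~(\ref{quavalue}) yields $V=\mathbb{E}[L^*_T]=\mathbb{E}[c^*_2]=\mathbb{E}\big(L^*_T(y)|_{y=Y_0}\big)$, which is~(\ref{quavalue2}).

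The main obstacle is the rigorous justification of the disintegration, namely the \emph{equivalence} between the anticipating BSDE~(\ref{final_ex_1}) and the parametrized family~(\ref{final_ex_1dons}), not just the easy implication. One must verify that the forward integral admits the substitution representation $\int_0^\cdot z^*_s\,\mathrm{d}^-W_s=\int_0^\cdot z^*_s(y)\,\mathrm{d}W_s\big|_{y=Y_0}$, that the (quadratic) driver and the drift integral commute with $\int_{\mathbb{R}}(\cdot)\,\delta_y(Y_0)\,\mathrm{d}y$, and that convergence in $({\cal S})^{-1}$ is preserved throughout these manipulations; the quadratic term in $\bar f_{\text{Q}}$ makes the required integrability estimates more delicate than in the linear case of Section~\ref{Sub_particular1}. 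A secondary, milder point is checking that the terminal functions $c^*_2(y)$ arising in this way are admissible (for instance bounded) so that the quadratic-BSDE existence-and-uniqueness theory applies uniformly in $y$.
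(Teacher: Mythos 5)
Your proposal follows essentially the same route as the paper: the paper likewise specializes the large-insider result of Theorem \ref{ex_mainth1} to the filtration (\ref{filtp}), computes $\phi_t(y)$ via the Donsker $\delta$ functional as in (\ref{dons_phi12}), and disintegrates the anticipating quadratic BSDE (\ref{final_ex_1}) into the parametrized family (\ref{final_ex_1dons}) exactly as in the linear case of Section \ref{Sub_particular1}, determining $c^*_2(y)$ from $L^*_0=\ln X_0$. The technical obstacles you flag (rigorous equivalence of the disintegration and admissibility of the terminal functions) are real, but the paper's own argument is no more detailed on these points, so your write-up matches — and in places exceeds — the level of rigor in the original.
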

 
\section{The large insider case without model uncertainty}\label{Sub_particular2}
When considering the large insider case, the quadratic BSDE (\ref{final_ex_1}) or (\ref{final_ex_1dons}) has no closed-form solution. Thus we concentrate on the special situation without model uncertainty, that is, ${\cal A}_2'=\{(0)\}$. Then Problem \ref{sdg} degenerates to the following anticipating stochastic control problem.

\begin{problem}\label{sdg2}
 Select $\pi^*\in {\cal A}_1' $ such that
\begin{equation} \label{valueofsdg2}
 \tilde V:=\tilde J(\pi^*)=\sup_{\pi\in {\cal A}_1'}   \tilde J(\pi),
\end{equation}  
where $\tilde J(\pi):=\mathbb{E}\left[  \ln X_T^\pi \right]$. We call $\tilde V$ the value (or the optimal expected utility) of Problem \ref{sdg2}.
\end{problem}
 
Suppose that $\mu(t,x)=\mu_0(t)+\varrho_tx$ for some ${\cal F}_t$-adapted measurable processes $\mu_0(t)$ and $\varrho_t$ with $0\le \varrho_t<\frac{1}{2}\sigma_t^2$, and Assumptions \ref{assump1} and \ref{assump2} hold. Put $\iota_t=\frac{\mu_0(t)-r_t}{\sigma_t}$, $\tilde\sigma_t=\sigma_t-\frac{2\varrho_t}{\sigma_t}$, and $\tilde \phi_t=\iota_t+\phi_t$. 
  
 Since ${\cal A}_2'=\{0\}$, which implies that $\theta^*=0$ in (\ref{reduceth6}). Thus, the optimal strategy $\pi^*\in{\cal A}'_1$ is given by
 \begin{equation}\label{reduceth6_2} 
 \begin{aligned}
 \pi^*_t=\frac{\tilde \phi_t}{\tilde\sigma_t}=\frac{\mu_0(t)-r_t }{\sigma_t\tilde\sigma_t}+\frac{ \phi_t}{\tilde\sigma_t}.  \end{aligned}
 \end{equation}
 
 Substitute (\ref{reduceth6_2}) into (\ref{valueofsdg2}). Then by (\ref{halfequ_21}) and tedious calculation we have
  \begin{equation}\label{value3} 
  \begin{aligned}
   \tilde V=&\ln X_0+\mathbb{E}\int_0^Tr_t\mathrm{d}t+\frac{1}{2}\mathbb{E}\int_0^T
  \frac{ \sigma_t}{ \tilde\sigma_t }\left(\frac{\mu_0(t)-r_t}{\sigma_t}+\phi_t\right)^2   \mathrm{d}t.
    \end{aligned}
 \end{equation}

 Assume further that the insider filtration $\{{\cal H}_t\}_{0\le t\le T}$ is given by
  \begin{equation} \label{filtrationex_conti}
 \begin{aligned}
 {\cal F}_t\subset {\cal H}_t\subset  \bigcap_{s>t}\left({\cal F}_s\vee  W_{T_0}\right)=:\bar{\cal H}_t, \quad 0\le t\le T,  
   \end{aligned}
 \end{equation}
 for some $T_0>T$.
Then the enlargement of filtration technique can be applied to give the concrete expression of $\tilde \phi$ in (\ref{reduceth6_2}). We give the following lemma to characterize the decomposition of the ${\cal H}_t$-semimartingale $W$ in Theorem \ref{mainth3}. The proof can be found in \cite[page 327]{DiNunno09}.

\begin{lemma}
[Enlargement of filtration] 
\label{lemofenlarge1}
The process $W_t$, $t\in[0,T]$, is a semimartingale with respect to the filtration $\{{\cal H}_t\}_{0\le t\le T}$ given by (\ref{filtrationex_conti}). Its semimartingale decomposition is 
  \begin{equation}  
 \begin{aligned}
W _t=W_{\cal H}(t)+\int_0^t\frac{\mathbb{E}\big[W_{T_0}|  {\cal H}_s\big]-W_s}{T_0-s}\mathrm{d}s,\quad 0\le t\le T,  
   \end{aligned}
 \end{equation}
\end{lemma}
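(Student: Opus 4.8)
The plan is to reduce to the maximal filtration $\bar{\cal H}_t:=\bigcap_{s>t}({\cal F}_s\vee W_{T_0})$, where the statement is the classical Brownian–bridge / initial–enlargement decomposition, and then to descend to an arbitrary intermediate filtration ${\cal H}_t$ with ${\cal F}_t\subset{\cal H}_t\subset\bar{\cal H}_t$ by conditioning the compensator. For $\bar{\cal H}_t$ I would argue as follows. For $0\le t\le T<T_0$ the increment $W_{T_0}-W_t$ is independent of ${\cal F}_t$ and $\mathcal N(0,T_0-t)$-distributed, so the regular conditional law of $W_{T_0}$ given ${\cal F}_t$ is $\mathcal N(W_t,T_0-t)$, which is equivalent to the unconditional law $\mathcal N(0,T_0)$ with Radon–Nikodym density
\begin{equation*}
q_t(x)=\sqrt{\tfrac{T_0}{T_0-t}}\,\exp\left\{-\frac{(x-W_t)^2}{2(T_0-t)}+\frac{x^2}{2T_0}\right\}.
\end{equation*}
Hence Jacod's criterion holds; for fixed $x$ the process $t\mapsto q_t(x)$ is a positive ${\cal F}_t$-martingale, and It\^o's formula gives $\mathrm{d}\langle W,q_\cdot(x)\rangle_s=q_s(x)\,\frac{x-W_s}{T_0-s}\,\mathrm{d}s$. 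By Jacod's theorem on initial enlargement (see \cite{DiNunno09}), $W$ is a $\bar{\cal H}_t$-semimartingale on $[0,T]$ and
\begin{equation*}
\bar W_t:=W_t-\int_0^t\frac{W_{T_0}-W_s}{T_0-s}\,\mathrm{d}s,\qquad 0\le t\le T,
\end{equation*}
is a continuous $\bar{\cal H}_t$-local martingale, the drift integral being finite a.s.\ since $T_0-s\ge T_0-T>0$ on $[0,T]$; because the pathwise quadratic variation is unchanged under enlargement, $\langle\bar W\rangle_t=\langle W\rangle_t=t$, so $\bar W$ is a $\bar{\cal H}_t$-Brownian motion by L\'evy's theorem (see \cite{Karatzas91}).

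Next I would descend to a general ${\cal H}_t$. Since ${\cal F}_t\subset{\cal H}_t$, $W$ is ${\cal H}_t$-adapted, and being an $\bar{\cal H}_t$-semimartingale with ${\cal H}_t\subset\bar{\cal H}_t$, Stricker's theorem (see \cite{Protter05}) makes $W$ an ${\cal H}_t$-semimartingale. Put $\zeta_t:=\mathbb{E}[W_{T_0}\mid{\cal H}_t]$ and $A_t:=\int_0^t(\zeta_s-W_s)/(T_0-s)\,\mathrm{d}s$, which is well defined and ${\cal H}_t$-adapted because $T_0-s\ge T_0-T>0$ and $W,\zeta$ are integrable on $[0,T]$. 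For $0\le s<t\le T$, the $\bar{\cal H}$-martingale property of $\bar W$, the tower property (using ${\cal H}_s\subset\bar{\cal H}_s$), Fubini, and a further conditioning of each integrand on ${\cal H}_u\supset{\cal H}_s$ — legitimate for $u\ge s$ since $W_u$ and $(T_0-u)^{-1}$ are ${\cal H}_u$-measurable — give
\begin{equation*}
\mathbb{E}\big[W_t-W_s\mid{\cal H}_s\big]=\mathbb{E}\left[\int_s^t\frac{W_{T_0}-W_u}{T_0-u}\,\mathrm{d}u\;\Big|\;{\cal H}_s\right]=\mathbb{E}\left[\int_s^t\frac{\zeta_u-W_u}{T_0-u}\,\mathrm{d}u\;\Big|\;{\cal H}_s\right]=\mathbb{E}\big[A_t-A_s\mid{\cal H}_s\big].
\end{equation*}
Since $A_s$ is ${\cal H}_s$-measurable, this says exactly that $W_{\cal H}(t):=W_t-A_t$ is a continuous ${\cal H}_t$-martingale; as $A$ is continuous of finite variation, $\langle W_{\cal H}\rangle_t=\langle W\rangle_t=t$, so $W_{\cal H}$ is an ${\cal H}_t$-Brownian motion by L\'evy's theorem, which is the stated decomposition with $\phi_s=(\mathbb{E}[W_{T_0}\mid{\cal H}_s]-W_s)/(T_0-s)$.

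The delicate point is the descent step: the $\bar{\cal H}$-local martingale $\bar W$ is in general not ${\cal H}$-adapted, so one cannot simply project it; instead one must argue on the level of the compensator, verifying that passing to ${\cal H}$ replaces the drift $\frac{W_{T_0}-W_s}{T_0-s}$ by its ${\cal H}$-optional projection $\frac{\mathbb{E}[W_{T_0}\mid{\cal H}_s]-W_s}{T_0-s}$ (this is the content of the displayed computation, which is a hands‑on substitute for the Br\'emaud–Yor dual‑predictable‑projection formula), and then that the resulting martingale part is still a Brownian motion, for which one invokes the filtration‑invariance of the quadratic variation together with L\'evy's characterization. The remaining items — finiteness of $\int_0^T|\phi_s|\,\mathrm{d}s$ and upgrading ``local martingale'' to ``martingale'' where needed — are routine consequences of $T_0>T$ and the square‑integrability of $W$ and $\zeta$ on $[0,T]$.
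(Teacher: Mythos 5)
Your proof is correct and is exactly the standard enlargement-of-filtration argument: Jacod's criterion yields the information drift $\frac{W_{T_0}-W_s}{T_0-s}$ for the maximal filtration $\bar{\cal H}_t$, and your tower-property computation correctly projects this drift onto the intermediate filtration ${\cal H}_t$, replacing $W_{T_0}$ by $\mathbb{E}\big[W_{T_0}\mid{\cal H}_s\big]$ before invoking L\'evy's characterization. The paper itself gives no proof beyond a citation of \cite[page 327]{DiNunno09}, and your argument supplies precisely the details that reference contains, so the two approaches coincide.
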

where $W_{\cal H}(t)$ is an ${\cal H}_t$-Brownian motion.

Combined with Lemma \ref{lemofenlarge1}, the ${\cal H}_t$-adapted process $\phi_t$ in Theorem \ref{mainth3} is of the form
   \begin{equation}  
 \begin{aligned}
 \phi_t=\frac{\mathbb{E}\big[W_{T_0}|{\cal H}_t\big]-W_t}{T_0-t},\quad 0\le t\le T.
   \end{aligned}
 \end{equation}
 Thus, we can give a concrete characterization of the optimal strategy $\pi^*$ as follows
  \begin{equation}\label{reduceth6_2_2} 
 \begin{aligned}
 &\pi^*_t=\frac{  \mu_0(t)-r_t}{\sigma_t\tilde\sigma_t}+\frac{   \mathbb{E}\big[W_{T_0}|{\cal H}_t\big]-W_t}{\tilde\sigma_t({T_0-t})},
  \end{aligned}
 \end{equation}
 which coincides with the result (16.162) in \cite{DiNunno09}.
 
 To sum up, we give the following theorem.
 
    \begin{theorem} \label{ex_mainth1_special}
Assume that $\mu(t,x)=\mu_0(t)+\varrho_tx$ for some ${\cal F}_t$-adapted measurable processes $\mu_0(t)$ and $\varrho_t$ with $0\le \varrho_t<\frac{1}{2}\sigma_t^2$, and no model uncertainty is considered. Suppose $\pi^* \in {\cal A}_1' $ is optimal for Problem \ref{sdg2} under Assumptions \ref{assump1} and \ref{assump2}. Then $\pi^* $ is given by (\ref{reduceth6_2}), and $\tilde V$ is given by (\ref{value3}). Furthermore, if the filtration $\{{\cal H}_t\}_{0\le t\le T}$ is of the form (\ref{filtrationex_conti}), then $\pi^*$ is given by (\ref{reduceth6_2_2}).
 \end{theorem}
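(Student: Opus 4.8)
The plan is to read Problem~\ref{sdg2} as Problem~\ref{sdg} restricted to the degenerate control set ${\cal A}_2'=\{0\}$, so that the minimising control is forced to be $\theta^*=0$, hence $\varepsilon^{\theta^*}\equiv 1$ and ${\cal Q}^{\theta^*}=\mathbb{P}$. With this identification the whole chain of arguments in Section~\ref{half_section} applies verbatim: every step from Theorem~\ref{mainth1} through Theorem~\ref{mainth6} uses only the maximality of the performance functional in the first control, and Assumptions~\ref{assump1} and~\ref{assump2} are exactly the hypotheses invoked there (now read with $\varepsilon^{\theta^*}\equiv 1$). Consequently $W$ admits the canonical decomposition (\ref{decompofW}) with respect to $\{{\cal H}_t\}$, and $\pi^*$ satisfies equation (\ref{halfequ_21}) with the term $\sigma_t\theta_t^*$ deleted.

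Next I would substitute $\mu(t,x)=\mu_0(t)+\varrho_t x$, so that $\frac{\partial}{\partial x}\mu(t,\pi^*_t)=\varrho_t$, into this reduced form of (\ref{halfequ_21}); it collapses to the pointwise identity $(\sigma_t^2-2\varrho_t)\pi^*_t=\mu_0(t)-r_t+\sigma_t\phi_t$. Since $0\le\varrho_t<\frac{1}{2}\sigma_t^2$ and $\sigma_t\ge\epsilon>0$ we have $\sigma_t\tilde\sigma_t=\sigma_t^2-2\varrho_t>0$, so dividing through gives $\pi^*_t=\tilde\phi_t/\tilde\sigma_t$, which is (\ref{reduceth6_2}); equivalently this is (\ref{reduceth6}) specialised to $\theta^*=0$.

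To identify $\tilde V$, I would insert the closed form (\ref{equofxt}) of $X_T^{\pi^*}$ into $\tilde J(\pi^*)=\mathbb{E}[\ln X_T^{\pi^*}]$, rewrite $\int_0^T\sigma_s\pi^*_s\,\mathrm{d}^-W_s$ as $\int_0^T\sigma_s\pi^*_s\,\mathrm{d}W_{\cal H}(s)+\int_0^T\sigma_s\pi^*_s\phi_s\,\mathrm{d}s$ via (\ref{decompofW}) and Remark~\ref{extensionoffor}, and observe that $\int_0^\cdot\sigma_s\pi^*_s\,\mathrm{d}W_{\cal H}(s)$ is a genuine martingale because $\sigma\pi^*\in{\cal L}^{f}$ is $L^2$-bounded, so its expectation vanishes. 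It then remains to simplify the surviving drift: writing the integrand as $r_t+(\mu_0(t)-r_t+\sigma_t\phi_t)\pi^*_t+(\varrho_t-\frac{1}{2}\sigma_t^2)(\pi^*_t)^2$ and using $\mu_0-r+\sigma\phi=\sigma\tilde\phi$ together with $\varrho-\frac{1}{2}\sigma^2=-\frac{1}{2}\sigma\tilde\sigma$, the substitution $\pi^*=\tilde\phi/\tilde\sigma$ collapses it to $r+\frac{1}{2}\frac{\sigma}{\tilde\sigma}\tilde\phi^2=r+\frac{1}{2}\frac{\sigma}{\tilde\sigma}\big(\frac{\mu_0-r}{\sigma}+\phi\big)^2$, which is (\ref{value3}).

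For the last assertion, under the sandwiched filtration (\ref{filtrationex_conti}) I would apply Lemma~\ref{lemofenlarge1} to the maximal enlargement $\bar{\cal H}_t:=\bigcap_{s>t}({\cal F}_s\vee W_{T_0})$, which exhibits $W$ as a $\bar{\cal H}$-semimartingale with drift density $\big(\mathbb{E}[W_{T_0}\mid\bar{\cal H}_s]-W_s\big)/(T_0-s)$; then, since ${\cal F}_t\subset{\cal H}_t\subset\bar{\cal H}_t$ and $W$ is $\{{\cal H}_t\}$-adapted, a filtration-shrinkage (predictable-projection) argument makes $W$ an $\{{\cal H}_t\}$-semimartingale whose drift density is the $\{{\cal H}_t\}$-predictable projection of the $\bar{\cal H}$-density, and the tower property $\mathbb{E}[\,\cdot\mid{\cal H}_t]$ reduces this to the $\phi_t$ of Theorem~\ref{mainth3}, namely $\phi_t=\big(\mathbb{E}[W_{T_0}\mid{\cal H}_t]-W_t\big)/(T_0-t)$. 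Substituting into (\ref{reduceth6_2}) gives (\ref{reduceth6_2_2}). The step I expect to be the main obstacle is precisely this filtration-shrinkage identification of $\phi_t$ for an arbitrary intermediate filtration ${\cal H}$, since Lemma~\ref{lemofenlarge1} only supplies the decomposition for the maximal $\bar{\cal H}$; by comparison, checking the uniform-integrability and martingale conditions needed for the variational step and for taking expectations of $\ln X_T^{\pi^*}$ is routine, being covered by membership in ${\cal A}_1'$ and by Assumptions~\ref{assump1}--\ref{assump2}.
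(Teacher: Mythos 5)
Your proposal is correct and follows essentially the same route as the paper: set $\theta^*=0$ in the characterization of Section~\ref{half_section} (equivalently in (\ref{reduceth6})) to get $\pi^*_t=\tilde\phi_t/\tilde\sigma_t$, substitute into $\mathbb{E}[\ln X_T^{\pi^*}]$ using the decomposition (\ref{decompofW}) to obtain (\ref{value3}), and identify $\phi_t$ via the enlargement-of-filtration lemma. The one step you flag as a potential obstacle is not one: Lemma~\ref{lemofenlarge1} as stated (and as cited from the literature) already gives the semimartingale decomposition with drift density $(\mathbb{E}[W_{T_0}\mid{\cal H}_s]-W_s)/(T_0-s)$ for any intermediate filtration satisfying (\ref{filtrationex_conti}), so the filtration-shrinkage argument you sketch is just a re-proof of that lemma rather than an additional gap to fill.
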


Suppose all parameter processes are deterministic bounded functions. When ${\cal H}_t={\cal F}_t$ (i.e., the investor has no insider information) and ${\cal H}_t=\bar{\cal H}_t$ given by (\ref{filtrationex_conti}) (i.e., the investor owns the insider information $ W_{T_0}$ about the future price of the risky asset), we have the following results.

   \begin{corollary}  \label{endcor1}
Assume that $\mu(t,x)=\mu_0(t)+\varrho_tx$ for some deterministic bounded functions $\mu_0(t)$ and $\varrho_t$ with $0\le \varrho_t<\frac{1}{2}\sigma_t^2$, and no model uncertainty is considered. Assume further that ${\cal H}_t=  {\cal F}_t$ and all parameter processes are deterministic bounded functions. Suppose $\pi^* \in {\cal A}_1' $ is optimal for Problem \ref{sdg2} under the conditions of Theorem \ref{mainth2}. Then $\pi^*$ and $\tilde V$ are given by 
  \begin{equation} 
 \begin{aligned}
 &\pi^*_t=    \frac{\mu_0(t)-r_t}{\sigma_t  \tilde\sigma_t  },
  \\&\tilde V=\ln X_0+\int_0^Tr_t\mathrm{d}t+ \frac{1}{2}\int_0^T \frac{\sigma_t}{ \tilde\sigma_t}\left(\frac{\mu_0(t)-r_t}{\sigma_t}\right)^2\mathrm{d}t.
  \end{aligned}
 \end{equation}
 \end{corollary}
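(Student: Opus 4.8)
The plan is to obtain this corollary as a direct specialization of Theorem \ref{ex_mainth1_special} to the degenerate case ${\cal H}_t={\cal F}_t$. The one point that actually needs checking is that the drift term $\phi$ appearing in the semimartingale decomposition (\ref{decompofW}) of Theorem \ref{mainth3} vanishes here. Indeed, when ${\cal H}_t={\cal F}_t$ the process $W$ is already an ${\cal F}_t$-martingale, so $W_t=W_t+\int_0^t 0\,\mathrm{d}s$ is a continuous ${\cal H}_t$-semimartingale whose canonical (L\'evy) decomposition is the trivial one; by uniqueness of the decomposition of a continuous semimartingale we conclude $W_{\cal H}(t)=W_t$ and $\phi_t\equiv 0$. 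The standing hypotheses of Theorem \ref{ex_mainth1_special} (Assumptions \ref{assump1} and \ref{assump2}, and the admissibility in Definitions \ref{admiss} and \ref{admiss12}) are immediate in the present setting: all coefficients are deterministic and bounded, $\sigma\ge\epsilon>0$, and $0\le\varrho_t<\tfrac12\sigma_t^2$ forces $\tilde\sigma_t=\sigma_t-2\varrho_t/\sigma_t$ to be bounded and bounded away from $0$, so the candidate $\pi^*$ produced below is bounded and lies in ${\cal A}_1'$.

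With $\phi\equiv 0$ we have $\tilde\phi_t=\iota_t=(\mu_0(t)-r_t)/\sigma_t$, and substituting into formula (\ref{reduceth6_2}) for the optimal strategy in Theorem \ref{ex_mainth1_special} gives
\begin{equation*}
\pi^*_t=\frac{\tilde\phi_t}{\tilde\sigma_t}=\frac{\iota_t}{\tilde\sigma_t}=\frac{\mu_0(t)-r_t}{\sigma_t\tilde\sigma_t},\quad 0\le t\le T.
\end{equation*}
For the value, substituting $\phi_t\equiv 0$ into (\ref{value3}) and using that all parameter processes are deterministic (so the expectations are superfluous) yields
\begin{equation*}
\tilde V=\ln X_0+\int_0^T r_t\,\mathrm{d}t+\frac12\int_0^T\frac{\sigma_t}{\tilde\sigma_t}\Big(\frac{\mu_0(t)-r_t}{\sigma_t}\Big)^2\mathrm{d}t,
\end{equation*}
which is exactly the asserted expression.

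As an independent cross-check (which I would relegate to a remark or footnote rather than make the backbone of the argument) one can argue directly, since with neither insider information nor model uncertainty the problem is classical: by (\ref{equofxt}) the exact integral $\int_0^t\sigma_s\pi_s\,\mathrm{d}W_s$ is a true martingale under the stated boundedness, hence $\tilde J(\pi)=\ln X_0+\mathbb{E}\int_0^T[r_t+(\mu_0(t)+\varrho_t\pi_t-r_t)\pi_t-\tfrac12\sigma_t^2\pi_t^2]\,\mathrm{d}t$; maximising the integrand pointwise in $\pi_t$, its $\pi_t$-derivative $(\mu_0(t)-r_t)-(\sigma_t^2-2\varrho_t)\pi_t$ vanishes at $\pi_t=(\mu_0(t)-r_t)/(\sigma_t^2-2\varrho_t)=(\mu_0(t)-r_t)/(\sigma_t\tilde\sigma_t)$, and strict concavity ($\sigma_t^2-2\varrho_t>0$) makes this the unique maximiser; plugging back reproduces the value. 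In truth there is no genuinely hard step here — the only mild subtlety worth spelling out is the vanishing of $\phi$ together with $W_{\cal H}=W$, and the verification that the resulting $\pi^*$ belongs to ${\cal A}_1'$ (equivalently, that the stochastic integrals in play are honest martingales), both of which are routine under the deterministic boundedness assumptions.
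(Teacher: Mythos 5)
Your proposal is correct and follows essentially the same route as the paper: the corollary is obtained by specializing Theorem \ref{ex_mainth1_special} to ${\cal H}_t={\cal F}_t$, observing that $\phi\equiv 0$ (so $\tilde\phi_t=\iota_t$), and substituting into (\ref{reduceth6_2}) and (\ref{value3}), with the expectations dropping out because the parameters are deterministic. The direct pointwise-maximization cross-check you add is a harmless extra confirmation not present in the paper.
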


  \begin{corollary}  \label{endcor2}
Assume that $\mu(t,x)=\mu_0(t)+\varrho_tx$ for some deterministic bounded functions $\mu_0(t)$ and $\varrho_t$ with $0\le \varrho_t< \frac{1}{2}\sigma_t^2$, and no model uncertainty is considered. Assume further that ${\cal H}_t=\bar{\cal H}_t$ and all parameter processes are deterministic bounded functions. Suppose $\pi^* \in {\cal A}_1' $ is optimal for Problem \ref{sdg2} under the conditions of Theorem \ref{mainth2}. Then $\pi^*$ and $\tilde V$ are given by 
  \begin{equation} \label{endoptimal}
 \begin{aligned}
 &\pi^*_t=\frac{  \mu_0(t)-r_t}{\sigma_t\tilde\sigma_t}+\frac{   W_{T_0} -W_t}{\tilde\sigma_t({T_0-t})} ,
  \\&\tilde V=\ln X_0+\int_0^Tr_t\mathrm{d}t+\frac{1}{2}\int_0^T   \frac{ \sigma_t}{\tilde\sigma_t}  \left(\frac{\mu_0(t)-r_t}{\sigma_t}\right)^2\mathrm{d}t+\frac{1}{2}\int_0^T  \frac{ \sigma_t}{\tilde\sigma_t}  \frac{1}{T_0-t}  \mathrm{d}t.
  \end{aligned}
 \end{equation}
 \end{corollary}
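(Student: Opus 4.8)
The plan is to obtain Corollary \ref{endcor2} as the special case ${\cal H}_t=\bar{\cal H}_t$ of Theorem \ref{ex_mainth1_special}, exploiting the fact that enlarging the filtration all the way to $\bar{\cal H}_t=\bigcap_{s>t}({\cal F}_s\vee W_{T_0})$ makes the terminal-type random variable $W_{T_0}$ observable at every time $t\le T$. First I would check that all hypotheses of Theorem \ref{ex_mainth1_special} hold here: $\bar{\cal H}_t$ is of the form (\ref{filtrationex_conti}) with the maximal admissible choice ${\cal H}_t=\bar{\cal H}_t$; the coefficients are deterministic and bounded; Assumptions \ref{assump1} and \ref{assump2} are in force by hypothesis; and $0\le\varrho_t<\frac12\sigma_t^2$ makes $\tilde\sigma_t=\sigma_t-2\varrho_t/\sigma_t$ strictly positive, so the quantities appearing below are well defined. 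Theorem \ref{ex_mainth1_special} then asserts that the optimal strategy has the form (\ref{reduceth6_2_2}), with $\phi_t=(\mathbb{E}[W_{T_0}\mid{\cal H}_t]-W_t)/(T_0-t)$.

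The next step is to evaluate $\phi_t$. Since $t\le T<T_0$, the random variable $W_{T_0}$ is $\bar{\cal H}_t$-measurable by the very definition of $\bar{\cal H}_t$, so $\mathbb{E}[W_{T_0}\mid{\cal H}_t]=W_{T_0}$ and hence $\phi_t=(W_{T_0}-W_t)/(T_0-t)$; substituting into (\ref{reduceth6_2_2}) gives the first formula in (\ref{endoptimal}). For the value I would substitute this same $\phi_t$ into the expression (\ref{value3}) for $\tilde V$. Because all parameter processes are deterministic and $\iota_t:=(\mu_0(t)-r_t)/\sigma_t$ is deterministic, Fubini's theorem (the integrand being nonnegative, and bounded on $[0,T]$ since $1/(T_0-t)\le 1/(T_0-T)$ there while $\sigma_t/\tilde\sigma_t$ is bounded) reduces the task to computing $\mathbb{E}[(\iota_t+\phi_t)^2]$ pointwise in $t$. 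As $W$ is a $\mathbb{P}$-Brownian motion with respect to $\{{\cal F}_t\}$, the increment $W_{T_0}-W_t$ is centered Gaussian with variance $T_0-t$, whence $\mathbb{E}[\phi_t]=0$ and $\mathbb{E}[\phi_t^2]=1/(T_0-t)$, so that $\mathbb{E}[(\iota_t+\phi_t)^2]=\iota_t^2+1/(T_0-t)$. Inserting this into (\ref{value3}) and separating the two terms of the integral produces exactly the second line of (\ref{endoptimal}).

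Everything here is routine once Theorem \ref{ex_mainth1_special} is in hand; the one step that deserves a moment's attention is the collapse of the conditional expectation $\mathbb{E}[W_{T_0}\mid\bar{\cal H}_t]=W_{T_0}$, which is precisely where the maximal choice ${\cal H}_t=\bar{\cal H}_t$ (rather than some strictly smaller insider filtration) is used, together with the verification that the resulting drift $\phi_t=(W_{T_0}-W_t)/(T_0-t)$ still satisfies $\int_0^T|\phi_t|\,\mathrm{d}t<\infty$ almost surely — which holds because $T_0>T$ keeps $1/(T_0-t)$ bounded on $[0,T]$ — so that the semimartingale decomposition of Theorem \ref{mainth3} and the admissibility of $\pi^*$ are not disturbed.
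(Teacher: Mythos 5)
Your proposal is correct and follows exactly the route the paper intends: specialize Theorem \ref{ex_mainth1_special} to ${\cal H}_t=\bar{\cal H}_t$, use the $\bar{\cal H}_t$-measurability of $W_{T_0}$ to collapse $\mathbb{E}[W_{T_0}\mid\bar{\cal H}_t]=W_{T_0}$ in (\ref{reduceth6_2_2}), and compute $\mathbb{E}[(\iota_t+\phi_t)^2]=\iota_t^2+1/(T_0-t)$ in (\ref{value3}) from the Gaussian increment $W_{T_0}-W_t$. No gaps.
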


\section{Numerical analysis}
\label{numericalsec}

\begin{figure}[htbp]
  \centering
  \includegraphics[scale=0.6]{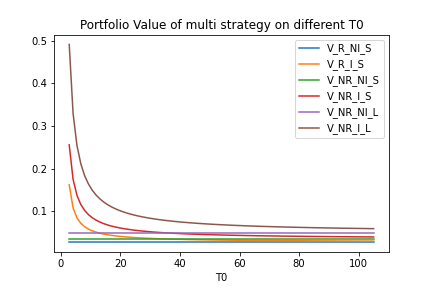}
  \caption{Portfolio Value of multi strategy on different T0. $\mu=0.15,\sigma=0.35,T=1,\rho= \frac{1}{4} \sigma^{2}$}
  \label{fig.1}
\end{figure}

We standardize the paramter T to 1.
As $T_{0}$ increase , the additional information of the insider decrease and the profit from insider trading of insider trading decays. The information rent is $\frac{1}{2}\int_0^T  \frac{ \sigma_t}{\tilde\sigma_t}  \frac{1}{T_0-t}  \mathrm{d}t.$ for large trader without model uncertainty , $\frac{1}{2}\int_0^T    \frac{1}{T_0-t}  \mathrm{d}t.$ for small trader and $\frac{1}{2}\ln \left( 1-\frac{T^2}{(2T_0-T)^2} \right)^{-1}+\frac{T}{2(2T_0-T)}+\frac{1}{4(2T_0-T)}\left( \int_0^T \frac{\mu_0(t)-r_t}{\sigma_t} \mathrm{d}t  \right)^2$ for robust
small trader, which is approximately inversely proportional to $\frac{1}{T_0-T}$. If $T_0$ is 10 times larger than
T , the outperformance of insider portfolio is economically insignificant.In simulation., the parameters are set as follows: $\mu=0.15,\sigma=0.35,T=1,\rho= \frac{1}{4} \sigma^{2}$.

    Robust portfolio reduces risk of model uncertainty  at the cost of  absolute return. However, if T0 is smaller than 12 T , the small robust insider still outperforms the small non-robust dealers. In other words,
the profit from insider trading offsets the loss from model uncertainty from model uncertainty.

To view this more clear, we plot the insider information needed,that is $ T_{0}$, on drift part $\mu$ and volatility $\sigma$.If the $\mu$ is larger or the $\sigma$ is smaller, the loss of model uncertainty will be larger. 
\begin{figure}[htbp]
  \centering
  \includegraphics[scale=0.6]{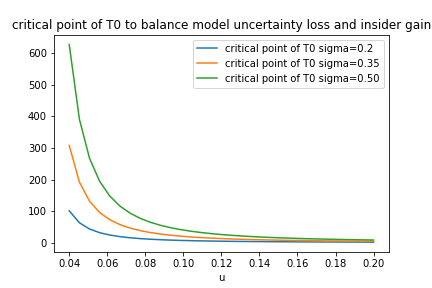}
  \caption{Insider information needed for an ambiguity-averse insurer to gain the same utility of an ambiguity-neutral insurer with no insider information. $T=1,\rho= \frac{1}{4} \sigma^{2},t= \frac{1}{2}T, W_{T_0}=1$}
  \label{fig.1}
\end{figure}

Large trades always obtain more profit than small traders in our models because they could  influence parameter $\mu$.

\begin{figure}[htbp]
  \centering
  \includegraphics[scale=0.6]{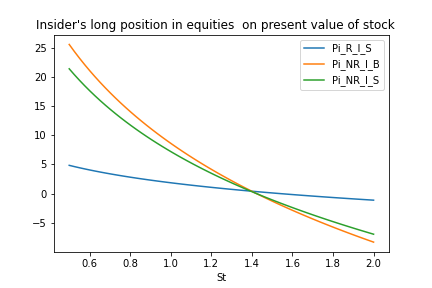}
  \caption{Portfolio Value of multi strategy on different T0. $\mu=0.08,\sigma=0.35,T=1,\rho= \frac{1}{4} \sigma^{2},t= \frac{1}{2}T, W_{T_0}=1$}
  \label{fig.1}
\end{figure}

We also analysis the relationship between optimal stock holdings and present stock price.We only show the curves of insiders because the strategy of traders without inside information will be trivial on the assumption of  constant model parameters.
Traders will definitely reduce long position of stocks with a higher $W_{t}. $ Conditional on $W_{T_0}=W{t}$,the all types of traders will hold long position of stocks because of positive drift term .

The robust trader is dramatically less aggressive and the derivative of $\pi_{t}$ with respect to $W_{t} $ is $\frac {-1}{\sigma_{t}(T_0-t+t_{0}-T)}$.Worrying about risk of model uncertainty , the ambiguity averse trader will not response strongly to changes of
the difference of market conditions and insider information.  The large trader without model uncertaity is the most aggressive one with the 
 derivative of $\pi_{t}$ with respect to $W_{t} $ as  $ \frac{-1}{\tilde\sigma_t (T_0-t)  }$.And the corresponding one of  small trader without model uncertaity is  $ \frac{-1}{\sigma_t (T_0-t)  }$.


\section{Conclusion}
\label{sec:conclusion}

In this paper, we improve some properties of the forward integral and obtain the It\^{o} formula for forward integrals by Malliavin calculus. We consider the optimization problem of insider trading under model uncertainty, and give the characterization of the robust optimal portfolio. We consider two typical cases when the insider is small and large, and give the corresponding BSDEs to characterize the portfolio strategies. In the case of small insider, we obtain the closed form of the portfolio strategy and the value function by using the Donsker $\delta$ functional. We use simulation to show the comparison of portfolio strategies under different situations and give some economic analysis.

For further work, the quadratic BSDE corresponding to the robust optimal portfolio strategy for a large insider and more general utility functions need to be studied. Moreover, extending the optimization problem to other models, like the jump-diffusion model, is also a subject of ongoing research.

\section*{Disclosure statement}
No potential conflict of interest was reported by the authors.

\section*{Funding}

The work is funded by the National Natural Science Foundation of China (No. 72071119).
 
\bibliographystyle{IEEEtran}
\bibliography{malliavin_op}

\begin{appendices}
\section{Some results in Malliavin calculus}\label{appM}

\setcounter{equation}{0} 
\setcounter{theorem}{0}

\renewcommand\theequation{A.\arabic{equation}}
\renewcommand\thetheorem{A.\arabic{theorem}}
\renewcommand\theremark{A.\arabic{remark}}

The following lemma is the chain rule of $D_t$, which is an extension of Lemma A.1 in \cite{Ocone91}.

\begin{lemma}
\label{chainrule}
Let ${\bm X}=(X_1,\cdots, X_n)\in D^{1,p}(\Omega;\mathbb{R}^n)$ and $f\in C^1(\mathbb{R}^n)$ for some $n\in {\mathbb N}_+$ and $p\ge 1$. Assume that  
\begin{equation}
 \|f({\bm X})\|_{L^p(\Omega)}+\left\|  \sum_{l=1}^n \frac{\partial f}{\partial x_l}({\bm X})\cdot D_t { X}_l \right\|_{L^p(\Omega; H)}<\infty.
\end{equation}
Then $f({\bm X})\in D^{1,p}(\Omega)$ and $D_tf({\bm X})= \sum_{l=1}^n \frac{\partial f}{\partial x_l}({\bm X})\cdot D_t { X}_l$.
\end{lemma}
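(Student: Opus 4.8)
The plan is to prove the chain rule for the Malliavin derivative by a standard density-and-closability argument, extending Lemma A.1 of \cite{Ocone91} from the smooth/bounded setting to the $D^{1,p}$ setting under the stated integrability hypothesis. First I would recall that if $\bm X=(X_1,\dots,X_n)$ is a vector of smooth random variables in $\mathcal S(\Omega)$ and $\varphi\in C^\infty_p(\mathbb R^n)$, then by definition of $D_t$ we have $D_t\varphi(\bm X)=\sum_{l=1}^n\frac{\partial\varphi}{\partial x_l}(\bm X)\,D_tX_l$; this is the base case and needs no proof beyond unwinding the definition (or it is exactly the case already handled in \cite{Ocone91} when $\varphi$ and its derivatives are bounded).

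Next I would pass from $\varphi\in C^\infty_p$ (or bounded smooth) to a general $f\in C^1(\mathbb R^n)$, and from smooth $\bm X$ to $\bm X\in D^{1,p}(\Omega;\mathbb R^n)$, simultaneously by approximation. Choose a sequence $\bm X^{(k)}\in\mathcal S(\Omega;\mathbb R^n)$ with $\bm X^{(k)}\to\bm X$ in $D^{1,p}(\Omega;\mathbb R^n)$, and by passing to a subsequence assume $\bm X^{(k)}\to\bm X$ and $D_t\bm X^{(k)}\to D_t\bm X$ almost surely (in the appropriate sense). Also approximate $f$ by a sequence $f_m\in C^\infty_p(\mathbb R^n)$ (e.g. mollifications times smooth cutoffs) such that $f_m\to f$ and $\nabla f_m\to\nabla f$ uniformly on compact sets. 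For fixed $m$ and $k$ the base case gives $D_t f_m(\bm X^{(k)})=\sum_l \partial_l f_m(\bm X^{(k)})\,D_tX_l^{(k)}$. The core of the argument is then to let $k\to\infty$ (for fixed $m$) and then $m\to\infty$, showing that the left sides converge in $L^p(\Omega)$ to $f(\bm X)$ and the right sides converge in $L^p(\Omega;H)$ to $\sum_l \partial_l f(\bm X)\,D_tX_l$, and then to invoke the closedness of the operator $D_t:D^{1,p}(\Omega)\to L^p(\Omega;H)$ (recorded in the excerpt as part of the construction of $D^{k,p}$) to conclude $f(\bm X)\in D^{1,p}(\Omega)$ with the claimed derivative. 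The hypothesis $\|f(\bm X)\|_{L^p(\Omega)}+\|\sum_l\partial_l f(\bm X)\,D_tX_l\|_{L^p(\Omega;H)}<\infty$ is precisely what guarantees the limit candidate lies in the right spaces.

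The main obstacle is the convergence of $\sum_l \partial_l f_m(\bm X^{(k)})\,D_tX_l^{(k)}$ to $\sum_l \partial_l f(\bm X)\,D_tX_l$ in $L^p(\Omega;H)$, because without growth bounds on $\nabla f$ one cannot simply dominate; here one has to exploit the a.s.\ convergence together with the integrability assumption and a uniform-integrability / Vitali-type argument, or alternatively a localization over the events $\{|\bm X|\le N\}$ combined with the local property of $D_t$ (Lemma \ref{locD}, referenced in the excerpt) to reduce to the compactly supported case where uniform convergence of $\nabla f_m$ does the job, then let $N\to\infty$ controlling the tails by the finiteness hypothesis. A parallel but easier argument handles $f_m(\bm X^{(k)})\to f(\bm X)$ in $L^p(\Omega)$. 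Once both convergences are in place, closedness of $D_t$ finishes the proof immediately; I would also remark that the case $n=1$, $f'$ bounded is exactly \cite[Lemma A.1]{Ocone91}, so the novelty is only the relaxation to $C^1$ with the integrability side condition and to vector-valued arguments.
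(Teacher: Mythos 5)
Your proposal follows exactly the route the paper itself takes: the paper's entire proof of Lemma \ref{chainrule} is the single sentence ``similar to Lemma A.1 of \cite{Ocone91}'', i.e., precisely the approximation-by-smooth-functionals argument combined with the closedness of $D_t$ that you spell out. The only place where your sketch (and the paper's non-proof) is thin is the limit passage for the gradient term, which is where the extension beyond the bounded-derivative case actually lives: a.s.\ convergence of $\sum_l \partial_l f_m({\bm X}^{(k)})D_tX_l^{(k)}$ together with finiteness of $\|\sum_l \partial_l f({\bm X})D_tX_l\|_{L^p(\Omega;H)}$ does not by itself yield $L^p(\Omega;H)$ convergence, and your localization alternative only delivers $f({\bm X})\in D^{1,p}_{\text{loc}}(\Omega)$ with the right local derivative, after which recovering global membership in $D^{1,p}(\Omega)$ from the integrability hypothesis requires a weak-compactness (or Vitali/uniform-integrability) argument that is standard for $p>1$ but genuinely delicate at $p=1$, a case the lemma as stated includes.
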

\begin{proof}
The proof is similar with that of Lemma A.1 in \cite{Ocone91}.
\end{proof}

The following two lemmas with respect to $\delta$ are the multiplication formula, and the boundedness property, respectively. 

\begin{lemma}
\label{multidelta}
 \textnormal{(Proposition 1.3.3,  \cite{Nualart06})} 
Let $X\in D^{1,2}(\Omega)$ and ${ u}\in \text{Dom}\ {\delta}$. Suppose that $X{ u}\in L^2(\Omega;H)$. Then $X{ u}\in \text{Dom}\ {\delta}$ and we have 
\begin{equation}\label{prop133}
{\delta}(X{ u})=X\delta{ u}-\int_0^T   D_t X\cdot   u_t\mathrm{d}t
\end{equation}
 provided the right-hand side of (\ref{prop133}) is square integrable.
\end{lemma}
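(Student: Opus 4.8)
The plan is to verify the defining duality (adjoint) relation for the divergence operator $\delta$. By definition, a process $v\in L^2(\Omega;H)$ belongs to $\text{Dom }\delta$ with $\delta v=G$ if and only if $G\in L^2(\Omega)$ and $\mathbb{E}[\langle v,DF\rangle_H]=\mathbb{E}[F\,G]$ holds for every $F\in D^{1,2}(\Omega)$. I would apply this with $v=Xu$, which lies in $L^2(\Omega;H)$ by hypothesis, and with $G:=X\,\delta u-\langle DX,u\rangle_H$, which lies in $L^1(\Omega)$ automatically (Cauchy--Schwarz, since $DX,u\in L^2(\Omega;H)$ and $X,\delta u\in L^2(\Omega)$) and in $L^2(\Omega)$ by the stated proviso. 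Thus the whole proof reduces to establishing this duality identity for the specific $v$ and $G$.

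First I would fix a dense test class small enough to carry a Leibniz rule: let ${\cal S}_0$ be the subclass of smooth cylindrical functionals in ${\cal S}(\Omega)$ that are bounded and have bounded Malliavin derivative; ${\cal S}_0$ is dense in $D^{1,2}(\Omega)$. For $F\in{\cal S}_0$ and $X\in D^{1,2}(\Omega)$, an approximation argument shows that $FX\in D^{1,2}(\Omega)$ with $D(FX)=F\,DX+X\,DF$: write $X=\lim_n X_n$ in $D^{1,2}(\Omega)$ with $X_n$ elementary smooth, apply the ordinary Leibniz rule to $FX_n$, and pass to the limit using the closedness of $D$ together with the bounds on $\|F\|_\infty$ and $\|DF\|_\infty$ (this is the variant of Lemma \ref{chainrule} needed here). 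Then, using $u\in\text{Dom }\delta$ and the duality relation applied to $FX\in D^{1,2}(\Omega)$,
\[
\begin{aligned}
\mathbb{E}\big[\langle Xu,DF\rangle_H\big]
&=\mathbb{E}\big[\langle u,X\,DF\rangle_H\big]
=\mathbb{E}\big[\langle u,D(FX)-F\,DX\rangle_H\big]\\
&=\mathbb{E}[FX\,\delta u]-\mathbb{E}\big[F\,\langle DX,u\rangle_H\big]
=\mathbb{E}[F\,G].
\end{aligned}
\]

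To conclude, I would extend this identity from ${\cal S}_0$ to all of $D^{1,2}(\Omega)$ by continuity. The map $F\mapsto\mathbb{E}[\langle Xu,DF\rangle_H]$ is continuous on $D^{1,2}(\Omega)$, since $\|Xu\|_{L^2(\Omega;H)}<\infty$ gives $|\mathbb{E}[\langle Xu,DF\rangle_H]|\le\|Xu\|_{L^2(\Omega;H)}\|F\|_{D^{1,2}(\Omega)}$, and the map $F\mapsto\mathbb{E}[F\,G]$ is continuous on $D^{1,2}(\Omega)$, since $\|G\|_{L^2(\Omega)}<\infty$ gives $|\mathbb{E}[F\,G]|\le\|G\|_{L^2(\Omega)}\|F\|_{L^2(\Omega)}$; as the two continuous functionals agree on the dense set ${\cal S}_0$, they agree on all of $D^{1,2}(\Omega)$. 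By the definition of the adjoint this says precisely that $Xu\in\text{Dom }\delta$ and $\delta(Xu)=G=X\,\delta u-\int_0^T D_tX\cdot u_t\,\mathrm{d}t$, which is (\ref{prop133}).

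I expect the genuinely delicate point to be the Leibniz step $FX\in D^{1,2}(\Omega)$ with $D(FX)=F\,DX+X\,DF$ for a merely $D^{1,2}$-regular $X$: the test class must be bounded together with its derivatives so that the product rule survives the passage to the limit $X_n\to X$, yet still dense in $D^{1,2}(\Omega)$. The square-integrability proviso on the right-hand side of (\ref{prop133}) is what makes the final continuity argument go through, ensuring that $F\mapsto\mathbb{E}[F\,G]$ is $L^2(\Omega)$-bounded and hence that $G$ qualifies as $\delta(Xu)$.
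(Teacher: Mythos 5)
Your proof is correct: the paper does not actually prove this lemma (it is quoted verbatim from Proposition 1.3.3 of Nualart's book), and your argument is precisely the standard one from that source — verify the adjoint duality $\mathbb{E}[\langle Xu,DF\rangle_H]=\mathbb{E}[FG]$ on a dense class of bounded smooth functionals via the Leibniz rule $D(FX)=F\,DX+X\,DF$, then extend by continuity using the square-integrability proviso on $G$. All the delicate points you flag (boundedness of $F$ and $DF$ to justify the product rule by closedness of $D$, and the role of the $L^2$ hypothesis on the right-hand side in making $F\mapsto\mathbb{E}[FG]$ an $L^2$-bounded functional so that $Xu\in\text{Dom}\,\delta$) are handled correctly.
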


\begin{lemma}
\label{boundeddelta}
 \textnormal{(Proposition 1.3.1,  \cite{Nualart06})} 
 $D^{1,2}(\Omega;H)\subset \text{Dom}\ {\delta}$.   Moreover, $\delta$ is bounded from $D^{1,2}(\Omega;H)$ into $L^2(\Omega)$.
\end{lemma}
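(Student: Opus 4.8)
The plan is to prove the operator bound $\|\delta u\|_{L^2(\Omega)}\le \|u\|_{D^{1,2}(\Omega;H)}$ on the dense subclass $\mathcal{S}(\Omega;H)$ of smooth $H$-valued random variables, and then extend it to all of $D^{1,2}(\Omega;H)$ by density, using that $\delta$, being the adjoint of the densely defined closed operator $D$, is automatically closed. The inclusion $D^{1,2}(\Omega;H)\subset \text{Dom }\delta$ drops out of the extension argument itself.

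First I would fix $u=\sum_{j}F_jh_j\in\mathcal{S}(\Omega;H)$ with $F_j\in\mathcal{S}(\Omega)$ and $h_j\in H$. Since $\delta(F_jh_j)=F_j\,\delta(h_j)-\langle DF_j,h_j\rangle_H$ (by the multiplication formula, Lemma \ref{multidelta}, or equivalently by a one-line integration by parts against a test functional), where $\delta(h_j)=\int_0^T h_j(s)\,\mathrm{d}W_s$, each $\delta u$ is again a smooth functional, so $\delta u\in D^{1,2}(\Omega)$ and its Malliavin derivative can be computed termwise. A direct differentiation, using $D_t\delta(h_j)=h_j(t)$ and $D_t\langle DF_j,h_j\rangle_H=\langle D(D_tF_j),h_j\rangle_H$, yields the commutation relation
\[
D_t(\delta u)=u_t+\delta\big((D_tu)_\cdot\big),\qquad 0\le t\le T,
\]
where the inner divergence acts on the process $s\mapsto D_tu_s$ for each fixed $t$. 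I expect this identity to be the main obstacle: although the computation on $\mathcal{S}(\Omega;H)$ is elementary, one must carefully track the two ``time'' variables $s$ and $t$ and check that the formula passes through the finite sum and through the product rule for $D_t$.

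Granting the commutation relation, I would apply the duality defining $\delta$ twice. With the test functional $F=\delta u\in D^{1,2}(\Omega)$, and then inserting the commutation relation,
\[
\mathbb{E}\big[(\delta u)^2\big]=\mathbb{E}\Big[\int_0^T (D_t\delta u)\,u_t\,\mathrm{d}t\Big]=\mathbb{E}\Big[\int_0^T u_t^2\,\mathrm{d}t\Big]+\mathbb{E}\Big[\int_0^T u_t\,\delta\big((D_tu)_\cdot\big)\,\mathrm{d}t\Big].
\]
For fixed $t$ the variable $u_t$ lies in $D^{1,2}(\Omega)$, so a second application of duality together with Fubini gives
\[
\mathbb{E}\Big[\int_0^T u_t\,\delta\big((D_tu)_\cdot\big)\,\mathrm{d}t\Big]=\mathbb{E}\Big[\int_0^T\!\!\int_0^T D_su_t\,D_tu_s\,\mathrm{d}s\,\mathrm{d}t\Big].
\]
Viewing $(s,t)\mapsto D_su_t(\omega)$ as the kernel of a Hilbert--Schmidt operator on $H$, the ``trace of the square'' is bounded by the Hilbert--Schmidt norm squared (split the kernel into its symmetric and antisymmetric parts, which are orthogonal in the Hilbert--Schmidt inner product and contribute with opposite signs to the trace), so $\int_0^T\!\int_0^T D_su_t\,D_tu_s\,\mathrm{d}s\,\mathrm{d}t\le \int_0^T\!\int_0^T (D_su_t)^2\,\mathrm{d}s\,\mathrm{d}t$ pointwise in $\omega$. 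Hence
\[
\mathbb{E}\big[(\delta u)^2\big]\le \mathbb{E}\big[\|u\|_H^2\big]+\mathbb{E}\big[\|Du\|_{H\otimes H}^2\big]=\|u\|_{D^{1,2}(\Omega;H)}^2 .
\]

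Finally, for general $u\in D^{1,2}(\Omega;H)$ I would pick $u_n\in\mathcal{S}(\Omega;H)$ with $u_n\to u$ in $D^{1,2}(\Omega;H)$; the bound shows $\{\delta u_n\}$ is Cauchy in $L^2(\Omega)$, with some limit $G$. For every $F\in D^{1,2}(\Omega)$ one has $\mathbb{E}[FG]=\lim_n\mathbb{E}[F\,\delta u_n]=\lim_n\mathbb{E}[\langle DF,u_n\rangle_H]=\mathbb{E}[\langle DF,u\rangle_H]$, so $u\in\text{Dom }\delta$ with $\delta u=G$; letting $n\to\infty$ in the inequality gives $\|\delta u\|_{L^2(\Omega)}\le\|u\|_{D^{1,2}(\Omega;H)}$, which is the claimed boundedness, and in particular the inclusion $D^{1,2}(\Omega;H)\subset\text{Dom }\delta$.
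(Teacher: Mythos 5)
Your proof is correct and is essentially the argument the paper delegates to \cite{Nualart06} (Proposition 1.3.1 together with the Skorohod isometry estimate): establish the commutation relation $D_t(\delta u)=u_t+\delta(D_t u)$ and the resulting energy identity on smooth elementary processes, bound the off-diagonal term $\int_0^T\!\int_0^T D_s u_t\,D_t u_s\,\mathrm{d}s\,\mathrm{d}t$ by $\|Du\|_{H\otimes H}^2$, and extend by density via the duality characterization of $\mathrm{Dom}\ \delta$. The only cosmetic difference is that you bound the trace term via the symmetric/antisymmetric kernel decomposition rather than a direct Cauchy--Schwarz in $(s,t)$; both give the same constant.
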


The following two propositions characterize the local properties of $D_t$ and $\delta$.

\begin{lemma}
\label{locD}
  \textnormal{(Proposition 3.8,  \cite{Huang00})} 
The $X$ be a random variable in the space $D^{1,1}$ such that $X=0$ a.s. on some set $A\in {\cal F}$. Then $DX=0$ a.s. on $A$.
\end{lemma}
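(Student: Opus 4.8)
The plan is to reduce to the set $A=\{X=0\}$ and then combine the chain rule with the closedness of the operator $D$. Since $X=0$ a.s.\ on $A$, we have $A\subseteq\{X=0\}$ up to a $\mathbb P$-null set, so it suffices to show $D_tX=0$ a.s.\ on $\{X=0\}$, i.e.\ $1_{\{X=0\}}\,D_tX=0$ in $L^1(\Omega;H)$.

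First I would fix a regularizing sequence $\phi_n\in C^1(\mathbb R)$ with $\phi_n(0)=0$ and $0\le\phi_n'\le 1$, chosen so that $\phi_n(x)\to x$ uniformly on $\mathbb R$ while $\phi_n'(x)\to 1_{\{x\neq 0\}}$ for every $x$. A concrete choice is $\phi_n(x)=x-n^{-1}\arctan(nx)$, for which $\phi_n'(x)=1-(1+n^2x^2)^{-1}$, $\phi_n'(0)=0$, and $\sup_{x}|\phi_n(x)-x|\le\pi/(2n)$. Because $X\in D^{1,1}\subset L^1(\Omega)$ and $D_tX\in L^1(\Omega;H)$, the bounds $|\phi_n(x)|\le|x|+\pi/(2n)$ and $|\phi_n'|\le 1$ guarantee that the integrability hypothesis of the chain rule (Lemma \ref{chainrule} with $p=1$ and one variable) holds, namely $\|\phi_n(X)\|_{L^1(\Omega)}+\|\phi_n'(X)\,D_tX\|_{L^1(\Omega;H)}<\infty$. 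Hence $\phi_n(X)\in D^{1,1}(\Omega)$ and $D_t\phi_n(X)=\phi_n'(X)\,D_tX$.

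Next I would pass to the limit $n\to\infty$. On the one hand $\phi_n(X)\to X$ in $L^1(\Omega)$ by the uniform estimate above. On the other hand, writing $\|\cdot\|_H$ for the $L^2([0,T])$-norm, one has the pointwise (in $\omega$) bound $\|\phi_n'(X)D_\cdot X-1_{\{X\neq0\}}D_\cdot X\|_H=|\phi_n'(X)-1_{\{X\neq0\}}|\,\|D_\cdot X\|_H\to 0$, dominated by $2\|D_\cdot X\|_H\in L^1(\Omega)$; therefore $\phi_n'(X)\,D_tX\to 1_{\{X\neq0\}}D_tX$ in $L^1(\Omega;H)$ by dominated convergence. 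Since $D$ is a closed operator on $D^{1,1}(\Omega)$ (it is defined as the closure of its restriction to ${\cal S}(\Omega)$ for $p=1$), these two convergences force $D_tX=1_{\{X\neq0\}}D_tX$, i.e.\ $1_{\{X=0\}}D_tX=0$ a.s., which is the claim.

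The only slightly delicate point is the bookkeeping around Lemma \ref{chainrule}: one must check that at each step $\phi_n'(X)\,D_tX$ genuinely lies in $L^1(\Omega;H)$ and that the limit supplied by closedness is exactly $1_{\{X\neq0\}}D_tX$; both follow at once from $|\phi_n'|\le 1$ and $D_tX\in L^1(\Omega;H)$, so no real obstacle arises. (If instead one only had $X\in D^{1,1}_{\mathrm{loc}}(\Omega)$, one would first localize via Definition \ref{localtech} and run the argument on each $\Omega_n$, using that $A\cap\Omega_n$ is where the localizing random variable vanishes.)
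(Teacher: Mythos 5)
The paper does not prove this lemma at all --- it is imported verbatim as Proposition 3.8 of \cite{Huang00} --- so there is no in-paper argument to compare against. Your proof is correct and is essentially the standard textbook argument for the local property of $D$: reduce to $A=\{X=0\}$, regularize the identity by $C^1$ functions $\phi_n$ with $\phi_n'(0)=0$, $0\le\phi_n'\le 1$ and $\phi_n'\to 1_{\{x\neq 0\}}$ pointwise, apply the chain rule (the paper's Lemma \ref{chainrule} with $p=1$, whose integrability hypothesis you verify), and invoke closedness of $D$ on $D^{1,1}$ to identify the limit $1_{\{X\neq 0\}}D_tX$ with $D_tX$. The one ingredient you lean on implicitly is the closability of $D$ from $L^1(\Omega)$ into $L^1(\Omega;H)$, but the paper itself asserts closability of $D^k$ for all $p\ge 1$ when it defines $D^{k,p}$, so this is consistent with the framework you are working in; the reduction $A\subseteq\{X=0\}$ modulo null sets and the dominated-convergence step are both handled correctly.
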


\begin{lemma}
\label{locdelta}
  \textnormal{(Proposition 3.9,  \cite{Huang00})} 
Let $u\in {\cal L}^{1,2}$ and $A\in {\cal F}$, such that $u=0$ a.a. on $[0,T]\times A$. Then $ \delta u=0$ a.s. on $A$.
\end{lemma}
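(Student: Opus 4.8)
The plan is to derive the locality of the divergence operator from the duality relation $\mathbb{E}[F\,\delta u]=\mathbb{E}[\langle DF,u\rangle_H]$, valid for $F\in D^{1,2}(\Omega)$, after first replacing the abstract set $A$ by the canonical vanishing set of $u$. By Fubini's theorem the hypothesis that $u=0$ for a.a.\ $(t,\omega)\in[0,T]\times A$ is equivalent to $A\subset N$ modulo a $\mathbb{P}$-null set, where $N:=\{\omega:\|u(\omega)\|_H=0\}\in{\cal F}$; consequently $1_A\,\delta u=1_A\,1_N\,\delta u$, and it suffices to prove $1_N\,\delta u=0$ a.s. Since $u\in{\cal L}^{1,2}=D^{1,2}(\Omega;H)\subset\text{Dom}\,\delta$ (Lemma \ref{boundeddelta}) we have $\delta u\in L^2(\Omega)$, so it is enough to show $\mathbb{E}[F\,1_N\,\delta u]=0$ for every bounded smooth random variable $F\in{\cal S}(\Omega;\mathbb{R})$, these being dense in $L^2(\Omega)$.

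To produce the test functions, I would put $\eta:=\|u\|_H$, which is nonnegative, satisfies $N=\{\eta=0\}$, and belongs to $D^{1,2}(\Omega)$ with $\|D\eta\|_H\in L^2(\Omega)$ because $v\mapsto\|v\|_H$ is $1$-Lipschitz on $H$. Choose $\varphi_n\in C^\infty_p(\mathbb{R})$ with $0\le\varphi_n\le1$, $\varphi_n(0)=1$, $\varphi_n\equiv0$ on $[1/n,\infty)$, and $|\varphi_n'|\le Cn$ for a constant $C$. Then $\varphi_n(\eta)\to1_N$ pointwise, hence in $L^2(\Omega)$, and by the chain rule (Lemma \ref{chainrule}, whose integrability hypothesis holds since $\varphi_n,\varphi_n'$ are bounded) $\varphi_n(\eta)\in D^{1,2}(\Omega)$ with $D\varphi_n(\eta)=\varphi_n'(\eta)\,D\eta$. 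For bounded $F\in{\cal S}$ the product $F\varphi_n(\eta)$ again lies in $D^{1,2}(\Omega)$, so the duality relation yields
\[
\mathbb{E}\big[F\varphi_n(\eta)\,\delta u\big]=\mathbb{E}\big[\varphi_n(\eta)\,\langle DF,u\rangle_H\big]+\mathbb{E}\big[F\,\varphi_n'(\eta)\,\langle D\eta,u\rangle_H\big].
\]

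I would then pass to the limit $n\to\infty$. The left-hand side converges to $\mathbb{E}[F\,1_N\,\delta u]$ by dominated convergence, with dominating function $\|F\|_\infty|\delta u|\in L^1(\Omega)$. In the first term on the right, $\varphi_n(\eta)\langle DF,u\rangle_H\to1_N\langle DF,u\rangle_H=0$ pointwise (because $u=0$ on $N$) and is dominated by $\|DF\|_H\|u\|_H\in L^1(\Omega)$, so it vanishes. In the second term, $\varphi_n'(\eta)\ne0$ only on $\{0<\eta<1/n\}=\{0<\|u\|_H<1/n\}$, where $|\varphi_n'(\eta)\langle D\eta,u\rangle_H|\le Cn\,\|D\eta\|_H\,\|u\|_H\le C\,\|D\eta\|_H$; hence its integrand is bounded by $C\|F\|_\infty\|D\eta\|_H\in L^1(\Omega)$ and converges to $0$ pointwise, so it too vanishes. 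Therefore $\mathbb{E}[F\,1_N\,\delta u]=0$ for all bounded $F\in{\cal S}$, which gives $1_N\,\delta u=0$ a.s., and since $A\subset N$ a.s.\ this means $\delta u=0$ a.s.\ on $A$.

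The one point requiring care is the input $\eta=\|u\|_H\in D^{1,2}(\Omega)$ with $\|D\eta\|_H\in L^2(\Omega)$, since the chain rule of Lemma \ref{chainrule} is finite-dimensional while $\|\cdot\|_H$ acts on the infinite-dimensional space $H$. I would handle this by fixing an orthonormal basis $\{e_k\}$ of $H$, noting that each $\langle u,e_k\rangle\in D^{1,2}(\Omega)$, applying the finite-dimensional Lipschitz chain rule to the truncations $\eta_m:=\|(\langle u,e_1\rangle,\dots,\langle u,e_m\rangle)\|_{\mathbb{R}^m}$, and letting $m\to\infty$ using the closedness of $D$ and the uniform bound $\|D\eta_m\|_H\le\|Du\|_{H\otimes H}\in L^2(\Omega)$; extra integrability, if ever needed in the computations above, can be obtained by first localizing $u$ via Definition \ref{localtech}, all the identities being stable under the localizations of $D$ and $\delta$. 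The remaining estimates are routine dominated-convergence arguments.
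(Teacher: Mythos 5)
Your argument is correct, but note that the paper does not prove this lemma at all: it is quoted verbatim as Proposition 3.9 of \cite{Huang00}, so there is no internal proof to compare against. What you have written is essentially the classical duality argument for the local property of the divergence (it is Proposition 1.3.15 in \cite{Nualart06}): reduce $A$ to the canonical null set $N=\{\|u\|_H=0\}$ via Fubini, test $1_N\,\delta u$ against smooth bounded functionals, insert a cutoff $\varphi_n$ of the vanishing set, integrate by parts, and kill the commutator term by the cancellation $n\cdot\|u\|_H\le 1$ on $\{0<\|u\|_H<1/n\}$. All the limit passages are justified as you state them. The one place where you deviate from the textbook argument is the choice of localizing variable: you use $\eta=\|u\|_H$, which is only Lipschitz at the origin, so you are forced to invoke a Lipschitz functional calculus in $D^{1,2}$ (via finite-dimensional truncations, weak compactness of bounded sequences of derivatives, and closedness of $D$) --- a correct but external ingredient, since the paper's Lemma \ref{chainrule} only covers $C^1$ functions. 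The standard route localizes with $\|u\|_H^2=\int_0^T u_t^2\,\mathrm{d}t$ instead, which is a genuinely $C^1$ (indeed quadratic) functional of $u$, so that $D\|u\|_H^2=2\int_0^T u_s D u_s\,\mathrm{d}s$ follows from the paper's own chain rule and the cancellation $|\varphi_n'(\|u\|_H^2)|\,|\langle D\|u\|_H^2,u\rangle_H|\le Cn\cdot 2\|u\|_H^2\,\|Du\|_{H\otimes H}\le 2C\|Du\|_{H\otimes H}$ works identically; adopting that variant would make your proof self-contained relative to the paper's toolkit. Either way the conclusion stands.
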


 When $u\in L_a^2(\Omega\times [0,T])$, its It\^o integral $\int_0^tu_s\mathrm{d}W_s$ is a continuous process (see \cite{Karatzas91}). Similar result in the Skorohod integral can be given by the following lemma.

 \begin{lemma}\label{continuous}
 \textnormal{(Proposition 3.2.2,  \cite{Nualart06})} 
Let $u\in {\cal L}^{1,p}$, $p>2$. Then the Skorohod integral $\int_0^tu_s\mathrm{d}W_s$ is continuous (in the sense of modification).
 \end{lemma}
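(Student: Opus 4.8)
The plan is to combine an $L^p$ bound for Skorohod integrals of indicator–truncated processes with Kolmogorov's continuity criterion, using a deterministic time change to absorb the merely $L^1$ (rather than bounded) intensity of the increments.

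First I would record the increment identity. For $0\le s<t\le T$, the truncated processes $u\,1_{[0,t]}$ and $u\,1_{(s,t]}$ again lie in ${\cal L}^{1,p}\subset{\cal L}^{1,2}=D^{1,2}(\Omega;H)\subset\text{Dom }\delta$, so linearity of $\delta$ gives $\int_0^t u_r\,\mathrm{d}W_r-\int_0^s u_r\,\mathrm{d}W_r=\delta(u\,1_{(s,t]})$. Now I invoke the $L^p$ analogue, for $p>1$, of Lemma \ref{boundeddelta} (a consequence of Meyer's inequalities; see \cite{Nualart06}): there is $c_p>0$ with $\mathbb{E}|\delta(v)|^p\le c_p\|v\|_{D^{1,p}(\Omega;H)}^p$ for all $v\in D^{1,p}(\Omega;H)$. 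Applying it to $v=u\,1_{(s,t]}$ and using Hölder's inequality in the time variable (exponents $p/2$ and $p/(p-2)$, legitimate since $p>2$) to bound $\big(\int_s^t u_r^2\,\mathrm{d}r\big)^{p/2}\le(t-s)^{(p-2)/2}\int_s^t|u_r|^p\,\mathrm{d}r$ and similarly for the term involving $\|D_\cdot u_r\|_H$, I obtain
\begin{equation*}
\mathbb{E}\left|\int_s^t u_r\,\mathrm{d}W_r\right|^p\le c_p\,(t-s)^{\frac{p-2}{2}}\int_s^t g(r)\,\mathrm{d}r,\qquad g(r):=\|u_r\|_{D^{1,p}(\Omega)}^p\in L^1([0,T]),
\end{equation*}
the integrability of $g$ being exactly the statement $u\in{\cal L}^{1,p}=L^p([0,T];D^{1,p}(\Omega))$.

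The second step removes the $L^1$ obstruction. Set $G(t)=\int_0^t g(r)\,\mathrm{d}r$ and $F(t)=t+G(t)$, a continuous strictly increasing bijection of $[0,T]$ onto $[0,L]$ with $L=T+\|g\|_{L^1}$, and let $\tau:[0,L]\to[0,T]$ be its inverse. Put $Y_a:=\int_0^{\tau(a)}u_r\,\mathrm{d}W_r$. For $0\le a<b\le L$, writing $s=\tau(a)$, $t=\tau(b)$, the defining relation $(t-s)+\int_s^t g(r)\,\mathrm{d}r=b-a$ forces $t-s\le b-a$ and $\int_s^t g(r)\,\mathrm{d}r\le b-a$, so the estimate above yields $\mathbb{E}|Y_b-Y_a|^p\le c_p(b-a)^{p/2}$ with $p/2>1$. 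Kolmogorov's continuity theorem then provides a continuous modification $\tilde Y$ of $Y$, whence $\tilde M_t:=\tilde Y_{F(t)}$ is a continuous process satisfying $\tilde M_t=Y_{F(t)}=\int_0^t u_r\,\mathrm{d}W_r$ a.s. for every $t$; this is the desired modification. (Alternatively one can feed the estimate above directly into the Garsia–Rodemich–Rumsey inequality.)

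I expect the main obstacle to be precisely this last point: the increment intensity $g$ is only integrable, not bounded, so Kolmogorov's criterion does not apply verbatim to $t\mapsto\int_0^t u_r\,\mathrm{d}W_r$ and one must route through the time change $F$ (or Garsia–Rodemich–Rumsey). The only ingredient beyond the excerpt is the $L^p$-boundedness of $\delta$ on $D^{1,p}(\Omega;H)$ for $p>1$, i.e. Meyer's inequality, which is standard; the Hölder estimates in the time variable and the inclusion ${\cal L}^{1,p}\subset D^{1,p}(\Omega;H)$ recalled in Section~\ref{MVcalculus} are routine. Note also that $p>2$ is used twice — once for the Hölder exponent $p/(p-2)$ and once to make $p/2>1$ — so the hypothesis cannot be weakened to $p=2$ by this argument.
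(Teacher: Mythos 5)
Your argument is correct, and it is essentially the proof of the result the paper is citing (Nualart, Proposition 3.2.2): the $L^p$ continuity of $\delta$ on $D^{1,p}(\Omega;H)$ via Meyer's inequalities, the H\"older estimate in time giving $\mathbb{E}\left|\int_s^t u_r\,\mathrm{d}W_r\right|^p\le c_p (t-s)^{(p-2)/2}\int_s^t g(r)\,\mathrm{d}r$ with $g\in L^1$, and then a passage to a continuous modification. The paper itself supplies no proof, so there is nothing further to compare; your deterministic time change is a clean substitute for the Garsia--Rodemich--Rumsey step used in the cited source, and you correctly identify that $p>2$ is needed in both places.
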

  
Like semi-martingales (or It\^{o} processes) in It\^{o} theory, here we give a similar characterization in the Skorohod integral.

 \begin{lemma}\label{sko2}
 \textnormal{(Proposition 3.1.1,  \cite{Nualart06})} 
Consider a process of the form $X_t=X_0+\int_0^tu_s\mathrm{d}W_s+\int_0^tv_s\mathrm{d}s$, where $X_0\in D^{1,2}(\Omega)$, $u\in {\cal L}^{2,2}$, and $v\in {\cal L}^{1,2}$. Then we have $X \in {\cal L}^{1,2,2}$, and $(D^-X)_s=D_sX_0+\int_0^s D_su_r\mathrm{d}W_r+\int_0^sD_sv_r\mathrm{d}r$ (resp. $(D^+X)_s=u_s+D_sX_0+\int_0^sD_su_r\mathrm{d}W_r+\int_0^s D_sv_r\mathrm{d}r$).
 \end{lemma}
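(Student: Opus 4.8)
The plan is to prove the statement in two stages. First I would compute the Malliavin derivative $D_sX_t$ for each fixed $t$, obtaining an explicit formula; then I would let $t$ tend to $s$ from below and from above to read off the one-sided traces $D^-X$ and $D^+X$, checking that the $L^2$-convergence required in Definition \ref{ddd} (with $q=2$) holds.

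For the first stage, fix $t$ and write $X_t=X_0+\delta(u1_{[0,t]})+\int_0^tv_r\,\mathrm{d}r$. Since $u\in{\cal L}^{2,2}=L^2([0,T];D^{2,2}(\Omega))\subset D^{2,2}(\Omega;H)$, the truncated process $u1_{[0,t]}$ also lies in $D^{2,2}(\Omega;H)$, so $D_su$ is itself Skorohod integrable for a.e.\ $s$ by Lemma \ref{boundeddelta}. The standard commutation relation $D_s\delta(w)=w_s+\delta(D_sw)$ then applies to $w=u1_{[0,t]}$ and gives $\delta(u1_{[0,t]})\in D^{1,2}(\Omega)$ with $D_s\delta(u1_{[0,t]})=u_s1_{[0,t]}(s)+\int_0^tD_su_r\,\mathrm{d}W_r$. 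For the drift term, $v\in{\cal L}^{1,2}$ and the closedness of $D$ let me interchange $D_s$ with $\int_0^t\cdot\,\mathrm{d}r$, so that $D_s\int_0^tv_r\,\mathrm{d}r=\int_0^tD_sv_r\,\mathrm{d}r$. Adding $D_sX_0$ yields
\[
D_sX_t=D_sX_0+u_s1_{[0,t]}(s)+\int_0^tD_su_r\,\mathrm{d}W_r+\int_0^tD_sv_r\,\mathrm{d}r .
\]
Bounding the three integral contributions in $L^2(\Omega)$ by $\|u\|_{{\cal L}^{2,2}}$ (via Lemma \ref{boundeddelta} for the Skorohod term), $\|v\|_{{\cal L}^{1,2}}$ (via Jensen's inequality for the Lebesgue term) and $\|X_0\|_{D^{1,2}(\Omega)}$, then integrating in $t$, shows $\int_0^T\|X_t\|^2_{D^{1,2}(\Omega)}\,\mathrm{d}t<\infty$, i.e.\ $X\in{\cal L}^{1,2}$.

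For the second stage, observe that for $t<s$ the indicator $1_{[0,t]}(s)$ vanishes, so $D_sX_t=D_sX_0+\int_0^tD_su_r\,\mathrm{d}W_r+\int_0^tD_sv_r\,\mathrm{d}r$; this makes $(D^-X)_s:=D_sX_0+\int_0^sD_su_r\,\mathrm{d}W_r+\int_0^sD_sv_r\,\mathrm{d}r$ the natural candidate for the left trace, and since $1_{[0,t]}(s)=1$ when $t>s$, the candidate for the right trace is $(D^+X)_s:=u_s+(D^-X)_s$; both belong to $L^2(\Omega\times[0,T])$ by the bounds of the first stage. It remains to verify $\lim_{\varepsilon\to0^+}\int_0^T\sup_{(s-\varepsilon)\vee0\le t<s}\mathbb{E}(|D_sX_t-(D^-X)_s|^2)\,\mathrm{d}s=0$ and its analogue for $D^+X$. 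For $t<s$ one has $D_sX_t-(D^-X)_s=-\int_t^sD_su_r\,\mathrm{d}W_r-\int_t^sD_sv_r\,\mathrm{d}r$; the Cauchy--Schwarz inequality bounds the Lebesgue part by $\varepsilon\int_{(s-\varepsilon)\vee0}^s\mathbb{E}|D_sv_r|^2\,\mathrm{d}r$, while Lemma \ref{boundeddelta} applied to $\delta\bigl((D_su)1_{(t,s]}\bigr)$ bounds the Skorohod part by $\|(D_su)1_{(t,s]}\|^2_{D^{1,2}(\Omega;H)}$. Both bounds are independent of $t\in[(s-\varepsilon)\vee0,s)$, are dominated uniformly in $\varepsilon\le1$ by integrable functions of $s$ arising from $\|v\|_{{\cal L}^{1,2}}$ and $\|u\|_{{\cal L}^{2,2}}$, and tend to $0$ pointwise in $s$ as $\varepsilon\to0^+$ by absolute continuity of the integral; dominated convergence then gives the required limit. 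The same argument with the window $(s,(s+\varepsilon)\wedge T]$ handles $D^+X$, so $X\in{\cal L}^{1,2,2}$ with the asserted formulas for $D^-X$ and $D^+X$.

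I expect the only genuinely delicate points to be the differentiation of the Skorohod integral $\delta(u1_{[0,t]})$ --- this is precisely what forces the hypothesis $u\in{\cal L}^{2,2}$ rather than merely $u\in{\cal L}^{1,2}$, so that $D_su$ stays Skorohod integrable --- together with the uniform-in-$t$ $L^2$ control of the tail Skorohod integrals $\delta((D_su)1_{(t,s]})$ needed to run dominated convergence in the second stage; the rest is routine bookkeeping with the isometry-type estimate of Lemma \ref{boundeddelta}.
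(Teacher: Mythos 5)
The paper does not prove this lemma --- it is quoted verbatim as Proposition~3.1.1 of \cite{Nualart06} --- so there is no in-paper argument to compare against; your proof is correct and follows the standard route of that reference: differentiate $\delta(u1_{[0,t]})$ via the commutation relation $D_s\delta(w)=w_s+\delta(D_sw)$ (legitimate because $u\in{\cal L}^{2,2}$ keeps $D_su$ Skorohod integrable), read off the candidates for $D^{\mp}X$ from the indicator $1_{[0,t]}(s)$, and verify the trace limits of Definition~\ref{ddd} with $q=2$ by the $L^2$-boundedness of $\delta$ on $D^{1,2}(\Omega;H)$ plus absolute continuity and dominated convergence. Your identification of the two delicate points (why ${\cal L}^{2,2}$ rather than ${\cal L}^{1,2}$ is needed, and the uniform-in-$t$ control of the tail integrals $\delta((D_su)1_{(t,s]})$) is exactly right.
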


We will give an approximation property of $u\in {\cal L}^{1,2}$, which will be used in theory of the forward integral (Proposition \ref{multipf1}). Before that, an important inequality in Harmonic analysis is required.

  \begin{lemma}\label{hlconv_1}
   \textnormal{(Hardy-Littlewood,  Theorem 2.5,  \cite{Duoandikoetxea01})} 
If $f $ is locally integrable on $\mathbb {R}^n$, we define its Hardy-Littlewood maximal function $Mf$ by\\ $Mf(x):=\sup_{r>0}\frac{1}{{\mathrm m}(B_x(r))}\int_{B_x(r)}|f(y)|\mathrm{d}y,\quad x\in {\mathbb R}^n$, where $B_x(r)$ is a Euclidean ball of radius $r$ centered at $x$, and ${\mathrm m}$ is the Lebesgue measure on ${\mathbb R}^n$. Then for all $p\in (1,\infty]$, we have
    \begin{equation*} 
\begin{aligned}
\| Mf\|_{L^p(\mathbb {R}^n)}\le C_p\| f\|_{L^p(\mathbb {R}^n)},
\end{aligned}
\end{equation*}
where constant $C_p$ depends only on $p$. Moreover, for a locally integrable function $f$, we have
    \begin{equation*} 
\begin{aligned}
\lim_{r\rightarrow 0^+}\frac{1}{ {\mathrm m}(B_x(r))}\int_{B_r(x)}f(y)\mathrm{d}y=f(x)
\end{aligned}
\end{equation*}
for a.e. $x$. The above conclusions still hold when we take some types of cubes containing $x$ instead of $B_x(r)$.
 \end{lemma}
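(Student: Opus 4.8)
The plan is to treat the two assertions separately: first the strong $(p,p)$ bound for the maximal operator $M$ on the range $p\in(1,\infty]$, and then the Lebesgue differentiation statement, deducing the latter from the former by approximation with continuous functions. The cube version is handled at the very end via the pointwise comparability of ball- and cube-averages.

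For the maximal inequality I would first dispose of the endpoint $p=\infty$: since each average $\frac{1}{m(B_x(r))}\int_{B_x(r)}|f|\le \|f\|_{L^\infty}$, taking the supremum over $r$ gives $Mf(x)\le\|f\|_{L^\infty}$ for a.e.\ $x$, hence $\|Mf\|_{L^\infty}\le\|f\|_{L^\infty}$ with $C_\infty=1$. The substantive content is the \emph{weak} $(1,1)$ estimate $m(\{Mf>\lambda\})\le C_n\lambda^{-1}\|f\|_{L^1}$ for all $\lambda>0$. To prove it I would fix a compact $K\subset\{Mf>\lambda\}$; each $x\in K$ admits a ball $B_x$ with $\int_{B_x}|f|>\lambda\,m(B_x)$, and a finite (Vitali-type) covering lemma extracts a disjoint subfamily $B_{x_1},\dots,B_{x_N}$ whose $3$-dilates cover $K$. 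Then $m(K)\le 3^n\sum_j m(B_{x_j})\le 3^n\lambda^{-1}\sum_j\int_{B_{x_j}}|f|\le 3^n\lambda^{-1}\|f\|_{L^1}$, and taking the supremum over compact $K\subset\{Mf>\lambda\}$ gives the claim with $C_n=3^n$. The passage from the weak $(1,1)$ and strong $(\infty,\infty)$ endpoints to the strong $(p,p)$ bound on $1<p<\infty$ is then the Marcinkiewicz interpolation theorem applied to the sublinear operator $M$, yielding a constant $C_p$ depending only on $p$ and $n$.

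For the differentiation statement, write $A_rf(x)=\frac{1}{m(B_x(r))}\int_{B_x(r)}f$. Since the conclusion is local and a.e., I may assume $f\in L^1(\mathbb{R}^n)$ after multiplying by the indicator of a large ball. The key object is the oscillation $\Omega f(x):=\limsup_{r\to0^+}|A_rf(x)-f(x)|$. For $g\in C_c(\mathbb{R}^n)$ uniform continuity gives $A_rg\to g$ everywhere, so $\Omega g\equiv 0$; by sublinearity $\Omega f\le \Omega(f-g)+\Omega g=\Omega(f-g)\le M(f-g)+|f-g|$. Chebyshev's inequality together with the weak $(1,1)$ bound then yields $m(\{\Omega f>\alpha\})\le m(\{M(f-g)>\alpha/2\})+m(\{|f-g|>\alpha/2\})\le (C_n+2)\,\tfrac{2}{\alpha}\|f-g\|_{L^1}$. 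As $C_c$ is dense in $L^1$, choosing $g$ with $\|f-g\|_{L^1}$ arbitrarily small forces $m(\{\Omega f>\alpha\})=0$ for every $\alpha>0$, hence $\Omega f=0$ a.e., which is exactly the stated limit. Finally, for the cube variant I would note that a cube $Q\ni x$ of side $\ell$ satisfies $Q\subset B_x(\sqrt n\,\ell)$ while $B_x(r)$ lies in the concentric cube of side $2r$; consequently the cube- and ball-averages, and hence the corresponding maximal operators, are pointwise comparable up to a purely dimensional factor, so both the maximal bound and the differentiation limit transfer with only the constant $C_p$ changing.

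The main obstacle I expect is the weak $(1,1)$ estimate, and specifically the covering step: one must extract from the (generally uncountable) family of balls $\{B_x\}_{x\in K}$ a disjoint subfamily whose controlled dilates still cover $K$, which is the geometric heart of the argument. Once this is in place, both the Marcinkiewicz interpolation and the density/oscillation argument are routine.
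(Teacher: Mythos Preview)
The paper does not prove this lemma at all: it is stated as a citation of Theorem~2.5 in Duoandikoetxea's text and used as a black box in the proof of Lemma~\ref{conv_1}. Your argument is the standard one from that reference (Vitali covering for the weak $(1,1)$ bound, Marcinkiewicz interpolation for $1<p<\infty$, and the density/oscillation argument for Lebesgue differentiation), and it is correct. One small remark: you correctly note that the constant depends on the dimension $n$ as well as $p$; the paper's phrasing ``depends only on $p$'' is slightly imprecise, but since the lemma is applied only with $n=1$ this is harmless.
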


 \begin{lemma}\label{conv_1}
If $u\in L^2(\Omega\times [0,T])$, then $\lim_{\varepsilon\rightarrow 0^+}\int_{(r-\varepsilon)\vee 0}^r\frac{u_s}{\varepsilon}\mathrm{d}s=u_r$ in $L^2(\Omega\times [0,T])$. Furthermore, the convergence also holds in ${\cal L}^{1,2}$ whenever $u\in {\cal L}^{1,2}$.
 \end{lemma}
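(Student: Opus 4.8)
The plan is to prove the two assertions in turn, using the Hardy--Littlewood maximal theorem (Lemma \ref{hlconv_1}) as the decisive tool for passing the limit inside the relevant integrals. For the first assertion, fix $u\in L^2(\Omega\times[0,T])$. For almost every $\omega$, the function $s\mapsto u_s(\omega)$ is integrable on $[0,T]$ (after extending it by $0$ outside $[0,T]$, it is locally integrable on $\mathbb{R}$), so the Lebesgue differentiation statement in Lemma \ref{hlconv_1} applied to the left-hand intervals $[(r-\varepsilon)\vee 0,r]$ gives $\frac{1}{\varepsilon}\int_{(r-\varepsilon)\vee 0}^r u_s(\omega)\,\mathrm{d}s\to u_r(\omega)$ for a.e.\ $r$, hence for $\mathrm{m}\times\mathbb{P}$-a.e.\ $(r,\omega)$. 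To upgrade this pointwise convergence to convergence in $L^2(\Omega\times[0,T])$, I would dominate: setting $A_\varepsilon u(r,\omega):=\frac{1}{\varepsilon}\int_{(r-\varepsilon)\vee 0}^r u_s(\omega)\,\mathrm{d}s$, one has $|A_\varepsilon u(r,\omega)|\le 2\,(Mu(\cdot,\omega))(r)$ where $M$ is the one-dimensional Hardy--Littlewood maximal operator applied to $s\mapsto |u_s(\omega)|\mathbf{1}_{[0,T]}(s)$ (the factor $2$ absorbs the truncation at $0$). By the maximal inequality with $p=2$, $\mathbb{E}\int_0^T |Mu(\cdot,\omega)(r)|^2\,\mathrm{d}r\le C_2\,\mathbb{E}\int_0^T|u_r|^2\,\mathrm{d}r<\infty$, so $\{|A_\varepsilon u|^2\}_{\varepsilon}$ is dominated by the fixed integrable function $4 C_2$-worth of $|Mu|^2$; the dominated convergence theorem then yields $A_\varepsilon u\to u$ in $L^2(\Omega\times[0,T])$.

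For the second assertion, suppose $u\in{\cal L}^{1,2}\cong L^2([0,T];D^{1,2}(\Omega))$. Since $D_t$ commutes with the Lebesgue integral in $s$ (the averaging is a bounded linear operation and $D_t$ is closed), we have $D_t\big(A_\varepsilon u\big)_r=A_\varepsilon(D_t u)_r=\frac{1}{\varepsilon}\int_{(r-\varepsilon)\vee 0}^r D_t u_s\,\mathrm{d}s$. Viewing $D_t u$ as an element of $L^2(\Omega\times[0,T]^2)$ — i.e., as a random field in the two ``time'' variables $(r,t)$ — the very same maximal-function domination argument as above, now applied in the variable $r$ uniformly in $t$ (after integrating over $t$), shows that $A_\varepsilon(Du)\to Du$ in $L^2(\Omega\times[0,T]^2)$. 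Combining this with the first assertion gives $A_\varepsilon u\to u$ in the graph norm $\|\cdot\|_{D^{1,2}(\Omega;H)}=\|\cdot\|_{{\cal L}^{1,2}}$, which is what is claimed.

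The main obstacle I anticipate is the bookkeeping around the truncation at $0$ and the careful identification of the dominating function: one must check that $A_\varepsilon u(r,\cdot)$ is genuinely controlled by a single maximal function (not an $\varepsilon$-dependent one) so that dominated convergence applies, and one must be slightly careful that the exceptional null set in the Lebesgue differentiation step can be chosen uniformly — this is handled by Fubini, since the a.e.\ statement in $(r,\omega)$ suffices for $L^2(\Omega\times[0,T])$ convergence once domination is in place. The passage to the Malliavin derivative is then essentially a repetition of the scalar argument with $H$-valued (indeed $L^2([0,T])$-valued) integrands, using that $D_t$ is closed to justify interchanging $D_t$ with the averaging operator; no genuinely new difficulty arises there beyond keeping track of which time variable the maximal inequality acts on.
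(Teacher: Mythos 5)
Your proposal is correct and follows essentially the same route as the paper's proof: pointwise a.e.\ convergence via the Lebesgue differentiation statement in Lemma \ref{hlconv_1}, domination of the one-sided averages by the Hardy--Littlewood maximal function, the $L^p$ maximal inequality with $p=2$ to get an integrable dominating function, and then dominated convergence. The paper explicitly proves only the $L^2(\Omega\times[0,T])$ part and declares the ${\cal L}^{1,2}$ part analogous, whereas you also sketch that step (commuting $D_t$ with the averaging and repeating the maximal-function argument in the extra time variable), which is a harmless and correct elaboration rather than a different approach.
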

  \begin{proof}
  For convenience, we only prove the convergence in $L^2(\Omega\times [0,T])$. By Lemma \ref{hlconv_1}, we have $\int _{(r-\varepsilon)\vee 0}^{r}\frac{u_s}{\varepsilon} \mathrm{d}s\rightarrow u_r  $ for a.a. $(\omega,r)\in\Omega\times[0,T]$. Since 
 \begin{equation*}  
\begin{aligned}
\left| \int _{(r-\varepsilon)\vee 0}^{r}\frac{u_s }{\varepsilon} \mathrm{d}s\right|\le 
 \sup_{0\le \varepsilon\le T}\int_{(r-\varepsilon)\vee 0}^{r}\frac{\left|u_s\right|}{\varepsilon}\mathrm{d}s,
  \end{aligned}
\end{equation*}
and by Lemma \ref{hlconv_1}, 
 \begin{equation*}  
\begin{aligned}
\mathbb{E}\left\| \sup_{0\le \varepsilon\le T}\int_{(r-\varepsilon)\vee 0}^{r}\frac{\left|u_s \right|}{\varepsilon}\mathrm{d}s \right\|^2_{L^2([0,T])}\le C\mathbb{E}\left\| u_r   \right\|^2_{L^2([0,T])}<\infty,
  \end{aligned}
\end{equation*}
we have by the dominated convergence theorem, $\lim_{\varepsilon\rightarrow 0^+}\int_{(r-\varepsilon)\vee 0}^r\frac{u_s}{\varepsilon}\mathrm{d}s=u_r$ in $L^2(\Omega\times [0,T])$.
   \end{proof}
 
 The following two lemmas give the Multiplication formulae of $D_t$ and $D^-$, which will be used in theory of the forward integral (Proposition \ref{multipf2}).
 
  \begin{lemma}\label{utimessigma1}
Let $u,\sigma\in {\cal L}^{1,2}$. Also assume that $\sigma$ and $D_s\sigma_t$ are bounded. Then $u\sigma\in {\cal L}^{1,2}$ and $D_s(u_t\sigma_t)=\sigma_tD_su_t+u_tD_s\sigma_t$.
 \end{lemma}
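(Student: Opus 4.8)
The plan is to deduce the product formula from the pointwise-in-$t$ chain rule (Lemma~\ref{chainrule}) applied to the multiplication map $(x_1,x_2)\mapsto x_1x_2$, and then to lift the resulting fixed-time identities to the $H$-valued random variable $u\sigma$ by means of the isomorphism ${\cal L}^{1,2}\cong L^2([0,T];D^{1,2}(\Omega))$.

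First I would fix $t\in[0,T]$ such that $u_t\in D^{1,2}(\Omega)$ and $\sigma_t\in D^{1,2}(\Omega)$; this holds for a.e.\ $t$ since $u,\sigma\in{\cal L}^{1,2}$. Let $M$ be an essential bound for $\sigma$ and $N$ an essential bound for $D_s\sigma_t$. Applying Lemma~\ref{chainrule} with $n=2$, $f(x_1,x_2)=x_1x_2\in C^\infty(\mathbb R^2)$ and $\bm X=(u_t,\sigma_t)$ requires the integrability condition $\|u_t\sigma_t\|_{L^2(\Omega)}+\|\sigma_tD_\cdot u_t+u_tD_\cdot\sigma_t\|_{L^2(\Omega;H)}<\infty$, which is exactly where the boundedness hypotheses are used: $|u_t\sigma_t|\le M|u_t|$, $|\sigma_tD_su_t|\le M|D_su_t|$ and $|u_tD_s\sigma_t|\le N|u_t|$, while $u_t\in D^{1,2}(\Omega)$. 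Lemma~\ref{chainrule} then gives $u_t\sigma_t\in D^{1,2}(\Omega)$ and $D_s(u_t\sigma_t)=\sigma_tD_su_t+u_tD_s\sigma_t$ for a.e.\ $t$.

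It remains to verify that $t\mapsto u_t\sigma_t$ lies in $L^2([0,T];D^{1,2}(\Omega))$, which by the isomorphism both yields $u\sigma\in{\cal L}^{1,2}$ and identifies the random field $(s,t)\mapsto\sigma_tD_su_t+u_tD_s\sigma_t$ as the Malliavin derivative $D(u\sigma)$. Measurability of $t\mapsto u_t\sigma_t$ into $D^{1,2}(\Omega)$ follows from joint measurability of $u,\sigma$ and of the explicit formula for the derivative, and square-integrability of the graph norm reduces to
\[
\int_0^T\mathbb E(u_t\sigma_t)^2\,\mathrm dt\le M^2\int_0^T\mathbb E u_t^2\,\mathrm dt<\infty
\]
together with
\[
\int_0^T\!\!\int_0^T\mathbb E\big(\sigma_tD_su_t+u_tD_s\sigma_t\big)^2\,\mathrm ds\,\mathrm dt\le 2M^2\!\int_0^T\!\!\int_0^T\mathbb E(D_su_t)^2\,\mathrm ds\,\mathrm dt+2TN^2\!\int_0^T\mathbb E u_t^2\,\mathrm dt<\infty,
\]
both finite because $u\in{\cal L}^{1,2}$. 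The proof is otherwise routine; the only point demanding attention is this passage from the fixed-time identities to the statement about the $H$-valued variable $u\sigma$, i.e.\ checking the measurability of $t\mapsto u_t\sigma_t$ into $D^{1,2}(\Omega)$ and that the pointwise derivatives assemble into a genuine element of $L^2(\Omega;H\otimes H)$.
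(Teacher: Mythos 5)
Your proposal is correct and follows essentially the same route as the paper: both apply the chain rule (Lemma~\ref{chainrule}) to $f(x_1,x_2)=x_1x_2$ at each fixed $t$, using the boundedness of $\sigma$ and $D_s\sigma_t$ to verify the integrability hypothesis, and then integrate the resulting graph-norm bounds over $t$ to conclude $u\sigma\in{\cal L}^{1,2}$. Your version merely spells out the measurability and the final $L^2([0,T];D^{1,2}(\Omega))$ estimates more explicitly than the paper does.
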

 \begin{proof}
 For each $t\in [0,T]$, $\mathbb{E}|u_t\sigma_t|^2\le C\mathbb {E}|u_t|^2$, $\mathbb{E}\int_0^T|\sigma_tD_s u_t|^2\mathrm{d}s\le C\mathbb{E}\int_0^T| D_s u_t|^2\mathrm{d}s$, and $\mathbb{E}\int_0^T|u_tD_s \sigma_t|^2\mathrm{d}s\le C \mathbb{E}\int_0^T|u_t |^2\mathrm{d}s$. Thus $u_t\sigma_t\in D^{1,2}(\Omega)$ and $D_s(u_t\sigma_t)=\sigma_tD_su_t+u_tD_s\sigma_t$ by Lemma \ref{chainrule}. Since all the above norms are controlled, we deduce that $u\sigma\in {\cal L}^{1,2}$. 
 \end{proof}
 
  \begin{lemma}\label{utimessigma2}
Let $u,\sigma$ be processes in $ {\cal L}^{1,2,2-}$ which are $L^2$-bounded and left-continuous in the norm $L^2(\Omega)$. Also assume that $\sigma$ and $D_s\sigma_t$ are bounded. Then $u\sigma\in {\cal L}^{1,2,1-}$ and $(D^-(u\sigma))_s=\sigma_s(D^-u)_s+u_s(D^-\sigma)_s$.
 \end{lemma}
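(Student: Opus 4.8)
The plan is to reduce the statement to the already-established multiplication formula for $D_t$ (Lemma \ref{utimessigma1}) and then pass to the limit $\varepsilon\to 0^+$ inside the defining condition of ${\cal L}^{1,2,1-}$ (Definition \ref{ddd}). First I would note that by Lemma \ref{utimessigma1} we already have $u\sigma\in{\cal L}^{1,2}$ and the pointwise identity $D_s(u_t\sigma_t)=\sigma_tD_su_t+u_tD_s\sigma_t$ for a.e. $(s,t)$; so the only thing to prove is that the candidate process $\zeta_s:=\sigma_s(D^-u)_s+u_s(D^-\sigma)_s$ belongs to $L^1(\Omega\times[0,T])$ and satisfies
\begin{equation*}
\lim_{\varepsilon\rightarrow 0^+}\int_0^T\sup_{(s-\varepsilon)\vee 0\le t<s}\mathbb{E}\big(|D_s(u_t\sigma_t)-\zeta_s|\big)\,\mathrm{d}s=0.
\end{equation*}
Integrability of $\zeta$ is immediate: $\sigma$ is bounded so $\sigma(D^-u)\in L^1$ since $D^-u\in L^2(\Omega\times[0,T])\subset L^1$; and $u(D^-\sigma)\in L^1$ by Cauchy--Schwarz, using $u\in{\cal L}^{1,2,2-}$ is $L^2$-bounded (hence $u\in L^2(\Omega\times[0,T])$) and $D^-\sigma\in L^2(\Omega\times[0,T])$.

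Next I would split the difference inside the $\sup$ using the pointwise chain rule:
\begin{equation*}
D_s(u_t\sigma_t)-\zeta_s=\sigma_t\big(D_su_t-(D^-u)_s\big)+\big(\sigma_t-\sigma_s\big)(D^-u)_s+u_t\big(D_s\sigma_t-(D^-\sigma)_s\big)+\big(u_t-u_s\big)(D^-\sigma)_s,
\end{equation*}
valid for $(s-\varepsilon)\vee 0\le t<s$. Taking $\mathbb{E}|\cdot|$, the $\sup$ over $t$ in this range, and then $\int_0^T\mathrm{d}s$, I would bound the four terms separately. Term one: $\sigma$ bounded by $C$, so it is $\le C\int_0^T\sup_{(s-\varepsilon)\vee 0\le t<s}\mathbb{E}|D_su_t-(D^-u)_s|\,\mathrm{d}s\to 0$ because $u\in{\cal L}^{1,2,1-}$ (the $q=1$ condition of Definition \ref{ddd}). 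Term three is analogous: since $\sigma\in{\cal L}^{1,2,2-}$ gives the $q=2$ condition, and $u$ is $L^2$-bounded uniformly, Cauchy--Schwarz turns it into $\|u\|_{L^2}$ times the square root of $\int_0^T\sup_t\mathbb{E}|D_s\sigma_t-(D^-\sigma)_s|^2\,\mathrm{d}s\to 0$. For terms two and four I would use the $L^2(\Omega)$-left-continuity of $\sigma$ and of $u$: $\sup_{(s-\varepsilon)\vee 0\le t<s}\|\sigma_t-\sigma_s\|_{L^2(\Omega)}\to 0$ and $\sup_{(s-\varepsilon)\vee 0\le t<s}\|u_t-u_s\|_{L^2(\Omega)}\to 0$ pointwise in $s$ as $\varepsilon\to0^+$; pairing with $\|(D^-u)_s\|_{L^2(\Omega)}$, resp. $\|(D^-\sigma)_s\|_{L^2(\Omega)}$, via Cauchy--Schwarz and applying dominated convergence in $s$ (the dominating function being $2\sup_t\|\sigma_t\|_{L^2}\|(D^-u)_s\|_{L^2}$, resp. $2\sup_t\|u_t\|_{L^2}\|(D^-\sigma)_s\|_{L^2}$, both in $L^1([0,T])$) gives that these contributions vanish as well.

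The main obstacle I expect is the bookkeeping in terms two and four: one has to be careful that the left-continuity hypotheses are in the $L^2(\Omega)$ norm (not pointwise a.s.), that the supremum over the shrinking interval is genuinely controlled (here uniform $L^2$-boundedness of $u$ and boundedness of $\sigma$ give the needed domination), and that dominated convergence is applied in the $s$-variable with an $\varepsilon$-independent integrable majorant. Once the four estimates are in place, the displayed limit holds, so by Definition \ref{ddd} we conclude $u\sigma\in{\cal L}^{1,2,1-}$ with $(D^-(u\sigma))_s=\sigma_s(D^-u)_s+u_s(D^-\sigma)_s$, which is the assertion.
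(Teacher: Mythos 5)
Your proposal is correct and follows essentially the same route as the paper: reduce to the chain rule of Lemma \ref{utimessigma1}, split $D_s(u_t\sigma_t)-\sigma_s(D^-u)_s-u_s(D^-\sigma)_s$ into the same four add-and-subtract terms (the paper treats them two at a time by symmetry), and kill the first and third via the ${\cal L}^{1,2,2-}$ convergence and the second and fourth via $L^2(\Omega)$-left-continuity plus $L^2$-boundedness. The only cosmetic differences are that the paper bounds the first term with Cauchy--Schwarz and the $L^2$-boundedness of $\sigma$ where you use the $L^\infty$ bound and the (implied) $q=1$ condition, and that you make the dominated-convergence step explicit.
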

 \begin{proof}
First we have $u\sigma\in {\cal L}^{1,2}$ by Lemma \ref{utimessigma1}. By the definition of the operator $D^-$, it suffices to estimate $\mathbb {E}|\sigma_tD_su_t-\sigma_s(D^-u)_s|$ and $\mathbb{E}|u_tD_s\sigma_t-u_s(D^-\sigma)_s|$. For the first term, we have 
    \begin{equation*} 
\begin{aligned}
&\int_0^T\sup_{(s-\varepsilon)\vee 0\le t<s}\mathbb {E}|\sigma_tD_su_t-\sigma_s(D^-u)_s|\mathrm{d}s \\
\le &\int_0^T\sup_{(s-\varepsilon)\vee 0\le t<s}\mathbb {E}|\sigma_tD_su_t-\sigma_t(D^-u)_s|\mathrm{d}s+ \int_0^T\sup_{(s-\varepsilon)\vee 0\le t<s}\mathbb {E}|\sigma_t(D^-u)_s-\sigma_s(D^-u)_s|\mathrm{d}s  \\
\le & \left(\int_0^T\sup_{(s-\varepsilon)\vee 0\le t<s}\mathbb {E}|\sigma_t   |^2\mathrm{d}s\right)^{\frac{1}{2}}\left(\int_0^T\sup_{(s-\varepsilon)\vee 0\le t<s}\mathbb {E}|D_su_t -(D^-u)_s|^2\mathrm{d}s\right)^{\frac{1}{2}}\\
&+ \left(\int_0^T\sup_{(s-\varepsilon)\vee 0\le t<s}\mathbb {E}|\sigma_t -\sigma_s  |^2\mathrm{d}s\right)^{\frac{1}{2}}\left(\int_0^T \mathbb {E}|(D^-u)_s|^2\mathrm{d}s\right)^{\frac{1}{2}},
\end{aligned}
\end{equation*}
which converges to $0$ when $\varepsilon\rightarrow 0^+$ due to the fact that $u\in {\cal L}^{1,2,2-}$ and $\sigma$ is $L^2$-bounded and left-continuous. The convergence of the second term is in a similar way.
  \end{proof}
  
  \begin{remark}\label{remsigma}
  Notice that if an ${\cal F}_t$-adapted process $v\in {\cal L}^{1,2}$, then $v\in {\cal L}^{1,2,2-}$ with $(D^-v)_s=0$. Thus the condition `$\sigma \in {\cal L}^{1,2,2-}$ and $u$ is left-continuous in the norm $L^2(\Omega)$' in Lemma \ref{utimessigma2} can be replaced by `$\sigma$ is ${\cal F}_t$-adapted and belongs to ${\cal L}^{1,2}$'.
  \end{remark}

 \section{Proofs of main results}\label{Appp}
 \setcounter{equation}{0} 
\setcounter{theorem}{0}

\renewcommand\theequation{B.\arabic{equation}}
\renewcommand\thetheorem{B.\arabic{theorem}}
\renewcommand\theremark{B.\arabic{remark}}

\begin{proof}[Proof of Proposition \ref{multipf1}]
   By the multiplication formula (Lemma \ref{multidelta}) and the Fubini theorem (Exercise 3.2.7 in \cite{Nualart06}), we have 
    \begin{equation} \label{rel1}
\begin{aligned}
& \lim_{\varepsilon\rightarrow 0^+}{\varepsilon}^{-1}\int_0^t u_s(W_{(s+\varepsilon)\wedge T}-W_s)\mathrm{d}s 
\\=& \lim_{\varepsilon\rightarrow 0^+} \int_0^t  u_s\int_s^{(s+\varepsilon)\wedge T} \frac{1}{\varepsilon}\mathrm{d}W_r\mathrm{d}s \\
 =& \lim_{\varepsilon\rightarrow 0^+} \left\{\int_0^t  \int_s^{(s+\varepsilon)\wedge T} \frac{u_s}{\varepsilon}\mathrm{d}W_r\mathrm{d}s+\int_0^t  \int_s^{(s+\varepsilon)\wedge T} \frac{D_ru_s}{\varepsilon}\mathrm{d}r\mathrm{d}s\right\}\\
  =&\lim_{\varepsilon\rightarrow 0^+}\left\{ \int_0^T  \int_{(r-\varepsilon)\vee 0}^{r} \frac{u_s1_{[0,t]}(s)}{\varepsilon}\mathrm{d}s\mathrm{d}W_r +\int_0^T   \int_{(r-\varepsilon)\vee 0}^{r}  \frac{D_ru_s1_{[0,t]}(s)}{\varepsilon}\mathrm{d}s\mathrm{d}r \right\}.
\end{aligned}
\end{equation}

For the first term on the right side of (\ref{rel1}), we have $\lim_{\varepsilon\rightarrow 0^+}\int_{(r-\varepsilon)\vee 0}^{r} \frac{u_s1_{[0,t]}(s)}{\varepsilon}\mathrm{d}s =u_r1_{[0,t]}(r)$ in ${\cal L}^{1,2}$ by Lemma \ref{conv_1}. Then the convergence of the first term in $L^2(\Omega)$ follows from the boundedness of $ \delta$ (Lemma \ref{boundeddelta}).

For the second term, we have
    \begin{equation}\begin{aligned}
     &{\mathbb E}\left|\int_0^T   \int_{(r-\varepsilon)\vee 0}^{r}  \frac{D_ru_s1_{[0,t]}(s)}{\varepsilon}\mathrm{d}s\mathrm{d}r -\int_0^t(D^-u)_r \mathrm{d}r\right| \\ \le & {\mathbb E} \int_0^T   \int_{r-\varepsilon}^{r}  \frac{|D_ru_s1_{[(r-\varepsilon)\vee 0,r]}(s)-(D^-u)_r|}{\varepsilon}1_{[0,t]}(r)\mathrm{d}s\mathrm{d}r\\
     &+  {\mathbb E} \int_0^T \int_{(r-\varepsilon)\vee 0}^{r}\frac{|D_ru_s||1_{[0,t]}(r)-1_{[0,t]}(s)|}{\varepsilon } \mathrm{d}s\mathrm{d}r \\ \le &\int_0^t\sup_{ r-\varepsilon\le s<r}{\mathbb E}|D_ru_s1_{[(r-\varepsilon)\vee 0,r]}(s)-(D^-u)_r|\mathrm{d}r
     \\&+  \int_0^T\sup_{(r-\varepsilon)\vee0\le s<r}\mathbb{E}|D_ru_s|\int_{(r-\varepsilon)\vee0}^{r}\frac{|1_{[0,t]}(r)-1_{[0,t]}(s)|}{\varepsilon} \mathrm{d}s\mathrm{d}r  \\ \le &\int_0^t\sup_{ (r-\varepsilon)\vee 0\le s<r}{\mathbb E}|D_ru_s-(D^-u)_r|\mathrm{d}r+\int_0^{\varepsilon} {\mathbb E}|(D^-u)_r|\mathrm{d}r+  \int_t^{(t+\varepsilon)\wedge T} \sup_{(r-\varepsilon)\vee0\le s<r}\mathbb{E}|D_ru_s|   \mathrm{d}r.  
     \end{aligned}\end{equation}
  The above convergence in $L^1(\Omega)$ follows from the definition of ${\cal L}^{1,2,1-}$.
\end{proof}

   \begin{proof}[Proof of Proposition \ref{multipf2}]
It is an immediate consequence of Proposition \ref{multipf1}, Lemma \ref{utimessigma2} and Remark \ref{remsigma}.
   \end{proof}

   \begin{proof}[Proof of Theorem \ref{itofor}]
  By means of localization we can assume that the processes $f(X_t)$, $f'(X_t)$, $f''(X_t)$ and $\int_0^Tu_t^2\mathrm{d}t$ are uniformly bounded, $X_0\in D^{1,2}(\Omega)$, $u\in{\cal L}^{f}$ and $v\in{\cal L}^{1,2}$ (see \cite{Nualart06, Huang00}). By Proposition \ref{multipf1}, the process $X_t$ has the following decomposition
    \begin{equation*}  
\begin{aligned}
X_t=X_0+\int_0^tu_s\mathrm{d}W_s+\int_0^tv_s\mathrm{d}s+\int_0^t(D^-u)_s\mathrm{d}s.
     \end{aligned}
\end{equation*}
This process verifies the conditions of Theorem \ref{itos}. Consequently, we can apply It\^{o} formula to $X$ and obtain
    \begin{equation}
  \label{itoforequ1}  
\begin{aligned}
f(X_t)=&f(X_0)+\int_0^tf'(X_s)u_s\mathrm{d}W_s+\int_0^tf'(X_s)v_s\mathrm{d}s+\int_0^tf'(X_s)(D^-u)_s\mathrm{d}s\\
&+\frac{1}{2}\int_0^tf''(X_s)u_s^2\mathrm{d}s+\int_0^tf''(X_s)(D^-X)_su_s\mathrm{d}s.
     \end{aligned}
\end{equation}
The process $f'(X_t)$ belongs to ${\cal L}^{1,2,1-}$. In fact, notice first that as in the proof of Lemma \ref{utimessigma1} the boundedness of $f'(X_t)$, $f''(X_t)$ and $\int_0^Tu_t^2\mathrm{d}t$ and the fact that $u_t\in {\cal L}^{2,2}\cap {\cal L}^{1,4}$, $v, D^-u\in {\cal L}^{1,2}$ and $X_0\in D^{1,2}(\Omega)$ imply that this process belongs to ${\cal L}^{1,2}$ and 
    \begin{equation*}  
\begin{aligned}
D_s(f'(X_t)u_t)=f'(X_t)D_su_t+f''(X_t)D_sX_tu_t.
     \end{aligned}
\end{equation*}
On the other hand, as in the proof of Lemma \ref{utimessigma2}, using that $u\in {\cal L}^{1,2,1-}$, $u_t$ is left-continuous in $L^2(\Omega)$, $u$ is $L^2$-bounded, $X_t$ is continuous and $X\in {\cal L}^{1,2,2}$ (by Theorem \ref{itos}), we deduce that $f'(X_t)u_t\in {\cal L}^{1,2,1-}$ and that
    \begin{equation*}  
\begin{aligned}
(D^-(f'(X)u))_s=f'(X_s)(D^-u)_s+f''(X_s)(D^-X)_su_s.
     \end{aligned}
\end{equation*}
Substituting it into (\ref{itoforequ1}) and using Proposition \ref{multipf1} we obtain
  \begin{equation*}
\begin{aligned}
f(X_t)&=f(X_0)+\int_0^tf'(X_s)u_s\mathrm{d}W_s+\int_0^t (D^-(f'(X)u))_s\mathrm{d}s+\int_0^tf'(X_s)v_s\mathrm{d}s
+\frac{1}{2}\int_0^tf''(X_s)u_s^2\mathrm{d}s\\
&=f(X_0)+\int_0^tf'(X_s)u_s\mathrm{d}^-W_s +\int_0^tf'(X_s)v_s\mathrm{d}s
+\frac{1}{2}\int_0^tf''(X_s)u_s^2\mathrm{d}s.
     \end{aligned}
\end{equation*}
   \end{proof}

\end{appendices}

\end{CJK}
\end{document}